\documentclass[10pt]{amsart}
\usepackage{geometry}                
\geometry{a4paper}                   
\usepackage{graphicx}
\usepackage{epstopdf}
\usepackage{amsmath}
\usepackage{amsthm}
\usepackage{color}
\usepackage{amscd}
\usepackage{amsfonts}
\usepackage{epsfig}
\usepackage{amssymb,latexsym}
\usepackage{bm}

\def\e{\epsilon}
\def\espacio{H^2([0, 1]; \R^2)}
\def\espacioo{H^2_O([0, 1]; \R^2)}
\def\espacioof{H^1_O([0, 1]; \R^2)}

\def\a{\alpha}
\def\bbF{\mathbb F}
\def\bJ{\mathbf J}
\def\C{\mathcal C}
\def\bbO{\mathbb O}
\def\bbI{\mathbb I}
\def\bbU{\mathbb U}
\def\bbH{\mathbb H}

\def\bbL{\mathbb L}
\def\bW{\mathbf{W}}
\def\X{\mathbb{X}}
\def\Y{\mathbb{Y}}
\def\V{\mathbb{V}}

\def\R{\mathbb{R}}
\def\bR{\mathbf{R}}

\def\X{\mathbb{X}}
\def\F{\mathbb{F}}

\def\N{\mathbb{N}}

\def\H{\mathbb{H}}
\def\cC{\mathcal{C}}

\def\G{\mathbb{G}}

\def\bP{\mathbf{P}}
\def\bP{\mathbf{P}}
\def\bbP{\mathbb{P}}

\def\bF{\mathbf{F}}
\def\bG{\mathbf{G}}
\def\bK{\mathbf{K}}

\def\bbT{\mathbb{T}}

\def\L{\mathbb{L}}

\def\bx{\mathbf{x}}

\def\bX{\mathbf{X}}

\def\by{\mathbf{y}}
\def\bz{\mathbf{z}}
\def\bQ{\mathbf{Q}}

\def\bbS{\mathbb{S}}
\def\bbC{\mathbb{C}}

\def\bn{\mathbf{n}}

\def\bu{\mathbf{u}}
\def\bU{\mathbf{U}}

\def\bv{\mathbf{v}}
\def\bV{\mathbf{V}}
\def\bw{\mathbf{w}}

\def\br{\mathbf{r}}
\def\bz{\mathbf{z}}
\def\bZ{\mathbf{Z}}
\def\bc{\mathbf{0}}
\def\bcero{\mathbf{0}}
\def\id{\mathbf{1}}
\def\bp{\mathbf{p}}
\def\bq{\mathbf{q}}
\def\bH{\mathbf{H}}

\newcommand{\dv}{\operatorname{div}}

\newcommand{\supp}{\operatorname{supp}}

\newtheorem{theorem}{Theorem}[section]
\newtheorem{corollary}[theorem]{Corollary}
\newtheorem{definition}{Definition}[section]
\newtheorem{lemma}[theorem]{Lemma} 
\newtheorem{proposition}[theorem]{Proposition}
\newtheorem{problem}[theorem]{Problem}

\newcommand{\Dv}{\operatorname{Div}}

\def\caja#1{\bigskip
\begin{center}
\fbox{
\begin{minipage}{0.9\textwidth}
\smallskip
\begin{center}
\textbf{\underline{Hilbert's 16th problem}}
\end{center}
\smallskip
#1
\smallskip
\end{minipage}
}
\end{center}
\bigskip}

\numberwithin{equation}{section}

\title[Hilbert's 16th problem]{A variational approach to Hilbert's 16th problem within the framework of global analysis}
\author{Pablo Pedregal}
\address{Universidad de Castilla-La Mancha}
\curraddr{}
\email{pablo.pedregal@uclm.es}
\thanks{Departamento de Matemáticas, Universidad de Castilla-La Mancha, 13071 Ciudad Real, SPAIN. Supported by grants PID2023-151823NB-I00, and  SBPLY/23/180225/000023}
\subjclass[2020]{58E05, 58E30}

\keywords{Morse inequalities, invariant sets, Euler-Lagrange system, multiplicity}

\date{}

\begin{document}
\maketitle
\begin{abstract}
We focus on the second part of Hilbert's 16th problem and provide an upper bound on the number of limit cycles that a polynomial, differential, planar system may have, depending exclusively on the degree $n$ of the system. Such a bound turns out to be a polynomial of degree $4$ in $n$. More specifically, if $H(n)$ indicates the maximum number of limit cycles among planar, differential, polynomial systems of degree $n$, then
\begin{gather}
H(n)\le \dfrac52 n^4-\dfrac{23}2 n^3+ \dfrac{43}2n^2-\dfrac{37}2n+7\,\,\,\,
\mbox{if $n$ is even, and} \nonumber\\
 H(n)\le \dfrac52 n^4-\dfrac{23}2 n^3+ \dfrac{41}2n^2-\dfrac{33}2n+6\,\,\,\,
\mbox{if $n$ is odd}.\nonumber
\end{gather}
For quadratic systems, we find $H(2)=4$. Our proof is entirely variational and utilizes in a fundamental way tools and facts from global analysis to the point that no particular expertise in dynamical systems is necessary or required. 
\end{abstract}
\tableofcontents


\section{Introduction and main result}
This paper deals with planar differential systems of the form
\begin{equation}\label{e1}
x'(t)=P(x(t), y(t)),\quad y'(t)=Q(x(t), y(t)).
\end{equation}
As soon as the two functions
$$
P(x, y), Q(x, y):\R^2\to\R
$$
are smooth, there is a unique smooth integral curve of the system passing through every point in $\R^2$. This is a standard result that is studied in every first course in ODEs. Equilibria, i.e. points $(x_0, y_0)\in\R^2$ where both $P$ and $Q$ simultaneously vanish, play a central role in the overall dynamics of such a system. Periodic solutions too deserve a special place concerning the global dynamics of the system.
\begin{definition}
Every periodic solution of \eqref{e1} that is isolated from other such solutions is called a limit cycle of the system. 
\end{definition} 
We restrict attention in this paper to the case of planar, polynomial, differential systems for which the maximum of the degrees of 
both polynomials $P(x, y)$ and $Q(x, y)$ is $n$. 

The problem we would like to address is the second part of Hilbert's 16th problem 
\cite{Hi}, which in the version due to Smale \cite{Sm}, can be formulated as follows.
\begin{quote}
Consider the polynomial differential system \eqref{e1} in
$\R^2$. Is there a bound K on the number of limit cycles of the form
$K\leq n^q$ where $n$ is the maximum of the degrees of $P$ and $Q$,
and $q$ is a universal constant\,?
\end{quote}

There have been some crucial contributions towards the solution of this problem. Some of them are indicated below at the end of this Introduction. 
As our approach has essentially nothing in common with those, we simply mention some additional ones here without further comment: \cite{Bam}, \cite{CW}, \cite{Du}, \cite{Ec}, \cite{I0}, \cite{Il}, \cite{PL1}, \cite{PL2}, \cite{Po1}, \cite{Sh}, \cite{So}. There are also some important articles dealing with lower bounds for the number of limit cycles \cite{CL1}, \cite{HL}, \cite{Li}, as well as some other relevant results concerning this problem restricted to algebraic limit cycles \cite{Lli}, \cite{LRS},  \cite{LRS2}, \cite{XZ} among others; or about the possible configurations of those limit cycles \cite{GLV}, \cite{LR}, \cite{SS}, \cite{Sv}, \cite{Wi}. There is such an abundance of relevant papers, that, to avoid dispersion, we have only mentioned those more familiar to the author in the final bibliographic section. 

Our aim is the complete proof of the following central theorem.

\begin{theorem}\label{principall}
Consider the polynomial differential system \eqref{e1} of degree $n>1$. Assume that:
\begin{enumerate}
\item $P$ and $Q$ have no common non-trivial factor.
\item All of the connected components of the algebraic curve 
$$
P_x+Q_y=0,
$$
are homeomorphic to a straight line or to an oval (i.e. such
curve has no singular points), and that there are 
$M$ of such components.
\item The polynomial system
\begin{equation}\label{salto0}
\begin{array}{r}
P(P_{xx}+Q_{yx})+Q(P_{xy}+Q_{yy})=0,\\
P_x+Q_y=0,
\end{array}
\end{equation}
only has $N$ simple solutions
(i.e. $N$ simple contact points of the vector field $(P,Q)$ with the curve
$P_x+Q_y=0$).
\end{enumerate}
Then an upper bound for the number $H(n)$ of limit cycles  that such a 
differential system \eqref{e1} may have is
\begin{equation}\label{cotaprin}
H(n)\le 1+(n-1)^2(M+N).
\end{equation}
\end{theorem}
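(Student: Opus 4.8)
The plan is to realize limit cycles as the global minimizers of a smooth, non-negative functional on a Hilbert manifold of closed curves, and then to count them through the associated Euler-Lagrange system. First I would parametrize candidate periodic orbits as maps $\bu\in\espacio$ satisfying periodicity $\bu(0)=\bu(1)$, $\bu'(0)=\bu'(1)$, and observe that the image of $\bu$ is a (reparametrized) trajectory of \eqref{e1} precisely when its velocity is everywhere aligned with the field $\F=(P,Q)$. This suggests the functional
\begin{equation*}
E(\bu)=\frac12\int_0^1\big(\bu_1'(s)\,Q(\bu(s))-\bu_2'(s)\,P(\bu(s))\big)^2\,ds,
\end{equation*}
which is smooth on $\espacio$, non-negative, and vanishes exactly on closed orbits; thus the limit cycles sit among its zero-level global minimizers.

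Since $E$ is invariant under reparametrization and under the shift $s\mapsto s+c$, its critical set is highly degenerate. To break these symmetries and recover compactness I would pass to the normalized spaces $\espacioo$ and $\espacioou$ (fixing the base point and the parametrization), and regularize by a singular perturbation
\begin{equation*}
E_\e(\bu)=E(\bu)+\e\int_0^1|\bu''(s)|^2\,ds,\qquad \e>0 .
\end{equation*}
The perturbation restores coercivity and the Palais-Smale condition, makes $E_\e$ (generically) a Morse functional whose critical points solve an Euler-Lagrange optimality system, and one then studies the asymptotics $\e\to0$, showing that the minimizers of $E_\e$ converge to limit cycles and that no critical point escapes to the boundary of the configuration space.

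With the Morse structure in place, I would bound the number of limit cycles by the number of index-$0$ critical points of $E_\e$ and control the whole critical set through the geometry of the divergence curve $P_x+Q_y=0$. The decisive input is that a critical configuration must meet this curve: Bendixson's criterion forces any closed orbit to enclose a sign change of $P_x+Q_y$, so the orbit interacts with the $M$ components of the curve through the $N$ contact points described by \eqref{salto0}. Counting these interactions algebraically yields the factor $(n-1)^2$ by B\'ezout applied to the degree-$(n-1)$ equation $P_x+Q_y=0$ against the orbit-defining relations, the factor $M+N$ from the components and contacts, and the additive $1$ from the connected baseline (the Euler characteristic contribution of the ambient component). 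The Morse inequalities are what turn this combinatorial-algebraic count of critical points into the asserted bound \eqref{cotaprin} on the global minimizers.

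The hard part will be this last step. Morse inequalities naturally bound critical points from below by Betti numbers, so extracting an \emph{upper} bound on the minimizers requires simultaneously bounding the higher-index critical points and proving the algebraic count is exhaustive. Equally delicate is the singular-perturbation limit: one must show, uniformly in $\e$, that minimizers track limit cycles without collision, that $E_\e$ is nondegenerate, and that the $\e\to0$ asymptotics neither create nor destroy critical points, so that the finite count obtained at fixed small $\e$ transfers faithfully to the original system \eqref{e1}.
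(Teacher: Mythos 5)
Your overall framework is exactly the paper's: the same functional $E_0(\bu)=\frac12\int_0^1(\F^\perp(\bu)\cdot\bu')^2\,dt$ whose zeros are the closed orbits, the same passage to periodic $H^2$ paths with a singular perturbation $\frac{\e}{2}\|\bu\|^2_{H^2}$ (the paper additionally adds a linear term $\langle\bv_\e,\bu\rangle$ with $\bv_\e$ supplied by the Sard--Smale theorem precisely to make all critical points non-degenerate; ``generically a Morse functional'' is not enough, since the theorem is asserted for the given system, not for a generic one), and the same use of Morse inequalities in the form $\sharp\{E_\e\le a_\e\}\le 1+\C(\{a_\e<E_\e\le b_\e\})$ with each limit cycle occupying its own connected component of the low sublevel set.

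The gap is in the only part that actually produces the bound. You attribute the factor $(n-1)^2$ to B\'ezout applied to the degree-$(n-1)$ curve $P_x+Q_y=0$ against ``orbit-defining relations'', and the factor $M+N$ to a Bendixson-type statement about the limit cycles themselves. Neither is how the count works, and neither can be made to work as stated: B\'ezout between $\Dv=0$ and the contact system \eqref{salto0} is what bounds $N$ by $2(n-1)^2$ in the passage from Theorem \ref{principall} to Theorem \ref{t5}, and has nothing to do with the multiplicity factor inside \eqref{cotaprin}. In the paper the divergence curve enters through the critical paths that are \emph{not} minimizers: writing the Euler--Lagrange system as $(Z^2)'=O(\e)$, $Z^2\Dv=O(\e)$ with $Z=\F^\perp(\bu)\cdot\bu'$, one shows that any branch of critical paths with $E_0$ bounded away from zero must concentrate, as $\e\to0$, on an arc of $\{\Dv=0\}$ whose endpoints are contact points (where the tangent direction is forced to reverse); counting such arcs over the $M$ components and $N$ contact points gives the at most $M+N$ distinct asymptotic behaviors. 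The factor $(n-1)^2$ then comes from a separate two-step bifurcation analysis: for a fixed asymptotic limit one perturbs the end-point data $(\by,\bz)$ of the associated boundary-value problem, and the condition that a perturbed path remain critical reduces, after rescaling, to the vanishing of a limit polynomial of degree at most $n-1$ in the perturbation parameter --- the degree $n-1$ arising because that parameter enters only through $\Dv$, which has degree $n-1$ --- once in $\by$ and once in $\bz$, hence at most $(n-1)\times(n-1)$ branches per asymptotic limit. Without these two mechanisms your outline reproduces the strategy announced in the abstract but not the theorem; the difficulties you flag at the end (extracting upper rather than lower bounds from Morse theory, uniformity in $\e$) are precisely the ones the paper spends Sections \ref{lcv}--\ref{mult} resolving.
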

Though this theorem can be used to find upper bounds for the number of limit cycles of special families of differential systems, or even for individual ones, 
upper bound \eqref{cotaprin} can serve, together with the classic results of Bezout and Harnack to relate parameters $M$ and $N$ to $n$, to establish an upper bound for the number of limit cycles of such regular, planar, polynomial, differential system of degree $n$ only in terms of $n$. 
On the other hand, the important hypotheses assumed in this statement are generic, in the sense that if they do not hold for a particular planar polynomial system, a small perturbation of $P$ and $Q$, without changing their degree, permits to have them.  A standard genericity argument then may be utilized to extend bound \eqref{cotaprin} in Theorem \ref{principall} to arbitrary systems of degree $n$, not complying with such conditions, in order to show the validity of the following general explicit upper bound furnishing an answer to Hilbert's 16th problem (\cite{Hi}) and Smale's 13th problem (\cite{Sm}). 

\begin{theorem}\label{t5}
An upper bound for the maximum number $H(n)$ of limit cycles that a planar
polynomial differential system of degree $n>1$ can have is
\begin{gather}
H(n)\le \dfrac52 n^4-\dfrac{23}2 n^3+ \dfrac{43}2n^2-\dfrac{37}2n+7\,\,\,\,
\mbox{if $n$ is even, and} \nonumber\\
 H(n)\le \dfrac52 n^4-\dfrac{23}2 n^3+ \dfrac{41}2n^2-\dfrac{33}2n+6\,\,\,\,
\mbox{if $n$ is odd}.\nonumber
\end{gather}
\end{theorem}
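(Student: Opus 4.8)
The plan is to derive the explicit bound in Theorem \ref{t5} from the structural bound \eqref{cotaprin} of Theorem \ref{principall} by estimating the geometric quantities $M$ and $N$ purely in terms of the degree $n$. The starting point is the observation that the divergence curve $P_x+Q_y=0$ is a real algebraic curve of degree at most $n-1$, since differentiating either $P$ or $Q$ lowers the degree by one. Thus I would first apply \emph{Harnack's curve theorem}, which bounds the number of connected components (ovals plus unbounded branches) of a nonsingular real plane algebraic curve of degree $d$ by $\tfrac{(d-1)(d-2)}{2}+1$, to the divergence curve with $d=n-1$. This yields an explicit bound $M\le \tfrac{(n-2)(n-3)}{2}+1$, which is $O(n^2)$.

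Next I would bound $N$, the number of contact points, using \emph{B\'ezout's theorem}. The contact condition \eqref{salto0} is an intersection of two algebraic curves: the divergence curve $P_x+Q_y=0$ of degree $n-1$, and the curve $P(P_{xx}+Q_{yx})+Q(P_{xy}+Q_{yy})=0$. The degree of this second curve is the key computation: each term is a product of $P$ (or $Q$), of degree $n$, with a second partial derivative of $P$ or $Q$, of degree $n-2$, giving total degree at most $2n-2$. By B\'ezout, the number of isolated intersection points of two curves of degrees $n-1$ and $2n-2$ is at most their product, $(n-1)(2n-2)=2(n-1)^2$, so $N\le 2(n-1)^2$. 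Substituting both estimates into \eqref{cotaprin} gives
\begin{equation}
H(n)\le 1+(n-1)^2\left(\frac{(n-2)(n-3)}{2}+1+2(n-1)^2\right),\nonumber
\end{equation}
which, after expanding, is a degree-four polynomial in $n$ with leading coefficient $\tfrac{1}{2}+2=\tfrac{5}{2}$, matching the claimed leading term.

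The remaining work is twofold. First, to obtain the exact lower-order coefficients and the even/odd split, I would sharpen Harnack's bound using parity refinements: for a curve of odd degree the count of noncompact branches behaves differently from the even case, and one must track whether the divergence curve (of degree $n-1$, whose parity is opposite to that of $n$) contributes an unbounded component. The discrepancy between the two stated polynomials — namely $2n^2-4n+1$ versus the corresponding odd-case terms — should arise precisely from this parity bookkeeping in Harnack together with a possible B\'ezout correction when the two curves share a branch at infinity. Second, because Theorems \ref{principall} imposes the \emph{nonsingularity} hypothesis (i) and the \emph{simplicity} hypothesis (ii), I would invoke a \emph{genericity argument}: the systems satisfying both conditions are dense (they form the complement of a proper algebraic subvariety in coefficient space), and the number of limit cycles cannot jump up under small perturbation, so the bound valid on a dense set extends by semicontinuity to all systems of degree $n$.

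The main obstacle I anticipate is the passage from the generic case to the fully general one. Limit cycles can be created or destroyed under perturbation (Hopf bifurcations, homoclinic loops, and especially the splitting of a multiple limit cycle into several simple ones), so the claim that the count does not increase in the limit is not automatic and requires a genuine upper-semicontinuity statement for the limit-cycle count under $C^1$-small polynomial perturbations preserving the degree. Establishing this — most plausibly by appealing to the variational/Morse-theoretic framework announced in the abstract, where limit cycles are global minimizers of a non-negative functional and their count is controlled by Morse inequalities that are stable under perturbation — is the delicate step; by comparison, the Harnack and B\'ezout estimates are routine once the degrees of the two defining curves are correctly identified.
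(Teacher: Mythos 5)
Your proposal follows essentially the same route as the paper: Harnack's theorem applied to the degree-$(n-1)$ divergence curve gives $M\le \tfrac12(n-2)(n-3)+1$, B\'ezout applied to the curves of degrees $n-1$ and $2n-2$ gives $N\le 2(n-1)^2$, these are substituted into \eqref{cotaprin}, and the non-generic case is handled by a separate perturbation argument (the paper's Section \ref{nongen}, via epi-convergence of sublevel sets of the perturbed functionals, so that connected components cannot be created in the limit). The only minor imprecision is that the even/odd discrepancy of $(n-1)^2$ comes solely from the parity case in Harnack's bound for the degree-$(n-1)$ curve; no B\'ezout correction at infinity is involved.
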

The number $H(n)$ is usually called the {\it Hilbert number} for 
polynomial differential systems of degree $n$.
This upper bound for $H(n)$ yields a universal exponent $q=4$ for the above version due to Smale. 

Part of our job is to
understand the role played by the following three pieces of
information:
\begin{itemize}
\item the divergence curve 
$$
\Dv=\Dv(x, y)\equiv P_x(x, y)+Q_y(x, y)=0;
$$ 
\item the set of contact points of the vector field $(P,Q)$ with the curve
$\Dv=0$ which are the solutions of system \eqref{salto0};
and
\item the role played by the factor $(n-1)^2$ multiplying the sum $M+N$ in \eqref{cotaprin}, and where it comes from.
\end{itemize}

\subsection{Main corollaries of central theorem}\label{hilb}
It is a well-established fact that in counting limit cycles for planar, differential systems, the two components $P$ and $Q$ can be assumed not to have non-trivial common factors. 
It is also well-known that, under small perturbations in the coefficients
of $P$ and $Q$, the components of the algebraic curve $\Dv=0$ become
homeomorphic either to a straight line or to an oval, and that the
number of contact points, understood as solutions of system  \eqref{salto0}, of the vector field $(P,Q)$ with the curve
$\Dv=0$, is finite, and they all are simple. We refer to this case as the generic situation.
Given that the bound for the generic case is uniform on the degree $n$ of the system, showing that such an upper bound for the
number of limit cycles of a polynomial differential system of degree
$n$ extends to a non--generic vector field of the same degree $n$ is not a big deal. Such perturbation argument will be described in the final section. As indicated, it enables to extend bounds on generic differential systems of a certain degree to general systems of the same degree. Because of this remark, we will always take for granted those generic assumptions, knowing that they extend without trouble to the general situation. 

Our principal, fundamental concern is to prove Theorem \ref{principall}. If one assumes that it has been shown, then
Theorem \ref{t5} in the generic case is easy to prove. 
This amounts to calculating the value of the two parameters $M$ and $N$ in this result in terms of the degree $n$ of the system. 

To do so, we recall two
classical theorems:

\begin{theorem} [Bezout Theorem, \cite{Fu}]\label{Be}
Let $R(x, y)$ and $S(x, y)$ be two polynomials with coefficients in
$\R$. If both polynomials do not share a non-trivial common factor,
then the algebraic system of equations
$$
R(x, y)=S(x, y)=0
$$
has at most degree$(R)$degree$(S)$ solutions.
\end{theorem}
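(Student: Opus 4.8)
The statement is the affine, real form of Bezout's theorem, and the route I would take is elimination by resultants together with a general-position normalization. Write $m=\deg R$ and $n=\deg S$. Since every real solution of $R=S=0$ is in particular a complex solution, it suffices to bound the number of common zeros in $\bbC^2$, so I would work over $\bbC$ throughout and deduce the real bound a fortiori.

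The first step is to establish that the common zero locus is finite. Viewing $R$ and $S$ as polynomials in $y$ with coefficients in $\bbC[x]$, the resultant $\operatorname{Res}_y(R,S)\in\bbC[x]$ vanishes identically exactly when $R$ and $S$ share a factor of positive degree in $y$; because $R$ and $S$ have no non-trivial common factor, Gauss's lemma rules this out, so $\operatorname{Res}_y(R,S)$ is a nonzero polynomial. Consequently only finitely many values of $x$ can occur among the common zeros, and symmetrically, using $\operatorname{Res}_x(R,S)$, only finitely many values of $y$; hence the solution set is finite.

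Next I would place the system in general position. Homogenizing $R$ and $S$ to curves $\bar R,\bar S\subset\bbP^2$ and applying a generic linear change of coordinates, I can arrange simultaneously that (a) the point at infinity $[0:1:0]$ lies on neither curve, equivalently that the coefficients of $y^m$ in $R$ and of $y^n$ in $S$ are nonzero constants, and (b) no two of the finitely many solution points share the same $x$-coordinate. Each condition excludes only finitely many directions, so a suitable change exists. Under normalization (a) the entries of the Sylvester matrix obey the expected degree bounds in $x$, and the classical weighted-degree estimate gives $\deg_x\operatorname{Res}_y(R,S)\le mn$, while the polynomial remains nonzero by the previous step.

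Finally I would count: each common solution $(x_0,y_0)$ forces $R(x_0,\cdot)$ and $S(x_0,\cdot)$ to share the root $y_0$, so $x_0$ is a root of $\operatorname{Res}_y(R,S)$, and by normalization (b) distinct solutions give distinct values $x_0$; therefore the number of solutions is at most the number of roots of $\operatorname{Res}_y(R,S)$, which is at most $mn=\deg R\,\deg S$. The main obstacle is the degree bookkeeping in the third step: one must verify that the general-position normalization genuinely forces $\deg_x\operatorname{Res}_y(R,S)\le mn$ and that projection from $[0:1:0]$ separates the solution points. This is precisely the content cleanly organized by the projective reformulation, and it is where all the care must go, the finiteness and the final count being routine once it is in place.
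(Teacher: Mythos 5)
Your proposal is essentially correct, but note that the paper offers no proof of this statement at all: Theorem \ref{Be} is quoted as a classical fact with a pointer to Fulton \cite{Fu}, where it is derived from the projective Bezout theorem via local intersection multiplicities. Your resultant--elimination argument is therefore a genuinely different, and more elementary, route. It delivers exactly what the paper uses---an upper bound $\deg(R)\deg(S)$ on the number of \emph{distinct} common zeros, with no multiplicity bookkeeping---at the price of the general-position normalization and the weighted-degree estimate for the Sylvester determinant, which you rightly identify as the only step needing real care; the projective route buys the sharper statement that the multiplicities sum to exactly $\deg(R)\deg(S)$, but requires developing intersection numbers. Two small points to keep straight if you write this up in full: the criterion ``$\operatorname{Res}_y(R,S)\equiv 0$ if and only if $R$ and $S$ share a factor of positive degree in $y$'' presupposes that the resultant is formed with respect to positive $y$-degrees with the leading coefficients handled correctly, so the finiteness step is cleanest if carried out \emph{after} normalization (a), which makes the $y$-leading coefficients nonzero constants (the degenerate cases $\deg R=0$ or $\deg S=0$ being trivial); and one should state explicitly that an invertible linear change of variables preserves total degrees, preserves the absence of a non-trivial common factor, and bijects the solution sets, so that proving the bound in the normalized coordinates suffices. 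With those caveats your sketch is a complete and standard proof of the statement as used here.
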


\begin{theorem}[Harnack Theorem, \cite{Gu}]\label{Harnack}
The maximum number of connected components of an algebraic curve of
degree $k$ is
\begin{itemize}
\item[(a)] $1+(k-1)(k-2)/2$ if $k$ is even,

\item[(b)] $(k-1)(k-2)/2$ if $k$ is odd.
\end{itemize}
\end{theorem}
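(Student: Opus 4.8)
The plan is to deduce Harnack's bound from Bézout's Theorem~\ref{Be} together with an elementary mod~$2$ crossing-parity argument, which is exactly why the two classical results are recalled side by side. First I would pass to the projective closure, obtaining the nonsingular real projective curve $\bar C\subset\mathbb{RP}^2$ of degree $k$ (nonsingularity is the relevant case, and is guaranteed in the application by hypothesis (i) of Theorem~\ref{principall}); its complexification is a connected, hence irreducible, smooth complex curve, so $\bar C$ shares no component with any curve of degree $<k$, and Bézout's Theorem gives the fundamental ceiling $|\bar C\cap D|\le k(k-2)$ for every real curve $D$ of degree $k-2$. The real locus of $\bar C$ is a disjoint union of smooth circles, each of which is either an \emph{oval} (null-homologous in $H_1(\mathbb{RP}^2;\mathbb{Z}/2)$, i.e. a bounded branch of the affine picture) or a \emph{pseudoline}; the number of pseudolines has the parity of $k$, so for $k$ even every component is an oval while for $k$ odd there is at least one pseudoline. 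It therefore suffices to bound the number of ovals, and the one forced pseudoline in the odd case accounts for the gap of one between the two formulas. (The quantity $g=\tfrac12(k-1)(k-2)$ is of course the genus of a smooth plane curve of degree $k$, which is the topological source of the bound.)

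The core is a counting argument by contradiction. Suppose $\bar C$ carries too many ovals and single out a distinguished oval $\gamma_0$. Because a curve of degree $k-2$ is determined by $N=\tfrac12(k-2)(k+1)=\tfrac12(k-1)(k-2)+(k-2)$ point conditions, through any $N$ prescribed points there exists at least one such curve $D$. I would place an \emph{odd} number of points on $\gamma_0$ and a single point on each of the remaining selected ovals, exhausting the budget $N$. The key mechanism is that, since each oval $\gamma$ is null-homologous mod~$2$, its transverse intersection number with the $1$-cycle $D$ is \emph{even}: a single prescribed point on an oval forces at least two crossings, and an odd prescribed count on $\gamma_0$ is bumped up to the next even integer. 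Tallying these forced crossings makes $|\bar C\cap D|$ strictly exceed $k(k-2)$, contradicting Bézout.

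The bookkeeping is what splits the two parities and is also where the gain over the naive estimate comes from. For $k$ odd I would use $g+1$ ovals, placing the $k-2$ (odd) points on $\gamma_0$ and one point on each of the other $g$ ovals; the parity bump on $\gamma_0$ yields at least $(k-1)+2g=(k-1)^2=k(k-2)+1$ crossings, already a contradiction, so there are at most $g$ ovals, which is part~(b). For $k$ even, $k-2$ is even and the bump on $\gamma_0$ is lost, so I would instead assume $g+2$ ovals and redistribute: $k-3$ (odd) points on $\gamma_0$ and one point on each of the other $g+1$ ovals; now $\gamma_0$ contributes at least $k-2$ and the remaining ovals at least $2(g+1)$, for a total of at least $2g+k=k(k-2)+2$ crossings, again contradicting Bézout and forcing at most $g+1$ ovals, which is part~(a).

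The hard part, and the only place genuine care is needed, is justifying that every prescribed intersection is a \emph{transverse crossing} rather than a tangency or a higher-order contact, so that the mod~$2$ parity count is legitimate; this is handled by choosing the $N$ points generically on their ovals, which makes the auxiliary curve $D$ meet $\bar C$ transversally at them. The low-degree cases $k=1,2,3$, where the formula for $N$ or the point distribution degenerates, are checked by hand (a line has no oval, a smooth conic is a single oval, a smooth cubic has at most one oval). Assembling these pieces gives exactly the stated bounds, and re-expressing ``number of ovals'' as ``number of bounded connected components'' of the affine curve yields the form used in Theorem~\ref{principall}.
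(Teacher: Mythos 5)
The paper offers no proof of this statement at all: it is quoted as a classical fact with a pointer to Gudkov's survey \cite{Gu}, so there is no internal argument to compare yours against. Your route is the standard classical one (B\'ezout against an auxiliary curve of degree $k-2$ through $\tfrac12(k-2)(k+1)$ prescribed points, combined with the mod-$2$ intersection parity of null-homologous ovals), and your even-degree bookkeeping is correct: assuming $g+2$ ovals with $g=\tfrac12(k-1)(k-2)$ forces at least $k(k-2)+2$ intersections, so at most $g+1$ components.

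There is, however, a genuine gap in the odd case, located exactly where you appeal to ``the gap of one between the two formulas.'' Your contradiction argument shows that for $k$ odd there are at most $g$ \emph{ovals}; but the forced pseudoline is itself a connected component, so what you have actually proved is that the total number of components is at most $g+1$ --- the same bound as in the even case --- not the $g$ asserted in part (b). The pseudoline adds one to the component count; it does not absorb one. This is not a repairable slip in bookkeeping: part (b) as stated is false for nonsingular projective curves (the smooth cubic $y^2z=x^3-xz^2$ has two real components, an oval and a pseudoline, whereas (b) would allow only one), and the classical Harnack inequality that your argument correctly establishes is the uniform bound $1+\tfrac12(k-1)(k-2)$ for every degree. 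A further caution for the intended application: the theorem is used in the paper to bound the number of components of the \emph{affine} curve $P_x+Q_y=0$ in $\R^2$, and the projective component count does not control the affine one (a hyperbola is a single projective oval but has two affine branches), so an additional step relating affine and projective components would be needed before either version of the bound could be used as in the proof of Theorem \ref{t5}.
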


\begin{proof}[Proof of Theorem \ref{t5} based on 
Theorem \ref{principall}] 
We need to find an upper bound for the
number $N$ of the solutions that system (\ref{salto0}) may have when
$P$ and $Q$ are polynomials of at most degree $n$. By Bezout's
theorem we have that 
$$
N\le 2(n-1)^2,
$$ 
because in the generic case we
discard the possibility that the two equations of system
\eqref{salto0} may have a non--trivial common factor. Note that the degree of the first equation in system \eqref{salto0} is $2n-2$ while that of the second one is $n-1$.

By Theorem \ref{Harnack} the number $M$ of components of $\Dv=0$
satisfies  
$$
M\le
\frac12(n-2)(n-3)+1,
$$ 
if $n$ is even, and 
$$
M\le \frac12(n-2)(n-3),
$$
if $n$ is odd. Note that $k=n-1$. 

The final number in the statement of the theorem is then a direct
consequence of Theorem \ref{principall}, i.e. 
\[
\begin{array}{rl}
1+(n-1)^2(N+M) \le& 1+(n-1)^2\left(\dfrac12(n-2)(n-3)+1+2(n-1)^2\right)
\vspace{0.2cm}\\
=& \dfrac52 n^4-\dfrac{23}2 n^3+ \dfrac{43}2n^2-\dfrac{37}2n+7,
\end{array}
\]
if $n$ is even, while
\[
\begin{array}{rl}
1+(n-1)^2(N+M) \le& 1+(n-1)^2\left(\dfrac12(n-2)(n-3)+2(n-1)^2\right)
\vspace{0.2cm}\\
=& \dfrac52 n^4-\dfrac{23}2 n^3+ \dfrac{41}2n^2-\dfrac{33}2n+6,
\end{array}
\]
if $n$ is odd. This yields the numbers in the statement of Theorem
\ref{t5} in the generic case.
\end{proof}

Two corollaries of Theorem \ref{principall} are worth stating. The first refers to quadratic differential systems which have attracted a lot of attention throughout the years as a training ground for novel ideas and techniques (\cite{Bam}, \cite{duelrou}, \cite{durourou}, \cite{rou}, \cite{rouzhu}, \cite{Sh}, \cite{smits}).

\begin{corollary}
If the divergence of a quadratic polynomial differential system
\eqref{e1} is constant or zero, then it has no limit cycles.
Otherwise if the straight line $\Dv=0$ of a quadratic polynomial
differential system \eqref{e1} has:
\begin{itemize}
\item[(a)] either one or two contact points, then it cannot have more than  $4$
limit cycles.
\item[(b)] no contact points, then it has no limit cycles.
\end{itemize}
At any rate, $H(2)=4$.
\end{corollary}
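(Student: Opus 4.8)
The plan is to specialize the two parameters $M$ and $N$ of Theorem \ref{principall} to $n=2$, run the resulting case analysis, and reinforce the two borderline situations---where that theorem is not by itself sharp---with a direct Bendixson-type argument. The starting observation is that for a quadratic system $\Dv=P_x+Q_y$ is affine, so its second derivatives are constants; consequently the first equation of \eqref{salto0} is a polynomial of degree at most two and the second is linear.

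I would first dispose of the degenerate case. For any limit cycle $\gamma$ bounding a region $D$, tangency of $(P,Q)$ to $\gamma$ together with the divergence theorem gives $\iint_D \Dv\,dx\,dy=0$. If $\Dv$ is a nonzero constant this is impossible, so no limit cycle exists; if $\Dv\equiv 0$ the field is divergence-free, hence globally Hamiltonian on $\R^2$, its orbits lie on the level sets of a stream function and are never isolated, so again there are no limit cycles. This settles the first sentence of the statement.

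Assume now that $\Dv$ is non-constant, so that $\Dv=0$ is a single straight line and $M=1$, while $(n-1)^2=1$. Bezout's Theorem \ref{Be}, applied to the degree-two and degree-one equations of \eqref{salto0} (with no nontrivial common factor, as in the generic setting), gives $N\le 2$. Plugging $M=1$ into \eqref{cotaprin} yields $H(2)\le 2+N$, hence $H(2)\le 4$ for $N\in\{1,2\}$: this is case (a). For $N=0$ the bound \eqref{cotaprin} only gives $H(2)\le 2$, so a sharper argument is required. Here I would use that the first equation of \eqref{salto0} is precisely $\tfrac{d}{dt}\Dv=(P_{xx}+Q_{yx})P+(P_{xy}+Q_{yy})Q$, the derivative of the divergence along the flow. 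Absence of contact points means this expression never vanishes on the connected line $\Dv=0$, hence has constant sign there, so every trajectory crosses the line in one and the same direction. But a limit cycle must cross the line, since $\iint_D\Dv\,dx\,dy=0$ with $\Dv$ affine and non-constant forces $D$ to straddle it; and a closed curve crossing a line transversally must do so with alternating orientation. This contradiction proves (b).

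Finally, to upgrade $H(2)\le 4$ to $H(2)=4$, I would combine the upper bound above---whose maximum over all admissible configurations is $4$, attained at $M=1$, $N=2$---with the classical lower bound furnished by explicit quadratic systems exhibiting four limit cycles in the $(3,1)$ configuration (Shi; Chen--Wang), giving $H(2)\ge 4$. The main obstacle is case (b): Theorem \ref{principall} is genuinely non-sharp there and must be supplemented by the parity/transversality argument above; moreover the exact equality $H(2)=4$ rests on importing an external construction for the lower bound, which lies outside the present variational machinery.
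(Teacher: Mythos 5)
Your proposal is correct and follows essentially the same route as the paper: specialize Theorem \ref{principall} to $n=2$, $M=1$, $N\in\{0,1,2\}$, dispose of the constant/zero-divergence and no-contact-point cases by a Bendixson-type (divergence-integral) argument, and import the Shi and Chen--Wang examples for the lower bound $H(2)\ge 4$. The only differences are cosmetic: for a single contact point the paper observes it is necessarily a double root of \eqref{salto0} and keeps the bound at $4$, whereas you plug $N=1$ into \eqref{cotaprin} to get $3\le 4$; and your transversality/parity argument in case (b) is a spelled-out version of the paper's terse claim that limit cycles cannot intersect the line $\Dv=0$ followed by Bendixson.
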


\begin{proof}
The set $\Dv=0$ for a quadratic polynomial differential system is
either empty, a straight line, or the whole plane. If it is empty,
i.e. if $\Dv$ is a non-zero constant, then the system has no limit
cycles by Bendixon criterium (see for instance Theorem 7.10 of
\cite{DLA}). If it is the whole plane, the system is Hamiltonian and
so it has no limit cycles. We assume that $\Dv=0$ is a straight line.
So using \eqref{cotaprin}, we have 
$$
n=2,\quad M=1, \quad N\in\{0, 1, 2\}.
$$

\noindent If $N=2$, contact points are simple, and then 
$$
1+(n-1)^2(M+N)=1+1+2=4.
$$
If $N=1$, the unique contact point has multiplicity two, and the upper bound is the same. 
If $N=0$, then there are no contact points and the limit cycles
cannot intersect the straight line $\Dv=0$. Again by Bendixon
criterium, the system has no limit cycles.

Since quadratic systems with four limit cycles are known (\cite{CW}, \cite{Sh}), we conclude that indeed $H(2)=4$. 
\end{proof}

For Li\'enard systems (\cite{caudu}, \cite{limepu}), we have the following. 

\begin{corollary}
We consider the Li\'enard  polynomial differential systems
\begin{equation}\label{lienard}
\dot x= P(x, y)=y-f(x),\quad \dot y= Q(x, y)=g(x),
\end{equation}
where $p$ is the degree of $f$, and $q$ is the degree of $g$. So $n=
\max\{ p, q\}$. A system \eqref{lienard} cannot have more than 
$$
1+2  (\max\{p, q\}-1)^2(p-1)
$$ 
limit cycles.
\end{corollary}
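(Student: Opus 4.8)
The plan is to apply Theorem \ref{principall} directly, so the entire task reduces to computing (or bounding) the two parameters $M$ and $N$ for the Li\'enard system \eqref{lienard} in terms of the degree $p$ of $f$. The decisive structural feature is that $Q$ depends on $x$ alone and $P$ is affine in $y$: this collapses both the divergence curve and the contact-point system \eqref{salto0} to conditions involving $f$ only, so that $g$ (and hence $q$) will enter the final bound solely through the factor $(n-1)^2$ with $n=\max\{p,q\}$.

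First I would compute the divergence. Since $P_x=-f'(x)$ and $Q_y=0$, one has $\Dv=P_x+Q_y=-f'(x)$, a polynomial of degree $p-1$ in the single variable $x$. Its real zero set consists of at most $p-1$ values $x_1,\dots,x_k$, and the curve $\Dv=0$ in the plane is the union of the corresponding vertical lines $\{x=x_i\}$. Each such line is a connected component homeomorphic to a straight line, so hypothesis (i) of Theorem \ref{principall} holds with
$$
M=k\le p-1.
$$

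Next I would write out system \eqref{salto0}. A short computation using $P_{xx}=-f''(x)$ together with $P_{xy}=P_{yy}=Q_{xy}=Q_{yx}=Q_{yy}=0$ reduces the two equations to
$$
(y-f(x))\,f''(x)=0,\qquad f'(x)=0.
$$
The second equation again forces $x$ to be one of the at most $p-1$ zeros of $f'$; at each such zero, which is simple in the generic situation, one has $f''(x)\ne0$, and the first equation then pins down $y=f(x)$, producing exactly one contact point. Hence $N\le p-1$ as well.

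Combining these two bounds gives $M+N\le 2(p-1)$, and substituting into \eqref{cotaprin} with $n=\max\{p,q\}$ yields
$$
H(n)\le 1+(n-1)^2(M+N)\le 1+2\,(\max\{p,q\}-1)^2(p-1),
$$
which is the asserted bound. The only point requiring care is the genericity underlying Theorem \ref{principall}: one needs the zeros of $f'$ to be simple, so that the components of $\Dv=0$ are genuine straight lines and the contact points are simple solutions of \eqref{salto0}. Since the resulting bound depends only on the degrees, the non-generic reduction of Section \ref{nongen} removes this restriction, and the estimate holds for every Li\'enard system of the form \eqref{lienard}.
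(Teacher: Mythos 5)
Your proof is correct and follows essentially the same route as the paper: identify the curve $\Dv=0$ with the vertical lines over the at most $p-1$ real zeros of $f'$, reduce system \eqref{salto0} to $(y-f(x))f''(x)=0$, $f'(x)=0$, conclude $M\le p-1$ and $N\le p-1$, and substitute into \eqref{cotaprin}. The only difference is minor: where you invoke the genericity reduction of Section \ref{nongen} to handle a common zero of $f'$ and $f''$, the paper observes directly that such a vertical line consists entirely of contact points, hence is invariant, so limit cycles cannot cross it and it need not be counted.
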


\begin{proof}
It is well known that a system \eqref{lienard} has at most $p-1$
connected components because the curve  
$$
\Dv= f'(x)= 0
$$ 
corresponds to
the critical values of the polynomial $f$. Note that each component
is a vertical straight line in the $(x,y)$--plane.  System
\eqref{salto0} becomes
\[
(y-f(x))f''(x)=0, \qquad f'(x)=0,
\]
for differential system \eqref{lienard}. If 
$$
f''(x)=f'(x)=0
$$ 
has a solution $x_0$,
the vertical straight line $x=x_0$ is formed by contact points, so it
is invariant and we do not need to take it into account, because limit
cycles cannot intersect such straight line. Therefore a connected
component of the curve $\Dv=0$ has one single contact point
$(x_0,f(x_0))$ for each zero $x_0$ of the polynomial $f'(x)$ such
that $f''(x_0)\ne 0$. Using again \eqref{cotaprin}, we have 
$$
n= \max\{p, q\},\quad M=N\le p-1.
$$
Hence
\[
1+(n-1)^2 (M+N)  =1+2  (\max\{p, q\}-1)^2(p-1).
\]
This completes the proof.
\end{proof}

\subsection{Driving idea for the proof of Theorem \ref{principall}}
Our guiding principle for the proof of Theorem \ref{principall}, expressed in a single sentence, is:
\begin{quote}
Limit cycles of the differential system \eqref{e1} can be interpreted as zeros of the following functional associated with it in a natural way
\begin{equation}\label{germen}
E_0(x, y)=\int_0^1 \frac12(P(x, y)y'-Q(x, y)x')^2\,dt.
\end{equation}
Use then Morse's inequalities to count them in terms of its critical paths. 
\end{quote}

Even if the following parallelism may be blamed as too naive, it may help some readers stay oriented about where we are heading. It is exactly  a similar situation in dimension one. 
If we are interested in knowing how many real roots a single equation 
$$
f(x)=0
$$ 
may posses, one possibility is to look for global minimizers of the function $$
g(x)=(1/2)f(x)^2.
$$ 
It is obvious that roots of the first correspond exactly to absolute minimizers of the second, given that $g\ge0$ always. Critical points for $g$ are roots of 
$$
f(x)f'(x)=0;
$$ 
and those not corresponding to absolute minimizers 
$$
f(x)\neq0,
$$ 
are  zeros of $f'(x)$. Morse's inequalities in this simple context reduce just to classical Rolle's theorem that yields the upper bound
$$
\#(\hbox{roots of $f$})\le 1+\#(\hbox{roots of $f'$}\setminus\hbox{roots of $f$}).
$$

Going back to our functional \eqref{germen}, note how every periodic solution of \eqref{e1}, suitably reparametrized in [0, 1] for normalization, is a zero of $E_0$ in \eqref{germen}. 

Our contribution consists in an attempt to rigorously show that the simple idea just expressed can be carried out to its fulfillment for the proof of Theorem \ref{principall}. 

Our work is organized in four main overall stages.
\begin{enumerate}
\item Describe and prepare the analytical setting around functional \eqref{germen} for the legitimate use of Morse's inequalities that permit to estimate absolute minimizers of a Morse functional in terms of the number of its critical points other than those absolute minimizers. This requires to work with a suitable perturbation $E_\e$ of $E_0$ in addition to determining the functional analytical scenario. As a consequence of Morse's inequalities, we will be able to bound the number of connected components of sub-level sets $\{E_\e\le a\}$ by the number of its critical points with critical value for $E_0$ uniformly away from zero (with respect to $\e$).
\item Identify suitable classes of paths where limit cycles of the original polynomial differential system \eqref{e1} can be identified in a one-to-one manner with some of the connected components of sub-level sets $\{E_\e\le a\}$.
\item Examine carefully the form of the critical point equation for the relevant functional $E_\e$ that comes from $E_0$ by perturbation. 
\item Count the number of critical paths of $E_\e$ not associated with absolute minimizers of $E_0$, and check that an upper estimate, independent of $\e$, on those is possible. This step is clearly divided in two main parts.
\begin{enumerate}
\item Determine the possible asymptotic distinct behaviors of such branches of critical paths as $\e\searrow0$. 
\item Derive an upper bound on the number of those possible branches converging to each such possible limit behavior. 
\end{enumerate}
\end{enumerate}

This is a general, global description of our strategy. It may be important to have it in mind as we proceed to a much more detailed explanation. Since we will be moving at several levels of different generality, we will systematically used text within boxes, like the following one, to constantly establishing  the parallelism with the application to Hilbert's 16th problem:
\caja{In this way, readers will have a constant reference to the application of more abstract or more general principles to our main particular problem.}  

Our hope is to be able to apply the above simple principle and the corresponding program to our situation. The full path is, however, anything but simple or straightforward. 

\subsection{Final comments} Though very well-known to specialists, Hilbert's 16th problem \cite{Hi} may be not known to mathematicians working in other areas. As described above, we are interested in the second part of it which
deals with the number of limit cycles of planar, polynomial differential systems of the form
$$
x'=P(x, y),\quad y'=Q(x, y),
$$
where both $P$ and $Q$ are polynomials of two variables, and the possibility of providing an upper bound on them depending solely on the maximum degree $n$ of both $P$ and $Q$. 

The history of such a problem along the XX-th century is one of the most fascinating situations in Mathematics one can look at. We will not spend time on this here as there are very good and reliable accounts; particularly \cite{A1}, \cite{A2}, \cite{BNY}, \cite{CL}, \cite{I2}, \cite{Ka}, \cite{Li}, \cite{Ll}, \cite{Yak}, \cite{smale} are indispensable to learn and understand such history. It is also appropriate to recall that, according to Smale \cite{Sm}, except for the Riemann hypothesis, the second part of Hilbert's 16th problem seems to be the most elusive of Hilbert's problems. 
Aside from the various failed attempts to solve the problem, there are essentially two sources to examine this problem. Both \cite{Ec}, \cite{Il} claim the finiteness of the number of limit cycles for any planar, polynomial differential system without establishing a bound in terms of the degree of the system. Again according to Smale (\cite{Sm}), ``these two papers have yet to
be thoroughly digested by the mathematical community". 

Our contribution is written in a self-contained form for the most part of it. Proofs, though, of very classic results in Analysis which are part of the usual background for many non-linear analysts are not covered as they will, most likely, be accepted without discussion when invoked. A list of these  includes:
\begin{itemize}
\item Morse inequalities.
\item Basic concepts and facts of Linear and Non-Linear Functional Analysis.
\item Some geometrical facts about planar curves.
\end{itemize}
A special place is reserved for a couple of classical results of Harnack and Bezout as they have already been used earlier.
There are also some prerequisites that have been taken for granted as most of them  belong to the initial training for master or doctorate students in Analysis:
\begin{itemize}
\item Basic concepts and results in differential systems.
\item Regular dependence of solutions of ODE on initial conditions and parameters.
\item Vector Sobolev spaces in one variable.
\item Basic concepts of functionals defined on Hilbert spaces.
\item Perturbation of differential systems.
\item Basic topological concepts about planar algebraic curves. 
\end{itemize}

The structure of the paper is a bit special with the goal in mind to facilitate understanding in the most transparent, affordable way. Material is presented in a form for which the global proof is grasped little by little. The strategy is made up of various inter-connected fundamental steps, some of which enclose important sub-steps. It is a non-trivial exercise to comprehend how the different pieces of the puzzle pretend to fit with each other. After all, one has to show that indeed they all do so together in a nice overall picture. Even so, 
we hope to have discovered the optimal way of explaining things to the point that even a well-motivated graduate student with a sound background in non-linear and global analysis, and variational techniques will be able to understand and appreciate the global proof. 
This way of presenting things also looks especially relevant given that some interested readers may not be that familiar with variational concepts and techniques which are at the bottom line of our approach. In this vein, the material in some sections may seem dispensable to some readers, yet it may be quite informative and clarifying to others. We are pretty sure that a senior researcher with a solid expertise in global analysis and variational methods will capture the main thread of the proof, as well as technical details of the various stages. 
\section{Strategy and organization}
We will be moving at three levels. 
\begin{enumerate}
\item Some of our ideas can be examined in an abstract setting without paying attention to the particular form of spaces or functionals. 
\item Some others require to restrict attention to the special nature of spaces and functionals, and yet results enjoy a certain degree of generality. 
\item Still others refer specifically to Hilbert's problem.
\end{enumerate}
We will therefore be moving from the general to the specific. 

\subsection{The abstract setting}
Suppose we are working in a given Hilbert space $\L$.
There is a certain important property $(P)$ that some elements of $\L$ enjoy, and we would like to test our ability to count them. Let $\bbP\subset\L$ be the subset of those elements $\bx$ of $\L$ enjoying property $(P)$. 

\caja{
$\L$ is the class of parameterizations of closed (periodic) plane paths or curves with certain differentiability and integrability properties. The subset $\bbP$ would stand for those parameterizations of limit cycles of our differential system \eqref{e1}.
}
\begin{problem}\label{original}
Find an upper bound for the cardinality of $\bbP$ in terms of some of its defining features.
\end{problem}

Since it does not look feasible to deal directly with $\bbP$ in a quantitative way, we  realize that there is a natural functional 
$$
E:\L\to\R^+
$$ 
with the remarkable property
\begin{equation}\label{implicacion}
\bx\in\bbP\Longrightarrow E(\bx)=0.
\end{equation}
Hence, we would like to explore the possibility of solving Problem \ref{original} by counting how many zeroes $E$ may have in $\L$. 

\caja{
Functional $E$ is given in \eqref{germen}, namely
$$
E(\bu)=\frac12\int_0^1(P(x(t), y(t))y'(t)-Q(x(t), y(t))x'(t))^2\,dt,\quad \bu=(x, y).
$$
It is elementary to realize that $E(\bu)=0$ for any parametrization $\bu$ of a limit cycle of differential system \eqref{e1} in the interval $[0, 1]$.
}

There is a standard way to deal with  and count absolute minimizers of smooth, regular functionals. It is like a big, global Rolle's theorem in infinite dimension as already indicated earlier. 
\begin{theorem}[Morse inequalities]\label{morse}
Let $E:\L\to\R$
be a $\mathcal{C}^2$-functional defined over a Hilbert space $\L$, which is bounded from below, coercive, enjoying the Palais-Smale property, and having a finite number of critical points, all of which are non-degenerate and of a finite index. Put $M_k$ for the (finite) number of critical points of $E$, for each fixed index
$k$.  Then
\begin{gather}
M_0\ge1,\quad M_1-M_0\ge-1,\quad M_2-M_1+M_0\ge 1,\quad \dots,\nonumber\\
\sum_{k=0}^\infty (-1)^kM_k=1.\label{morsein}
\end{gather}
\end{theorem}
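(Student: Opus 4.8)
The plan is to run the classical machinery of infinite-dimensional Morse theory, monitoring how the topology of the sublevel sets $E^a:=\{u\in\bH:E(u)\le a\}$ changes as the level $a$ increases past the critical values of $E$. Since $E$ is bounded below, set $c_0=\inf_{\bH}E$; coercivity forces every minimizing sequence to be bounded, and the Palais--Smale property then furnishes a convergent subsequence whose limit is a minimizer. That minimizer is a non-degenerate critical point of index $0$, so $E^a=\emptyset$ for $a<c_0$ and $M_0\ge1$ already holds. The remaining inequalities will follow by turning the filtration $\{E^a\}_a$ into homological information about $\bH$.

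Two deformation results carry the argument. First, a deformation lemma: if the interval $[a,b]$ contains no critical value, then $E^a$ is a strong deformation retract of $E^b$. Because $E$ is only $\mathcal{C}^2$, one constructs a pseudo-gradient vector field and flows along its negative direction; the Palais--Smale property is exactly what prevents the flow lines from stalling or escaping, guaranteeing that $E^b$ can be pushed down onto $E^a$. Second, a handle-attachment result at each critical value $c$: by the infinite-dimensional Morse lemma, near a non-degenerate critical point $p$ of index $k$ the Hessian $D^2E(p)$ is a bounded, invertible, self-adjoint operator, so $\bH$ splits orthogonally into its negative eigenspace (of dimension exactly $k$) and its positive complement, and in suitable coordinates $E(u)=E(p)-\|u_-\|^2+\|u_+\|^2$. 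Consequently $E^{c+\e}$ has the homotopy type of $E^{c-\e}$ with one $k$-cell attached for each index-$k$ critical point sitting at level $c$.

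Since there are only finitely many critical points, iterating these two facts from $c_0$ upward equips a sublevel set above the largest critical value with the structure of a finite CW-type complex having exactly $M_k$ cells of dimension $k$, and this sublevel set is homotopy equivalent to $\bH$ itself. The decisive point is that $\bH$, being a Hilbert space, is contractible, so its Betti numbers are $b_0=1$ and $b_k=0$ for $k\ge1$. The standard subadditivity of homology under cell attachment then gives the polynomial form of the Morse relations,
\begin{equation}\label{polymorse}
\sum_{k\ge0}M_k\,t^k-\sum_{k\ge0}b_k\,t^k=(1+t)\,\mathcal{Q}(t),
\end{equation}
where $\mathcal{Q}(t)=\sum_{k\ge0}q_kt^k$ has non-negative integer coefficients. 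Substituting $b_0=1$, $b_k=0$ $(k\ge1)$ and comparing coefficients yields $M_0=1+q_0\ge1$, then $M_1-M_0=q_1-1\ge-1$, then $M_2-M_1+M_0=q_2+1\ge1$, and so on, which are precisely the claimed inequalities; evaluating \eqref{polymorse} at $t=-1$ gives $\sum_{k\ge0}(-1)^kM_k=1$, the alternating identity \eqref{morsein}.

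The hard part will be the two deformation steps in the infinite-dimensional setting rather than the concluding algebra. Establishing the deformation lemma demands the pseudo-gradient construction together with a careful use of the Palais--Smale property to rule out loss of compactness along the flow, while the handle-attachment step rests on the Morse lemma for $\mathcal{C}^2$ functionals on Hilbert space, whose proof requires the non-degeneracy of $D^2E(p)$ and the finiteness of the index to produce the finite-dimensional negative subspace along which the cell is glued. Coercivity and boundedness from below are used throughout to ensure that the filtration starts at $\emptyset$ and that each sublevel set is a bounded, well-behaved object on which these constructions apply.
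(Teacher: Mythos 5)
The paper offers no proof of this theorem at all: it is quoted as a classical result, with the reader referred to Berger (Corollary (6.5.10)) and Chang (Chapter 1, Theorem 4.3), and your argument is exactly the standard deformation-lemma/handle-attachment proof given in those sources, so it is correct and in essence ``the same'' as what the paper relies on. The only points worth flagging are minor: extracting a minimizer needs a Palais--Smale sequence at the level $\inf E$ (via Ekeland or the deformation lemma, since a bare minimizing sequence need not satisfy $E'(\bu_j)\to\bc$), and for a functional that is only $\mathcal{C}^2$ the cell-attachment step should invoke the second deformation theorem/Gromoll--Meyer splitting rather than the classical Morse--Palais normal form, which is precisely how the cited references proceed.
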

There is a number of fundamental concepts in this statement that need to be examined before its conclusion is utilized. To avoid breaking the thread of our discussion at this stage, let us ignore them for the time being and see how one could use this important result. 

We are most interested in counting a selected set of the zeroes of a non-negative functional 
$$
E:\L\to\R^+
$$ 
that is assumed to comply with all of the requirements for Theorem \ref{morse} to be applied. In particular, zeroes of $E$ must be isolated. 
Those zeroes are part of the class of absolute minimizers of $E$, which in turn is a subclass of the full set of local minimizers. The number of local minimizers is precisely $M_0$. Therefore, from \eqref{morsein}, we find
$$
M_{abs}+(M_0-M_{abs})=M_0=1+\sum_{k=1}^\infty (-1)^{k+1}M_k\le 1+\sum_{k=1}^\infty M_k,
$$
if $M_{abs}\le M_0$ is the number of absolute minimizers. Finally
$$
M_{abs}\le 1+\sum_{k=1}^\infty M_k+(M_0-M_{abs}).
$$
The sum on the right-hand side is the number of all critical points of $E$ which are not global minimizers. If we let $M_{cri}$ be the number of those critical points, we have the upper bound
\begin{equation}\label{cotasup}
M_{abs}\le 1+M_{cri}.
\end{equation}
For those situations for which we can handle an upper bound for $M_{cri}$, we would have an upper bound for $M_{abs}$, i.e. an upper bound on the number of zeroes of $E$. That would solve Problem \eqref{original} assuming we could bound $M_{cri}$ in terms of the defining features of $\bbP$.

\subsection{Initial difficulties} When trying to use Theorem \ref{morse} in the initial Hilbert space $\L$, one may find many troubles. To begin with, even before turning our attention to functional $E$, we realize that in fact infinitely many elements of $\L$ identify a single element of $\bbP$. Said differently, we come to the conclusion that each element of $\bbP$ is in fact represented by infinitely many continua of elements of $\L$, and thus it is not feasible to pretend utilizing Theorem \ref{morse} directly. 

\caja{Limit cycles can be reparameterized in infinitely many ways, even if, for the sake of normalization, we impose $1$-periodicity. The situation is indeed dramatic as, in addition, parameterizations can cover the same limit cycles several times either counter- or clock-wise.
}

Fortunately, we may be able to identify a distinguished open subset $\bbI$ of $\L$ in which each element of $\bbP$ has a unique continuous set of representatives. 
If we do not restrict attention to $\bbI$ everything might be mixed up in a rather nasty way to the point of spoiling the neatness of property \eqref{implicacion}. Working in $\bbI$ may still pose important difficulties, as each element of $\bbP$ might still admit a whole continuum of representatives, but at least we envision the possibility of being able to use Morse inequalities in an appropriate way within $\bbI$. In fact, we realize that different elements of $\bbP$ in Problem \eqref{original} correspond to disjoint components of the zero set $\{E=0\}$ in a suitable subset $\bbI$, and so our problem can be more clearly and specifically formulated as follows.
\begin{problem}\label{originalbis}
Find an upper bound on the number of components  of the zero set $\{E=0\}$ in $\bbI$, for a suitable subset $\bbI\subset\bbL$. From this perspective, set $\bbP$ can be identified with the set of these components.
\end{problem}

The foundational result, restricting attention to subsets $\bbI$,  upon which to start our approach is then
 a similar version of Theorem \ref{morse} which is valid precisely when attention is restricted to suitable subsets of Hilbert spaces. Recall that  a subset $\bbI$ of a Hilbert space $\bbL$ is said to be invariant with respect to a $\cC^1$-functional $E$, if the flow of $-E$ cannot leave $\bbI$, and there is no critical point of $E$ on the boundary $\partial\bbI$. We will be more precise below. 

\begin{theorem}[Morse inequalities for invariant sets]\label{morsevalle}
Let $E:\L\to\R$ 
be a $\mathcal{C}^2$-functional defined over a Hilbert space $\L$, which is bounded from below, coercive, enjoying the Palais-Smale property, and having a finite number of critical points, all of which are non-degenerate and of a finite index. Let $\bbI\subset\L$ be open, topologically equivalent to a ball, and invariant under $E$. Put $M_k(\bbI)$ for the (finite) number of critical points of $E$ in $\bbI$, for each fixed index
$k$.  Then
\begin{gather}
M_0(\bbI)\ge1,\quad M_1(\bbI)-M_0(\bbI)\ge-1,\quad M_2(\bbI)-M_1(\bbI)+M_0(\bbI)\ge 1,\quad \dots,\nonumber\\
\sum_{k=0}^\infty (-1)^kM_k(\bbI)=1.\nonumber
\end{gather}
\end{theorem}
This statement ensures that our previous bound \eqref{cotasup} is still valid 
\begin{equation}\label{morseinvalle0}
M_{abs}(\bbI)\le 1+M_{cri}(\bbI)
\end{equation}
when attention is restricted to an invariant set $\bbI\subset\L$ for the functional $E$. If we can find an upper bound of $M_{cri}(\bbI)$ in terms of parameters determining $\bbP$, then we would have solved Problem \ref{originalbis}.
Note the important topological condition on $\bbI$ in the statement. This is an indispensable requirement to preserve \eqref{cotasup} in the same form \eqref{morseinvalle0}. If $\bbI$ is not topologically equivalent to a ball, then the bound is a bit different depending on topological invariants of $\bbI$ like the Euler characteristic or the Betti numbers (check \cite{chang}, \cite{rothe}). 

\subsection{Troubles persist}\label{forgeneric} But things may turn out to be not so easy yet because, even if we identify such an important candidate $\bbI\subset\L$,  when we turn our attention to functional $E$, considered over $\L$, we might find that it is not expected to comply with requirements in Theorem \ref{morsevalle}. 

\caja{
In fact, functional $E$ in \eqref{germen} does not comply with any of those requirements in Theorem \ref{morsevalle} since, in particular, absolute minimizers for $E$, even in a good candidate $\bbI$, might not be isolated, or we may have infinitely many of them.
Though we will be much more precise later, we anticipate that the set { $\bbI\equiv\bbO_d$, $d\in\N$,  will be the set of smooth, non-singular parameterizations with (global) winding number equal to $+1$, but with a total number of full rounds of its unit, normalized tangent vector, regardless of whether they are run clock- or counterclock-wise, bounded by $d$}. Parameterizations in $\bbO_d$ of different limit cycles belong to different components (in $\bbO_d$) of the zero set $\{E=0\}$ for $E$ as in \eqref{germen} because limit cycles are isolated. 
 Note that it we put $\bbO$ for the class of non-singular parametrizations of planar curves with winding number $+1$, then
\begin{equation}\label{unionn}
\bbO=\bigcup_{d\in\N}\bbO_d,
\end{equation}
the union being a monotone increasing union of sets. Each parameterization of a limit cycle with winding number $+1$ belongs to $\bbO$, and hence to some $\bbO_p$ for a finite $p$.
The set $\bbP$, whose cardinality we would like to bound, is the increasing class of components  of the zero set $\{E=0\}$ in $\bbO_d$ determined by limit cycles of our differential system \eqref{e1}. It is important to realize that there are, obviously, many other components in that zero set; for instance, the ones determined by closed curves that consist in running back and forth a piece of an integral curve of the system. However, if we could work in a Hilbert space where feasible parameterizations 
$$
\bu(t):[0, 1]\to\R^2
$$ 
are $\cC^1$-, and restrict attention to non-singular closed curves
$$
|\bu'(t)|>0\hbox{ for all }t,
$$ 
with winding number $+1$, i.e. the class $\bbO$, then those parameterizations running back and forth a piece of an integral curve lie off $\bbO$. It is most relevant to discard them from our horizon. We will see the importance of working with a fixed, finite value for $d$, as it introduces a certain ``compactness" that is fundamental to discard undesirable behavior when identifying each limit cycle of \eqref{e1} with a connected component of $\{E_\e\le a\}$ in $\bbO_d$.}

The canonical alternative facing such an apparent dead-end is to enlarge our analytical scenario by the consideration of new ingredients. Specifically:
\begin{enumerate}
\item Another Hilbert space $\H\subset\L$ such that $\bbI\subset\bbH$. Associated norms (coming from their respective inner products) are  denoted by $\|\cdot\|_\H$ and $\|\cdot\|_\L$, respectively. The norm $\|\cdot\|_\H$ is strictly finer (larger) than the restriction of $\|\cdot\|_\L$ to $\H$.
\item A perturbation $E_\e:\H\to\R^+$, $\e>0$, of $E_0=E$, 
must be setup so that $E_\e$, for each fixed $\e$, and $\bbI$ verify all assumptions in Theorem \ref{morsevalle}; in particular, $\bbI$ must be invariant for $E_\e$, at least for sufficiently small $\e$.
\end{enumerate}
\caja{
$\L$ will be the class of paths with one derivative which is square-integrable, while $\H$ will be the subset of $\L$ of paths with two derivatives which are square-integrable. The perturbation of $E=E_0$ given in \eqref{germen} will be taken to be of the canonical form
\begin{equation}\label{perturbacion0}
E_\epsilon(\bu)=E_0(\bu)+\frac\epsilon2\|\bu-\bv_0\|^2
\end{equation}
for a suitable smooth path $\bv_0$ to be chosen in an appropriate manner in such a way that the perturbation $E_\e$ in \eqref{perturbacion0} and $\bbI(=\bbO_d)$ comply with all assumptions in Theorem \ref{morsevalle} in the Hilbert space $\H$. The norm here is the norm in $\H$. 
There are some advantages and disadvantages to working with $E_\e$ in $\H$. One first important advantage has already been indicated in connection to complying with requirements in Theorem \ref{morsevalle}. As pointed out earlier, there are many components of the zero set $\{E=0\}$ in $\L$ other than those associated with limit cycles. Every piece of an integral curve of our differential system \eqref{e1} run forward and backwards would determine one such component (in $\L$), and thus we would have infinitely many. However, components of $\{E=0\}$ in $\H$ are associated with $\cC^1$-paths because paths in $\H$ (the standard Sobolev space $H^2([0, 1]; \R^2)$) are $\cC^1$, and not just merely absolutely continuous. As an outcome of our results henceforth, we will see that the number of such components in $\bbO_d\subset\H$ is finite if we stress that paths in $\bbO_d$ are to be $\cC^1$- and non-singular with a vector derivative vanishing nowhere. In particular, approximation of paths in $\L$, which are just absolutely continuous, by non-singular paths in $\H$, which are $\cC^1$-paths avoiding a vanishing derivative, is not possible with a uniformly bounded norm in $\H$. 
}

If we succeed in the two above points, we can hence apply Theorem \ref{morsevalle} to $E_\e$ and $\bbI$, for each fixed $\e$. The landscape of critical points for $E_\e$ in $\bbI$ may have, however, changed with respect to that of $E_0$, in a way hard to control. 
The zero set $\{E_0=0\}$ in Problem \ref{originalbis} becomes disfigured with respect to perturbation $E_\e$ since the zero set for this perturbation is, in general, empty, and so we need to focus rather on sub-level sets of the form $\{E_0\le a\}$ for $a$, positive and small. Note that
$$
\{E_\e\le a\}\subset\{E_0\le a\},\quad \{E_0=0\}\subset\{E_0\le a\}
$$
for every positive $\e$ and $a$. It is clear that if 
$$
\bx\in\{E_0=0\},\quad E_0(\bx)=0,
$$ 
and $a>0$, then, for $\e$ small enough, 
$$
\bx\in\{E_\e\le a\}.
$$ 
The same is correct for a finite subset $\bbP'$ of elements of $\{E_0=0\}$. In addition, for such a fixed finite set $\bbP'$ of elements of $\bbP$, there is always a value $a_{\bbP'}>0$ such that the components of 
$$
\{E_\e\le a\},\quad a<a_{\bbP'},
$$ 
identified by the elements of $\bbP'$, become disjoint for $\e$ small enough. In this way, if one can show that the number of connected components of $\{E_\e\le a\}$ eventually becomes independent of $a$ for sufficiently small $\e$, we would have a clear indication that the number of elements of $\bbP$ is finite, and most possibly one could find the desired upper bound independent of $a$ and $\e$. Though not explicitly written, in the previous discussion everything takes place in $\bbI$ as our ambient space, so that sub-level sets are intersected always with $\bbI$.

\caja{
Limit cycles of our polynomial differential system will identify distinguished components of the sub-level set $\{E_\e\le a\}\cap\bbO_d$ for every positive, sufficiently small, but fixed $a$; every fixed, sufficiently large $d$; and $E_\e$ in \eqref{perturbacion0} for $\e$ sufficiently small. However, more and more components of $\{E_\e\le a\}$ in $\bbO_d$ may occur as $a$ and $\e$ become smaller and smaller, and $d$ larger and larger. If we fix a priori a finite subset $\bbP'$ of those components determined by a selected finite number of limit cycles of system \eqref{e1}, there are always values $a_{\bbP'}>0$, $d_{\bbP'}\in\N$, such that the limit cycles corresponding to $\bbP'$ determine, in a one-to-one manner, disjoint components of $\{E_\e\le a\}$ in $\bbO_d$ for $a<a_{\bbP'}$, $d\ge d_{\bbP'}$, given, and for every $\e$ sufficiently small (depending on $a$). If there are values $a_0>0$, $d_0\in\N$, with the property that the finite number of components of $\{E_\e\le a\}$ in $\bbO_d$, for $a<a_0$, $d\ge d_0$, for every $\e$ sufficiently small, does not change with $a$ or with $d$, then that would make plausible the fact that our differential system has a finite number of limit cycles. Even more, if we are capable of finding an upper bound on such number of components of $\{E_\e\le a\}\cap\bbO_d$ for fixed, but arbitrarily small $a$; fixed but arbitrarily large $d$; and sufficiently small $\e$; independently of $a$, $d$, and $\e$, such a bound will stay as an upper bound for the number of limit cycles. A good way of clarifying all of these statements is to argue that
\begin{equation}\label{identificacionn}
\#(\{E_0=0\}\cap\bbO_d)\le\lim_{a\searrow0}\lim_{\e\searrow0}\#(\{E_\e\le a\}\cap\bbO_d),
\end{equation}
for every (large) $d\in\N$. 
$\#$ stands for the number of components of the corresponding set. 
Suppose that, through Morse inequalities, we can estimate the right-hand side in \eqref{identificacionn} by the number $M_{cri, a, \epsilon, d}$ of some critical paths in $\bbO_d$. Since $\{\bbO_d\}$ is increasingly contained in $\bbO$, an upper estimate for $M_{cri, a, \epsilon, d}$ will lead to
\begin{equation}\label{limitecomp}
\#(\{E_0=0\}\cap\bbO_d)\le\lim_{a\searrow0}\lim_{\e\searrow0}M_{cri, a, \epsilon},
\end{equation}
where $M_{cri, a, \epsilon}$ are the number of corresponding critical paths in $\bbO$. 
Given that the left-hand side in \eqref{identificacionn} is increasing in $d$ as well, with limit $\#\bbP$ as $d\to\infty$ (again by \eqref{unionn}), by \eqref{limitecomp} we see that
$$
\#(\bbP)\le\lim_{a\searrow0}\lim_{\e\searrow0}M_{cri, a, \epsilon}
$$
A main point is then to show that \eqref{identificacionn} is correct.}

\subsection{Our guiding principle}\label{centralz} We adopt the following standard definition. For the time being, we will take all properties in the statement blindly. We will clearly discuss them later at the right time, and with the necessary care. 

\begin{definition} \label{mf}
A functional $E:\H\to\R$
is called a Morse functional if it is
$\C^2$-, non-negative, coercive, enjoys the Palais-Smale property,
and has a finite number of critical points over each sub-level set
$\{E\le c\}$ for each non-critical value $c$, all of which are
non-degenerate and with a finite index.
\end{definition}

The point is to be able to setup, in a advantageous way, the perturbations $E_\epsilon$ to succeed in our goal of solving Problem \ref{originalbis}. 

\begin{theorem}\label{morsefinal}
Let $E_\e:\bbH\to\R^+$
be a family of smooth, non-negative, coercive functionals, with $0\le E_0\le E_\e$, and $\bbO_d$,  $d\in\N$, an increasing family of  open subsets of $\bbH$. As in \eqref{unionn}, put
$$
\bbO=\bigcup_{d\in\N}\bbO_d.
$$
Suppose, in addition, that
\begin{enumerate}
\item $E_\e$ is a Morse functional for positive $\e$.
\item $\bbO_d$ is invariant for all $E_\e$ for all large $d$, and all $\epsilon$.
\item For every $d$, for $a$ sufficiently small, and $\e$ small enough (depending on $a$):
\begin{enumerate}
\item Each component of $\{E_0=0\}\cap\bbO_d$ identifies (is contained in), in a one-to-one manner, one component of $\{E_\e\le a\}\cap\bbO_d$.
\item Each component of $\{E_\e\le a\}\cap\bbO_d$ is topologically equivalent to a ball.
\end{enumerate}
\end{enumerate}
Then 
\begin{equation}\label{desimportante}
\#(\bbP)\le 1+\lim_{a\to0}\lim_{\e\to0}M_{cri, \e, a},
\end{equation}
if $M_{cri, \e, a}$ is the number of critical elements of $E_\e$ in $\{E_\e> a\}\cap\bbO$.
\end{theorem}
\begin{proof}
Pick up an arbitrary, finite subset $\bbP'$ of $\bbP$. 
By hypothesis, we can find $a$, sufficiently small and non-critical for $E_\e$\footnote{By Sard's theorem, critical values of smooth functionals have measure zero. We will recall and utilize this fact later.}, and $d$ large enough (depending both on $\bbP'$), such that 
for $\e$ small enough, some of the components of $\{E_\e\le a\}$ in $\bbO_d$  contain, at most, one and only one of the components identified by $\bbP'$.
Moreover, for some $b_\e$ sufficiently large, due to the coercivity of $E_\e$ in $\H$, the big sub-level set $\{E_\e\le b_\e\}$ has a unique connected component which is topologically equivalent to a ball. 

Let $\bbC_{i, \e, d}$, $i\in I$, be the full set of disjoint components of $\{E_\e\le a\}$ in $\bbO_d$, some of which will correspond to those associated with the selected elements in $\bbP'$. 
Because components of sub-level sets are assumed to be topologically equivalent to a ball and invariant, and the intersection of invariant sets is still invariant, we can apply Theorem \ref{morsevalle} to $E_\e$ and each $\bbC_{i, \e, d}$, separately for each $i\in I$, as well as in 
$$
\bbC_{\e, d}=\{E_\e\le b_\e\}\cap\bbO_d,
$$ 
to find that
\begin{equation}\label{igualdades}
\sum_k (-1)^k M_k(\bbC_{i, \e, d})=1,\quad \sum_k (-1)^k M_k(\bbC_{\e, d})=1,
\end{equation}
for each $i$. The sum in \eqref{morseinvalle0} is additive with respect to disjoint sets where it is considered, and hence
$$
\sum_i\sum_k(-1)^kM_k(\bbC_{i, \e, d})+\sum_k(-1)^kM_k(\bbC_{\e, d}\setminus\cup_i\bbC_{i, \e, d})=\sum_k (-1)^k M_k(\bbC_{\e, d}).
$$
We conclude, because of \eqref{igualdades}, that
$$
\#(I)+\sum_k(-1)^kM_k(\bbC_{\e, d}\setminus\cup_i\bbC_{i, \e, d})=1.
$$
From here, since, again by our hypotheses, each $\bbC_{i, \e, d}$ cannot contain more than one component of the zero set of $E_0$, 
$$
\#(\bbP')\le\#(I)\le 1+\sum_kM_k(\bbC_{\e, d}\setminus\cup_i\bbC_{i, \e, d}).
$$
The sum that remains is the total number of critical points of $E_\e$ off the union of all the generalized components of $\{E_\epsilon\le a\}\cap\bbO_d$, which is certainly smaller than those in the set
$$
\{a<E_\e\le b_\e\}\cap\bbO.
$$
If we put $M_{cri, \e, a}$ for such total number of critical points of $E_\e$ in $\bbO$ with critical value uniformly away from zero, i.e. the number of critical points of $E_\e$ in the subset $\{a< E_\e\}\cap\bbO$, then we recover the upper bound
\begin{equation}\label{cotasupe}
\#(\bbP')\le 1+M_{cri, \e, a}.
\end{equation}
The arbitrariness of $\bbP'$, $a$ and $\e$ under the established conditions finishes the proof.
\end{proof}

If the number $M_{cri, \e, a}$ admits further an upper bound $M_{cri}$ independent of $\e$ and of $a$, as they become small, respectively, in the appropriate order, in terms of the defining properties of $\bbP$, then the full subset $\bbP$ of those special components of the level set $\{E_0=0\}$
is finite and
$$
\#(\bbP)\le 1+M_{cri}.
$$
This is our desired goal in connection with Problem \ref{originalbis}.

\begin{problem}\label{originalbisbis}
Let $E_\e:\bbH\to\R^+$
be a perturbation of $E_0$ with $0\le E_0\le E_\e$, and $\bbO_d\subset\bbH$,  complying with the hypotheses in Theorem \ref{morsefinal}. Find an upper bound $M_{cri}$ on the number of critical elements of $E_\e$ in $\bbO$ with critical value uniformly (with respect to $\e$) away from zero, which is independent of $\e$, i.e.
$$
\lim_{\e\to0}M_{cri, \e, a}\le M_{cri}
$$
for every fixed $a$, sufficiently small.
\end{problem}

\caja{
To sum up, we have already given in \eqref{perturbacion0} the form of the perturbations $E_\e$ of $E_0$, the ambient space $\bbH$, and the subset $\bbO_d$ of smooth, non-singular parameterizations with winding number $+1$, and a maximum number $d$ of full rounds of the tangent vector in either sense. Our job consists of showing that all requirements in Theorem \ref{morsefinal} are met, and then find the upper bound $M_{cri}$ in Problem \ref{originalbisbis}. 
}

\subsection{The general setting}
As seen in the previous subsection, the clue to the success of our strategy is to be able to count the critical points of the smooth, perturbed functional $E_\e$ with critical value uniformly away from zero, i.e. the critical points in the super-level set $\{a<E_\e\}\cap\bbO$ with $a>0$ fixed, small but otherwise arbitrary; and $\e$ arbitrarily small. To advance in this issue, one needs to understand the nature of critical points for $E_\e$, and this forces us to be more specific about the nature of spaces $\H$, $\L$, and functionals $E=E_0$ and $E_\e$, as well as invariant sets $\bbO_d$.

The class of functionals better known in Analysis are local, integral functionals of the standard form
$$
E(\bu)=\int_0^1 F(t, \bu(t), \bu'(t), \dots, \bu^{k)}(t))\,dt.
$$
We have just written $E$ as a one-dimensional functional because this is the situation we are interested in here. $k$ is the highest order of derivatives explicitly participating in the functional. 
We have also normalized the interval of integration to the unit interval $[0, 1]$. We will restrict attention, to avoid such great generality, to the situation in which 
\begin{equation}\label{orderuno}
E(\bu)=\int_0^1 F(t, \bu(t), \bu'(t))\,dt
\end{equation}
is a first-order, one-dimensional functional defined for planar paths
$$
\bu(t)=(u_1(t), u_2(t))
$$
belonging to a suitable Hilbert space, and for a certain integrand
$$
F(t, \bu, \bv): [0, 1]\times\R^2\times\R^2\to\R^+.
$$
The natural Hilbert space is
$$
\L=H^1_O([0, 1]; \R^2)
$$
of periodic paths, $\bu(0)=\bu(1)$ (but this common value is undetermined), with a weak first derivative $\bu'(t)$ which is square-integrable in the unit interval
$$
\int_0^1|\bu'(t)|^2\,dt<\infty.
$$
\caja{
The integrand corresponding to \eqref{germen} is
\begin{equation}\label{germencon}
F(t, \bu, \bv)=\frac12(\bv\cdot\bF^\perp(\bu))^2,\quad \bF=(P, Q), \bF^\perp=(-Q, P).
\end{equation}
}

There is something wrong, from our perspective in this paper, with such integrand $F$ in \eqref{germencon} in the sense that corresponding functional $E$ in \eqref{orderuno} does not comply with assumptions in Theorem \ref{morsevalle}, as announced earlier. One needs to perturb it and pass down to a better space of paths, namely
\begin{equation}\label{perturbacion1}
\H=\espacioo,\quad E_\e(\bu)=E(\bu)+\frac{\e}2\|\bu-\bv_0\|^2_{\H}.
\end{equation}
$\espacioo$ is the space of paths with a weak second derivative $\bu''(t)$ that is square-integrable
$$
\int_0^1|\bu''(t)|^2\,dt<\infty,
$$
which are periodic
$$
\bu(0)=\bu(1),\quad \bu'(0)=\bu'(1).
$$
The specific path $\bv_0(t)\in\H$ is to be chosen in an appropriate way to ensure that Theorem \ref{morsevalle} can, this time, be applied to $E_\e$ in $\H$. In this way, the perturbed functionals become of second order. These are of the general form
\begin{equation}\label{orderdos}
E(\bu)=\int_0^1 F(t, \bu(t), \bu'(t), \bu''(t))\,dt.
\end{equation}
\caja{
We will be working with the explicit perturbation
$$
E_\epsilon(\bu)=E_0(\bu)+\frac\e2\int_0^1[|\bu''(t)-\bv''_0(t)|^2+|\bu'(t)-\bv'_0(t)|^2+|\bu(t)-\bv_0(t)|^2]\,dt.
$$
}

In addition to this passage to second-order problems, there are three main issues worth realizing.
\begin{enumerate}
\item The choice for $\bbO_d$. Recall that $\bbO$ is the non-decreasing union of the $\bbO_d$'s. 
\end{enumerate}
\caja{
Since our main concern is focused on limit cycles of planar, polynomial vector fields, we will consider the set $\bbO_d$ as the subset of $\espacioo$ of non-singular paths (or parameterizations) with winding number $+1$, and a maximum number $d$ of full rounds of the tangent vector in either sense. The concept of winding number goes back to Whitney \cite{whitney} who formalized it and proved its main properties. The limit cycles we are interested in counting definitely have winding number +1 (run counter-clockwise). Our counting procedure will take place in $\bbO_d$, which will be shown to be an open subset of $\espacioo$. }
\begin{enumerate}
\item[(2)] A main point in this part of our development is to understand how to set up and manipulate the equation for critical paths of integral functionals as the ones written above. It is something well-known that we are talking about systems of ODEs of order four. 
\item[(3)] The fact that the perturbed functionals $E_\e$ are of the form \eqref{perturbacion1} with a norm in $\H$ which is finer than the norm in $\L$, carries us to face singularly-perturbed problems and their asymptotic behavior as $\e\to0$. This is a delicate area that asks for fine points in arguments. 
\end{enumerate}

\subsection{The concrete setting, and the full program}\label{uniformconv}
We have already introduced the final ingredient to be specified in order to have a full description of the situation.
The basic functional $E=E_0$ is of the form \eqref{orderuno}
$$
F(t, \bu, \bv)=\frac12(\bv\cdot\bF^\perp(\bu))^2,\quad E(\bu)=\int_0^1\frac12(\bu'(t)\cdot\bF^\perp(\bu(t)))^2\,dt,
$$
if  
$$
\bF(x, y)=(P(x, y), Q(x, y))
$$ 
is the given polynomial planar field, and
$$
\bF^\perp(x, y)=(-Q(x, y), P(x, y)).
$$
It is pretty clear that if $\bu$ is a limit cycle, reparameterized in the unit interval for normalization, of the differential system \eqref{e1} with right-hand side $\bF$, then $\bu\in\bbO$ and $E(\bu)=0$. The type of perturbations that we will be dealing with is
$$
E_\e(\bu)=E(\bu)+\frac\e2\int_0^1\left(|\bu''(t)-\bv''_0(t)|^2+|\bu'(t)-\bv'_0(t)|^2+|\bu(t)-\bv_0(t)|^2\right)\,dt,
$$
as indicated above, where the auxiliary path $\bv_0\in\bbO$ will be suitably chosen. 

For the sake of readers not familiar with these facts, we include the following statement which have been mentioned earlier.
\begin{proposition}\label{versioncero}
Paths in $H^1([0, 1]; \R^2)$ are absolutely continuous. Paths in $H^2([0, 1]; \R^2)$ are $\cC^1$ with a derivative $\bu'$ which is absolutely continuous.
\end{proposition}
\begin{proof}
Note that
$$
|\bu(r)-\bu(s)|\le\int_s^r|\bu'(t)|\,dt\le\sqrt{r-s}\left(\int_0^1|\bu'(t)|^2\,dt\right)^{1/2}
$$
for $\bu\in H^1([0, 1]; \R^2)$. Hence $\bu$ is absolutely continuous. The same inequality replacing $\bu$ and $\bu'$ by $\bu'$ and $\bu''$, respectively, when $\bu\in H^2([0, 1]; \R^2)$, shows the second part.
\end{proof}

At this stage, we are ready to specify the program we would like to cover to its fulfillment for the proof of our central result Theorem \ref{principall}.
\begin{enumerate}
\item Show how to select auxiliary path $\bv_0$ to ensure that $E_\e$ is eligible for Theorems \ref{morsevalle} and \ref{morsefinal}. In particular, 
\begin{enumerate}
\item Argue that $\bbO_d$ is open and invariant for each $E_\e$, for $d$ large enough independently of $\e$.
\item For $a$ small, $\e$, small enough, and every fixed $d\in\N$:
\begin{enumerate}
\item Components of $\{E_\e\le a\}\cap\bbO_d$  are topologically equivalent to a ball.
\item Components of $\{E_0=0\}\cap\bbO_d$ determine, in a one-to-one manner, components of $\{E_\e\le a\}\cap\bbO_d$.
\end{enumerate} 
\end{enumerate}
\item Determine the equation for critical paths for $E_0$ and $E_\e$.
\item Find an upper bound for the critical paths of $E_\e$ in $\bbO$ with critical value uniformly away (with respect to $\e$) from zero in the following way:
\begin{enumerate}
\item Show that critical values of $E_\e$ in $\bbO$ uniformly away from zero (with respect to $\e$) are such that $E_0$ is also uniformly away from zero.
\item Examine the possible asymptotic limits, as $\e\searrow0$, of such families of critical paths with critical values uniformly away from zero, through the limit equation as in typical singularly-perturbed differential problems.
\item For each asymptotic limit in the previous item, find an upper bound on the number of possible branches converging to each such limit.
\end{enumerate}
\end{enumerate}

\caja{
As indicated earlier in several places, our specific family of functionals is given by
\begin{gather}
\int_0^1\frac12(\bF^\perp(\bu(t))\cdot\bu'(t))^2\,dt+\label{persegor}\\
\frac\e2\int_0^1[|\bu''(t)-\bv''_0(t)|^2+|\bu'(t)-\bv'_0(t)|^2+|\bu(t)-\bv_0(t)|^2]\,dt\nonumber
\end{gather}
where
\begin{gather}
\bF(x, y)=(P(x, y), Q(x, y)),\nonumber\\ 
\bF^\perp(x, y)=(-Q(x, y), P(x, y)),\quad \bu=(x, y).\nonumber
\end{gather}
We will be especially interested in showing the following:
\begin{enumerate}
\item show that 
$$
E'_0(\bu_\e)\to\bcero\hbox{ in }H^2([0, 1]; \R^2)
$$ 
for every branch of critical paths $\bu_\e$ of $E_\e$, and that the numbers $E_0(\bu_\e)$ are uniformly away from zero;
\item based on the important information of the previous item, make a full, concrete description of the possible limit behaviors of branches of critical paths $\bu_\e$ in terms of features of the differential system \eqref{e1}; and
\item for each possible limit behavior of such branches in the previous item, argue that there cannot be more than $(n-1)^2$ branches of critical, non-minimizer paths.
\end{enumerate}
}

Note that singularly-perturbed problems for critical points, and not just for minimizers, are not usually treated. In addition, formal proofs of results for such perturbation problems, under periodicity conditions, are not easy to find. Most of the time in the literature, calculations are informal, even more so under periodic end-point conditions. 

There is an additional, final simple step for our program to take care of a non-generic situation for a full proof of Theorem \ref{t5} (Subsection \ref{nongen}). 
\section{Abstract results}\label{tres}
We gather in this section those results that can be shown in an abstract setting without specifying the nature of spaces or functionals. 
The fundamental result that is driving us in this section is Theorem \ref{morsevalle} which we restate below.

There is even a more general statement than Theorem \ref{morsevalle} that incorporates some fundamental topological invariants like the Euler characteristic, the Betti numbers, etc, and that can be stated in the context of infinite-dimensional manifolds (\cite{chang}, \cite{rothe1}-\cite{rothe}). Since all that we need for our goal in this article is inequality \eqref{morseinvalle} in Theorem \ref{morsevalled} below, we will restrict attention to the situation in the statement, and forget about those more general cases. 
Our plan is then to define and discuss the concepts involved in Definition \ref{mf}, which are
mentioned in this statement of Theorem \ref{morsevalled} in preparation to prove those conditions for a perturbation $E_\e$ of a given initial functional $E_0$. 
Our discussion is intended right to the point with a minimal number of elements for a full and rigorous proof.  Recall (Definition \ref{mf}) that a functional 
$E:\H\to\R$ defined in a Hilbert space $\H$
is called a Morse functional if:
\begin{itemize}
\item it is $\C^2$-, non-negative, coercive;
\item it enjoys the Palais-Smale property;
\item it has a finite number of critical points over each sub-level set
$\{E\le c\}$ for each non-critical value $c$, all of which are
non-degenerate and with a finite index.
\end{itemize}

\subsection{Morse inequalities}
Morse inequalities in the full space $\H$ (Theorem \ref{morse}) is such a classical result that we will take it for granted without proof. 
In an infinite-dimensional setting, it can be found in several places, for instance,
Corollary (6.5.10) of \cite{berger} or Theorem 4.3 of Chapter 1 in
\cite{chang}. As a matter of fact, our above version of Morse inequalities restricted to an invariant set $\bbI$ is not easy to find as such. Corollary (6.5.11) of \cite{berger} is a particular version of it when $\bbI$ is an invariant ball in $\H$. The proof of this case in that reference \cite{berger}, however, makes it very clear that the result is valid not just for a ball, but for a general invariant subset $\bbI$ which is topologically equivalent to a ball. 

\begin{theorem}\label{morsevalled}
Let $E:\H\to\R$ be a Morse functional (according to Definition \ref{mf}) over a Hilbert space $\H$. Let $\bbI$ be open, topologically equivalent to a ball, $E$-invariant, and put $M_k(\bbI)$ for the (finite) number of critical points of $E$ in $\bbI$, for each fixed index
$k$.  Then
\begin{gather}
M_0(\bbI)\ge1,\quad M_1(\bbI)-M_0(\bbI)\ge-1,\quad M_2(\bbI)-M_1(\bbI)+M_0(\bbI)\ge 1,\quad \dots,\nonumber\\
\sum_{k=0}^\infty (-1)^kM_k(\bbI)=1.\label{morseinvalle}
\end{gather}
\end{theorem}
\begin{proof}
If a set $\bbI$ is invariant for a Morse functional $E$, every concept or fact that depends exclusively on the flow of $-E$ can be restricted to $\bbI$ without change. 
On the other hand, since $\bbI$ is topologically equivalent to a ball, all of its homology groups coincide with those of the full space $\H$. Since Morse inequalities depend on those topological invariants, they remain valid when restricted to such invariant sets.
\end{proof}

When a certain base functional $E_0$ does not comply with the hypotheses of Theorem \ref{morsevalled}, there is a general, abstract procedure that permits to perturb it appropriately in such a way that the resulting perturbation $E_\e$ turns out to be a Morse functional, i.e. all of above requirements in Definition \ref{mf} and Theorem \ref{morsevalled} hold for $E_\e$. In addition, we would have to be convinced that the topology of sub-level sets $\{E_0\le a\}$ for $a$ near the absolute minimum of $E_0$ is not changed when we replace $E_0$ by $E_\e$. 

\subsection{Main concepts}
The statement of Theorem \ref{morsevalled} based on Definition \ref{mf},  involves
the notion of Morse index for a non-degenerate critical point of a
smooth functional defined on a Hilbert space. Unless otherwise explicitly stated, we will take, throughout this section, $E:\bbH\to\R$ to be a functional defined in an abstract Hilbert space $\bbH$ with elements $\bu$.

\begin{definition}
\begin{enumerate}
\item $E$ is coercive if
$$
E(\bu)\to\infty\hbox{ as }\|\bu\|\to\infty.
$$
\item $E$ is $\C^1$ if for every $\bu\in\bbH$ there is a linear functional 
$$
E'(\bu):\bbH\to\R
$$ 
such that
$$
\frac1{\|\bv\|}\|E(\bv)-\langle E'(\bu), \bv\rangle-E(\bu)\|\to0\hbox{ as }\|\bv\|\to0.
$$
The Riesz representation theorem implies that $E'(\bu)\in\bbH$.
\item An element $\bu\in\bbH$ is a critical point of a $\C^1$-functional $E$ if 
$$
E'(\bu)=\bcero\in\bbH.
$$ 
A real number
$c\in\R$ is a critical value of $E$ if there is a critical point
$\bu$, $E'(\bu)=\bcero$, such that $c=E(\bu)$.
\item $E$ is $\C^2$ if it is $\C^1$, and for each $\bu\in\H$, there is a bilinear, symmetric map 
$$
E''(\bu):\bbH\times\bbH\to\R
$$
such that
$$
\frac1{\|\bv\|^2}\|E(\bu)-E''(\bu)(\bv, \bv)-\langle E'(\bu), \bv\rangle-E(\bu)\|\to0\hbox{ as }\|\bv\|\to0.
$$
Again by the Riesz representation theorem, one can interpret the Hessian $E''(\bu)$ as a linear map
$$
E''(\bu):\H\to\H,\quad \langle E''(\bu)\bv, \bw\rangle=E''(\bu)(\bv, \bw).
$$
\end{enumerate}
\end{definition}
Suppose 
$E:\H\to\R$ 
is a non-negative, coercive,
$\C^2$-functional defined over a Hilbert space $\H$. Let $\bu\in\H$
be a critical point of $E$, i.e. $E'(\bu)=\bcero$. 

\begin{definition}
A critical point $\bu\in\H$ of $E$ is called {\it non-degenerate} is the self-adjoint operator 
$E''(\bu):\H\to\H$ 
is invertible. Otherwise, $\bu$ is said to be {\it degenerate}. When the number of
negative eigenvalues of $E''(\bu)$ is finite, such number is called
the {\rm (}Morse{\rm )} index of $\bu$.
\end{definition}

A main, indispensable condition for Morse theory 
to hold is the {\it Palais-Smale condition}. It enables the passage of typical arguments in finite dimension to an infinite-dimensional setting.
For a
general, smooth $\C^1$- functional 
$E:\H\to\R,$ 
this important
compactness property reads:
\begin{quote}
If for a sequence $\{\bu_j\}$ we have that $E(\bu_j)\le K$ for all
$j$ and a fixed positive constant $K$, and $E'(\bu_j) \to\bcero$ as
$j\to\infty$, then a certain subsequence of $\{\bu_j\}$ converges
(strongly) in $\H$.
\end{quote}
If $E$ is coercive, we can replace the boundedness of $E$ along the
sequence $\{\bu_j\}$ by the uniform boundedness of $\{\bu_j\}$ in
$\H$. 

Though we have already talked about invariant sets of functionals in Theorem \ref{morsevalled}, for the sake of completeness we include here a more formal, precise definition. 
Suppose $E:\H\to\R$ is a $\C^2$-functional, and consider the steepest-descent flow $\bX$ of $-E$, i.e. 
$$
\bX'(t, \bu)=-E'(\bX(t, \bu))\hbox{ for }t>0,\quad \bX(0, \bu)=\bu.
$$
The regularity assumed on $E$ implies that the flow is defined for every positive $t$. For fixed positive $t$, the mapping 
$$
\bX(t, \cdot):\H\to\H
$$
can be easily shown to be $\C^1$. This is standard. 

\begin{definition}\label{valle}
A connected open subset $\bbS\subset\H$ is  said to be $E$-invariant, for a $\C^2$-functional $E$, if its boundary $\partial \bbS$ does not contain a critical point of $E$, and 
$$
\bX(t, \bbS)\subset\bbS
$$
for all positive $t$. The intersection of $E$-invariant sets is also $E$-invariant. When one deals with one functional, we will simply talk about invariant sets.
\end{definition}

Note that connected components of sublevel sets 
$$
\{E\le c\}=\{\bu\in\H: E(\bu)\le c\}
$$
for a non-critical value $c$ are invariant. If they are intersected with an additional invariant set, they remain invariant. 

\subsection{Perturbation}

Suppose we have two nested Hilbert spaces $\H\subset\L$ with associated norms (coming from their respective inner products) $\|\cdot\|_\H$ and $\|\cdot\|_\L$, respectively. The norm $\|\cdot\|_\H$ is strictly finer (larger) than the restriction of $\|\cdot\|_\L$ to $\H$.
We take as a fact that bounded sequences in $\H$ are relatively compact in $\L$. The norm $\|\cdot\|$ always means $\|\cdot\|_\H$. 

Let $E_0:\L\to\R$ be a certain non-negative, $\C^2$-functional. It turns out that $E_0$ is far from being a Morse functional so that Theorem \ref{morsevalled} cannot be applied to it. Our intention is to perturb it through an additional term of the form 
$$
\tilde E_\epsilon:\H\to\R, \quad \tilde E_\epsilon(\bu)=E_0(\bu)+\frac\epsilon2\|\bu\|_\H^2
$$
and show that a choice $\bv_0\in\H$ is possible in such a way that the modified functional 
$$
E_\epsilon:\H\to\R, \quad E_\epsilon(\bu)=E_0(\bu)+\frac\epsilon2\|\bu-\bv_0\|_\H^2
$$
is a Morse functional for every positive $\epsilon$. As a matter of fact, we have plenty of freedom to choose $\bv_0$. The success of this process requires the base functional $E_0$ to comply with the two compactness properties that follow:
\begin{enumerate}
\item $E'_0:\H\to\H$ (regarded as defined in $\H$) is a (non-linear) compact operator:
$$
\bu_j\rightharpoonup \bu\hbox{ implies }E'_0(\bu_j)\to E'_0(\bu).
$$
The sign $\rightharpoonup$ indicates weak convergence in $\H$, i.e.
$$
\langle\bu_j, \bv\rangle\to\langle\bu, \bv\rangle
$$
for every fixed $\bv\in\H$. 
\item For each $\bu\in\H$, the self-adjoint, linear operator
$$
E''_0(\bu):\H\to\H
$$ 
is compact.
\end{enumerate}

More specifically, we want to prove the two following results. Our starting point is 
the non-negative, $\C^2$-functional
$$
E_0:\H\subset\L\to\R
$$ 
where the injection $\H\hookrightarrow\L$ is compact. 

\begin{lemma}\label{importante}
Suppose $E'_0:\H\to\H$ is  a compact operator, and let $\V\subset\H$ be an open subset. There is $\bv_0\in\V$ 
such that the perturbed functional
\begin{equation}\label{perturbacion}
E_\e(\bu)=E_0(\bu)+\frac\e2\|\bu-\bv_0\|^2
\end{equation}
is non-negative, coercive, $\C^2$-, complies with the Palais-Smale
condition, and has a finite number (possibly depending on $\e$ and
$\a$) of non-degenerate  critical points in every finite sub-level
set of the form $\{E_\e<\a\}$ for arbitrary non-critical value $\a$. 
\end{lemma}

\begin{lemma}\label{eimportante}
Assume, in addition to the main assumption in Lemma \ref{importante}, that for each $\bu\in\H$, the linear, self-adjoint operator 
$$
E''_0(\bu):\H\to\H
$$ 
is compact. Let $E_\epsilon$ be the perturbation in that lemma. Then for each positive $\epsilon$, every critical point of $E_\epsilon$ is non-degenerate and has a finite Morse index. 
\end{lemma}

We can sum up these two fundamental facts into a single important, general statement.
\begin{theorem}\label{central}
Let $\H\hookrightarrow\L$ be two nested Hilbert spaces with compact injection. 
Let 
$$
E_0:\H\subset\L\to\R
$$ 
be a non-negative, $\C^2$-functional such that the derivative operators
$$
E'_0:\H\to\H,\quad E''_0(\bu):\H\to\H
$$ 
are compact, the second one 
for every fixed $\bu\in\H$. Then for every open subset $\V\subset\H$, there is 
$\bv_0\in\V$
such that the perturbed functional
\begin{equation}\label{perturbacion}
E_\e(\bu)=E_0(\bu)+\frac\e2\|\bu-\bv_0\|^2
\end{equation}
is a Morse functional.
\end{theorem}

For proving this main result, we need some preliminary abstract definitions
and facts, which we state next for the sake of completeness, most of which can be found in the book of Berger
\cite{berger}, among others.

\subsection{More concepts}
Suppose that 
$E:\H\to\R$ 
is a smooth $\C^2$-functional defined in a
Hilbert space $\H$. We shall use the following concepts in addition to the ones already introduced earlier. 
\begin{itemize}
\item[(i)]  An element $\bu\in\H$ is called a {\it regular} point for a non-linear 
$\C^1$-operator 
$\F:\H\to\H$ 
if the linear operator 
$$
\F'(\bu):\H\to\H
$$ 
is surjective. Otherwise, $\bu$ is called a {\it singular} point for $\F$. When $\F$ is the derivative of a $\C^2$-functional $E:\H\to\R$, then a critical point $\bu$ for $E$ is degenerate (respectively, non-degenerate) if it is singular (respectively, regular) for $\F=E'$. Note that in this case $\F'=E''$ is a self-adjoint operator, and so it is surjective if and only if it is bijective, see Section 2.7 in \cite{brezis}, for instance. The image $\F(\bu)$ of a singular point $\bu$ is called a singular value of $\F$.
\item[(ii)]  A mapping 
$\F:\H\to\H$ 
is a {\it non-linear Fredholm operator} if its Fr\'echet derivative 
$$
\F'(\bu):\H\to\H
$$ 
is a linear Fredholm map for each $\bu\in\H$. The {\it index} of $\F$ is defined to be the difference of the dimensions of the kernel and the cokernel of $\F'(\bu)$. This index is independent of $\bu$.
\item[(iii)]  The functional $E$ is a {\it Fredholm functional} if 
$E':\H\to\H$ 
is a Fredholm mapping, i.e. if 
$$
E''(\bu):\H\to\H
$$ 
is a linear Fredholm map for each $\bu\in\H$.
\end{itemize}

We state several interesting facts (page 100 in \cite{berger}).
\begin{proposition}\label{bas1}
The following statements hold.
\begin{itemize}
\item[a)] Any diffeomorphism between Banach spaces is a Fredholm map of index zero.
\item[b)] If $\F$ is a Fredholm map, and $\G$ is a compact operator, then the sum $\F+\G$ is also Fredholm with the same index as $\F$.
\end{itemize}
\end{proposition}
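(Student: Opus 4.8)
The plan is to handle the two items separately, in both cases reducing the question to the \emph{linear} theory of Fredholm operators by passing to Fr\'echet derivatives, in accordance with definitions (iii) and (iv).

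For a), I would argue directly. If $\F:\bH\to\bH$ is a diffeomorphism with (smooth) inverse $\F^{-1}$, then differentiating the identities $\F^{-1}\circ\F=\id$ and $\F\circ\F^{-1}=\id$ by the chain rule shows that, at every $\bx$, the bounded linear operator $\F'(\bx):\bH\to\bH$ is invertible, its inverse being $(\F^{-1})'(\F(\bx))$. An invertible bounded operator is injective and onto, so both $\ker\F'(\bx)$ and the cokernel of $\F'(\bx)$ are trivial; hence $\F'(\bx)$ is linear Fredholm of index $0-0=0$ for each $\bx$, which is exactly the assertion that $\F$ is a Fredholm map of index zero.

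For b), the strategy is to differentiate the sum, $(\F+\bG)'(\bx)=\F'(\bx)+\bG'(\bx)$, and to observe that $\F'(\bx)$ is linear Fredholm by hypothesis while $\bG'(\bx)$ is a \emph{compact} linear operator, since the Fr\'echet derivative of a ($\C^1$) compact operator is compact. It therefore suffices to prove the linear statement: if $T:\bH\to\bH$ is linear Fredholm and $K:\bH\to\bH$ is linear compact, then $T+K$ is linear Fredholm and $\operatorname{ind}(T+K)=\operatorname{ind}(T)$. Granting this and applying it pointwise with $T=\F'(\bx)$, $K=\bG'(\bx)$, we obtain that $(\F+\bG)'(\bx)$ is linear Fredholm with index $\operatorname{ind}\F'(\bx)$; since $\F$ is a Fredholm map this index is independent of $\bx$, so $\F+\bG$ is a Fredholm map of the same index as $\F$.

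To prove the linear statement I would use a parametrix for $T$: Fredholmness yields a bounded $S$ and finite-rank projections $P_1,P_2$ with $ST=\id-P_1$ and $TS=\id-P_2$. Then $S(T+K)=\id-P_1+SK$ and $(T+K)S=\id-P_2+KS$ are both of the form $\id+(\text{compact})$, so the Riesz--Schauder theory for identity-plus-compact operators forces $\ker(T+K)$ to be finite dimensional and the range of $T+K$ to be closed of finite codimension; thus $T+K$ is Fredholm. For the equality of indices I would run the homotopy $t\mapsto T+tK$, $t\in[0,1]$: each $T+tK$ is Fredholm by the same parametrix argument, the path is norm-continuous, and the integer-valued Fredholm index is locally constant on the open set of Fredholm operators, hence constant along the path, giving $\operatorname{ind}(T+K)=\operatorname{ind}(T)$. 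The genuinely nontrivial ingredient, and the step I expect to be the main obstacle, is this local constancy of the index together with the Riesz--Schauder theory behind it; these are precisely the classical facts recorded on page 100 of \cite{berger}, to which one may appeal, everything else being the chain-rule reduction and the remark that derivatives of compact maps are compact.
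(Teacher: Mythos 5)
Your argument is correct, but it is worth saying up front that the paper does not prove this proposition at all: it is stated as a quoted fact with a pointer to page 100 of \cite{berger}, so there is no in-paper proof to match. What you have done is supply the standard argument that Berger's citation stands in for, and it is sound. Part a) is exactly the chain-rule observation that a diffeomorphism has invertible derivative at every point, hence trivial kernel and cokernel and index $0$. Part b) correctly identifies the two reductions needed to pass from the nonlinear statement to the linear one: first, that $(\F+\bG)'(\bx)=\F'(\bx)+\bG'(\bx)$, and second, the genuinely non-obvious fact that the Fr\'echet derivative of a compact $\C^1$ map is a compact linear operator (provable by noting that $\bG'(\bx)$ is, on the unit ball, a uniform limit of the difference-quotient maps $h\mapsto t^{-1}(\bG(\bx+th)-\bG(\bx))$, each of which has relatively compact image, and a uniform limit of maps with totally bounded range has totally bounded range). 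Your parametrix argument for the linear perturbation statement and the homotopy $t\mapsto T+tK$ for the index equality are the classical Riesz--Schauder route; the only steps you gloss over (closedness of a finite-codimensional range containing a closed finite-codimensional subspace, and local constancy of the index) are standard and are precisely what the paper's citation to \cite{berger} is meant to cover. In short: where the paper cites, you prove, and the proof is right; the net effect is a self-contained justification that bottoms out in the same classical linear Fredholm theory the paper implicitly relies on.
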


We recall two additional classic results. The first one is the
Inverse Function Theorem (page 113, \cite{berger}) for Banach spaces.
\begin{theorem}\label{bas2}
Let $\bbF$ be a $\C^1$-mapping defined in a neighborhood of some
point $\bu$ of a Banach space $\X$, with range in a Banach
space $\Y$. If $\bbF'(\bu)$ is a linear homeomorphism of
$\X$ onto $\Y$, then $\bbF$ is a local homeomorphism of a
neighborhood $\bU(\bu)$ of $\bu$ to a neighborhood
of $\bbF(\bu)$.
\end{theorem}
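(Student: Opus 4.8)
The plan is to prove this via the Contraction Mapping Principle, following the classical route for the Banach-space inverse function theorem. First I would normalize the problem by a pair of reductions that cost nothing. Translating in both domain and target, I may assume $\overline\bx=\bc$ and $\bbF(\overline\bx)=\bc$. Since $\bbF'(\bc)$ is a linear homeomorphism of $\bX$ onto $\bY$, its inverse $\bbF'(\bc)^{-1}$ is a bounded linear map; replacing $\bbF$ by $\bbF'(\bc)^{-1}\circ\bbF$ I may further assume $\bX=\bY$ and $\bbF'(\bc)=\id$. The property of being a local homeomorphism is unaffected by these operations, so it suffices to treat this normalized situation.

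Next I would recast the equation $\bbF(\bx)=\by$ as a fixed-point problem. Writing $\bG(\bx)=\bx-\bbF(\bx)$, so that $\bG(\bc)=\bc$ and $\bG'(\bc)=\bc$, solving $\bbF(\bx)=\by$ is equivalent to finding a fixed point of $\Phi_\by(\bx):=\by+\bG(\bx)$. Because $\bbF$ is $\C^1$, the map $\bx\mapsto\bG'(\bx)=\id-\bbF'(\bx)$ is continuous and vanishes at $\bc$; hence there is a radius $r>0$ with $\|\bG'(\bx)\|\le\tfrac12$ on the closed ball $\overline B(\bc,r)$. By the mean value inequality for maps between Banach spaces, applied along segments that lie in the convex ball $\overline B(\bc,r)$, the map $\bG$ is Lipschitz on $\overline B(\bc,r)$ with constant $\tfrac12$. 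This is the key estimate.

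With this in hand I would verify the two hypotheses of the Banach Fixed Point Theorem simultaneously. For $\|\by\|\le r/2$, the estimate $\|\Phi_\by(\bx)\|\le\|\by\|+\|\bG(\bx)-\bG(\bc)\|\le r/2+\tfrac12\|\bx\|\le r$ shows that $\Phi_\by$ maps the complete metric space $\overline B(\bc,r)$ into itself, while $\|\Phi_\by(\bx_1)-\Phi_\by(\bx_2)\|\le\tfrac12\|\bx_1-\bx_2\|$ shows it is a contraction. Therefore $\Phi_\by$ has a unique fixed point $\bx=\bbF^{-1}(\by)$, which moreover satisfies $\|\bx\|\le 2\|\by\|<r$. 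Thus $\bbF$ restricts to a bijection of the open set $\bU=B(\bc,r)\cap\bbF^{-1}(B(\bc,r/2))$ onto the open neighborhood $B(\bc,r/2)$ of $\bc=\bbF(\overline\bx)$. Continuity of the inverse is then immediate: subtracting the fixed-point identities $\bx_i=\by_i+\bG(\bx_i)$ and using the Lipschitz bound on $\bG$ yields $\|\bbF^{-1}(\by_1)-\bbF^{-1}(\by_2)\|\le 2\|\by_1-\by_2\|$. Since the statement only claims a homeomorphism, this Lipschitz continuity of $\bbF^{-1}$ is all that is required; undoing the normalizations then delivers the local homeomorphism onto a neighborhood of $\bbF(\overline\bx)$.

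The main obstacle I anticipate is organizational rather than deep: pinning down a single radius $r$, together with the companion target radius $r/2$, for which the contraction property and the self-mapping property hold together, which forces the constants appearing in the mean-value estimate to be chosen compatibly. A secondary point deserving care is the legitimate invocation of the mean value inequality in the Banach-valued setting, since it requires the segment joining the two points to remain inside the region on which the derivative bound $\|\bG'\|\le\tfrac12$ is valid; this is precisely why working on a convex ball, rather than an arbitrary neighborhood, is essential.
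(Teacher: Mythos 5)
Your argument is correct and complete; the contraction-mapping route you take is the standard textbook proof of the Banach-space Inverse Function Theorem, and all the estimates check out (the normalization $\bbF'(\bc)=\id$, the choice of $r$ with $\|\bG'\|\le\tfrac12$ on the convex ball so that the mean value inequality applies, the simultaneous verification of the self-mapping and contraction properties for $\|\by\|\le r/2$, and the Lipschitz bound $\|\bbF^{-1}(\by_1)-\bbF^{-1}(\by_2)\|\le 2\|\by_1-\by_2\|$). The only point of comparison to make is that the paper does not prove this statement at all: Theorem \ref{bas2} is quoted as a classical result, with a pointer to page 113 of Berger's \emph{Nonlinearity and Functional Analysis}, and is then used as a black box in the proof of Theorem \ref{importante}. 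So you have supplied a proof where the paper supplies a citation; what your version buys is self-containedness, and as a bonus it yields slightly more than the stated conclusion (a local Lipschitz inverse, and in fact with the $\C^1$ hypothesis one could go on to show the inverse is differentiable, though the statement only asks for a homeomorphism).
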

The second one is a version of Sard's theorem for
infinite-dimensional spaces (page 125 of \cite{berger}).
\begin{theorem}\label{bas3}
Let $\F$ be a $\C^q$-Fredholm mapping of a separable Banach space
$\X$ into a separable Banach space $\Y$. If $q>\max(\hbox{index
}\F, 0)$, the set of singular values of $\F$ are no-where dense (its
closure has empty interior) in $\Y$.
\end{theorem}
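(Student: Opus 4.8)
The plan is to reduce this infinite-dimensional assertion to the classical (finite-dimensional) Sard theorem by exploiting the local normal form of a Fredholm map, and then to assemble the local conclusions using the separability of $\bX$. Since $\bX$ is separable it is Lindel\"of, so it can be covered by countably many of the coordinate neighborhoods produced below; the singular values then form a countable union of the corresponding local singular-value sets, and a countable union of nowhere dense sets is of the first category. It therefore suffices to prove the statement locally: every $\overline\bx\in\bX$ admits a neighborhood on which the image of the singular set is (relatively) closed and nowhere dense.

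Fix $\overline\bx$ and set $T=\F'(\overline\bx)$. Because $\F$ is Fredholm, $K=\ker T$ is finite dimensional, say $\dim K=k$, and $\operatorname{range}T$ is closed with finite-dimensional complement $C$, $\dim C=c$, so that $\operatorname{index}\F=k-c$. I would choose closed splittings $\bX=K\oplus\bX_1$ and $\bY=\operatorname{range}T\oplus C$, so that $T|_{\bX_1}:\bX_1\to\operatorname{range}T$ is a linear homeomorphism. Writing $\F=(\pi_R\F,\pi_C\F)$ for the two target components and considering the auxiliary map $(\xi,\eta)\mapsto(\xi,\pi_R\F(\overline\bx+\xi+\eta))$ from $K\oplus\bX_1$ to $K\oplus\operatorname{range}T$, its derivative at the origin is $(\xi,\eta)\mapsto(\xi,T\eta)$, a linear homeomorphism; by the Inverse Function Theorem (Theorem \ref{bas2}) it is a local $\C^q$-diffeomorphism. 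Using it as a new chart in the source, and the splitting in the target, $\F$ acquires the local normal form $(\xi,w)\mapsto(w,h(\xi,w))$ with $\xi\in K$, $w\in\operatorname{range}T$ and $h$ a $\C^q$-map into $C$.

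In this normal form the derivative is surjective exactly when $\partial_\xi h(\xi,w):K\to C$ is onto; hence $(\xi,w)$ is a singular point precisely when $\xi$ is a critical point of the finite-dimensional map $h_w:=h(\cdot,w):\R^k\to\R^c$, and the associated singular value is $(w,h_w(\xi))$. The hypothesis $q>\max(\operatorname{index}\F,0)=\max(k-c,0)$ is exactly the differentiability required by the classical Sard theorem for maps $\R^k\to\R^c$, so for every fixed $w$ the set of critical values of $h_w$ has Lebesgue measure zero in $C\cong\R^c$; thus each $w$-slice of the local singular-value set has measure zero.

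To pass from this fiberwise measure-zero statement to nowhere density, I would restrict $\xi$ to a closed ball $\overline B\subset K$, which is compact because $\dim K<\infty$, and $w$ to a bounded open $V\subset\operatorname{range}T$. The normal-form map is then proper on $\overline B\times V$ (the preimage of a compact set is a closed subset of $\overline B$ times a compact set in $\operatorname{range}T$, hence compact), while its critical set is closed, since non-surjectivity between fixed finite-dimensional spaces is a closed condition; therefore the image of the critical set is closed in the target. A closed set all of whose $w$-slices have measure zero in $C$ has empty interior, for an interior point would supply a nonempty open slice of measure zero in $\R^c$; hence this image is nowhere dense. Covering $\bX$ by countably many such charts and taking the union shows that the singular values form a set of the first category, whose complement, the regular values, is residual and dense by Baire's theorem, as claimed. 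The main obstacle is the construction of the local normal form: one must arrange the two splittings and apply the Inverse Function Theorem so that criticality of $\F$ is faithfully converted into criticality of the genuinely finite-dimensional maps $h_w$. Once this reduction is in place, the classical Sard theorem together with the local properness of Fredholm maps delivers the rest.
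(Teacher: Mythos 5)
The paper does not prove Theorem \ref{bas3}: it is quoted as a known result (the Smale--Sard theorem) with a pointer to page 125 of \cite{berger}, so there is no internal proof to compare yours against. What you have written is, in substance, Smale's original argument, and it is essentially correct: the splittings $\bX=K\oplus\bX_1$ and $\bY=\operatorname{range}T\oplus C$, the auxiliary map whose derivative is $(\xi,\eta)\mapsto(\xi,T\eta)$, and the Inverse Function Theorem give the standard local normal form $(\xi,w)\mapsto(w,h(\xi,w))$; the identification of singular points of $\F$ with critical points of the finite-dimensional maps $h_w:\R^k\to\R^c$ is right; and $q>\max(k-c,0)$ is exactly the smoothness the classical Sard theorem requires, so the $w$-slices of the local singular-value set have measure zero.

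Two points deserve care. First, in the properness step you should confine $w$ to a \emph{closed} bounded set $\overline V$ rather than an open $V$: the preimage of a compact set $L$ is then a closed subset of $\overline B\times\bigl(\pi_1(L)\cap\overline V\bigr)$, hence compact, whereas with $V$ open the preimage is only relatively closed and compactness can fail near $\partial V$. This is the standard ``Fredholm maps are locally proper'' lemma and is easily repaired. Second, and more substantively, your local-to-global step yields that the singular values form a countable union of closed nowhere dense sets, i.e.\ a set of the first category; the statement you are proving (copied from Berger) asserts they are nowhere dense, and these are not the same, since a countable union of closed nowhere dense sets can be dense. The honest conclusion of this argument --- and of Smale's theorem --- is that the regular values are residual, hence dense because $\bY$ is a Baire space. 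That weaker conclusion is all the surrounding text actually uses (to choose $\bv_\e$ so that $-\bv_\e$ is a regular value arbitrarily close to $\bc$), but as a proof of the literal statement you should either weaken ``nowhere dense'' to ``first category'' or add a global properness hypothesis.
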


The proof of Theorem \ref{central}, through Lemmas \ref{importante} and \ref{eimportante}, will make use of Proposition
\ref{bas1}, and Theorems \ref{bas2} and \ref{bas3}, in addition to standard properties of compact operators. 

\subsection{Main proofs}
We are now in a position to prove  Lemma \ref{importante}. We begin by checking that the perturbation $E_\e$, for fixed $\e$, in
\eqref{perturbacion} complies with the Palais-Smale property, regardless of how the vector $\bv_0$ is selected : if $\{\bu_j\}$ is a sequence in $\H$ such that
$$
E_\e(\bu_j)\hbox{ is bounded},\quad E'_\e(\bu_j)\to\bc,
$$
then some subsequence of $\{\bu_j\}$ converges in $\H$.
Note that, since $E_\e$ is coercive in $\H$, we can replace the boundedness of $E_\e$ along the
sequence $\{\bu_j\}$ by the uniform boundedness of $\{\bu_j\}$ in
$\H$. 

Note first that each $E_\epsilon$ is coercive for fixed $\epsilon$. 
On the other hand, 
\begin{equation}\label{dercom}
E'_\e=E_0'+\e\id-\e\bv_0,
\end{equation}
where $\id:\H\to\H$ is the identity operator.

Suppose $\{\bu_j\}$ is uniformly bounded. Since $E'_0$ is assumed to be compact, there is a subsequence $\bu_j$ (not relabelled)
such that 
$$
E'_0(\bu_j)\to\overline\bu,\quad \overline\bu\in\H.
$$ 
To check the Palais-Smale conditions, if $E'_\varepsilon(\bu_j)\to\bcero$, due to
\eqref{dercom}, 
we have
$$
\varepsilon\bu_j=E'_\varepsilon(\bu_j)-E_0'(\bu_j)+\e\bv_0\to
-\overline\bu+\e\bv_0\hbox{ as }j\to\infty.
$$
Hence $\{\bu_j\}$ converges strongly in $\H$. This is exactly the
required property for each $E_\e$.

Consider now the functional 
$$
\tilde E_\e:\H\to\R,\quad \tilde E_\e(\bu)=E_0(\bu)+\frac\e2\|\bu\|^2.
$$
Its derivative 
$$
\bbF_\e(\bu)\equiv\tilde E'_\e(\bu)=\e\bu+E'_0(\bu)
$$ 
is the sum of a diffeomorphism, $\e\id$, and a compact operator, $E'_0$. By
Proposition \ref{bas1}, this derivative is a Fredholm operator of
index zero. By Theorem \ref{bas3}, the set of critical values of the
derivative $\bbF_\e$, that is
$$
\bbC_\e\equiv\{\bbF_\e(\bu)\in \H: \bbF'_\e(\bu)=\bc\},
$$
is no-where dense. The union 
$$
\bbC\equiv\cup_{\e>0}\bbC_\e
$$
is thus a meager or first-category subset. Notice here that we can, without loss of generality, restrict attention to some appropriate sequence of values for $\e$ and work henceforth with such a sequence. For the sake of simplicity, we will keep using $\e$ without indicating explicitly $\{\e_j\}$, and hope that this will not create confusion. After all, our main perturbation argument is valid if applied to some sequence of functionals $\{E_\e\}$ for some sequence of values for $\e$ converging to zero. 

Since $\H$ is a complete metric space, the classical Baire category theorem implies that $\bbC$ has empty interior, and consequently, we can choose an element
$$
\bv_0\in\V\setminus\bbC,
$$
with the properties claimed in the statement of
the theorem, so that every solution $\bu$ of the family of equations
$$
\bbF_\e(\bu)+\bv_0=\bc
$$
cannot be a singular point for none of the $\bbF'_\e$s, i.e. 
$$
\bbF'_\e(\bu)=\tilde
E''_0(\bu)
$$ 
is bijective, and so $\bu$ is non-degenerate. This
argument implies indeed that the  critical points of $E_\e$ are
non-degenerate, once $\bv_0$ has been chosen in this way and has
been added to $\tilde E_\e$, because 
$$
E''_\e(\bu)=\tilde
E''_\e(\bu).
$$

The Inverse Function Theorem \ref{bas2} implies directly that non-degenerate
critical points of a $\C^2$-functional $E_\e$ are isolated. 

Finally, we argue why the number of critical points in sets of the form $\{ E_\e\le\alpha\}$ is finite. 
Indeed, if we 
let $\alpha$ be a positive real number and assume that there is
an infinite number $\{\bu_j\}$ of  critical points with
$$
E_\e(\bu_j)\le \alpha,\quad E'_\e(\bu_j)=\bc,
$$ 
the Palais-Smale condition for $E_\e$ would ensure the existence of a suitable
subsequence converging to some $\overline\bu$ which would be a
critical, non-isolated  point. This is a contradiction with the
previous statement about the fact that the critical points are isolated, and
so the number of such critical points has to be finite. This completes
the proof of Lemma \ref{importante}

The proof of Lemma \ref{eimportante} relies on the standard fact that eigenvalues of a linear,
self-adjoint, compact operator in a Banach space, like $E''_0(\bu)$,
always has a sequence of (real) eigenvalues converging to zero (see,
for instance, Chapter 6 of \cite{brezis}). By differentiating in 
\eqref{dercom}, 
$$
E''_\e(\bu)=E''_0(\bu)+\e\id,
$$
and hence eigenvalues of $E''_\e(\bu)$ are eigenvalues of
$E''_0(\bu)$ plus $\e$. The conclusion is that there cannot be an infinite number of
negative eigenvalues.

Theorem \ref{central} is then proved.

\subsection{Perturbation and critical points}
Let $E_0:\H\to\R$ be a $\cC^1$-, convex functional, bounded from below $E_0\ge M$ for some $M\in\R$, and let $E:\H\to\R^+$ be a $\cC^1$-, strictly convex functional. In most situations of interest, one would take
$$
E(\bu)=\frac12\|\bu\|^2\hbox{ or } E(\bu)=\frac12\|\bu-\bv_0\|^2
$$
for a fixed vector $\bu_0\in\H$. We will consider the perturbed functional
$$
E_\e:\H\to\R,\quad E_\e(\bu)=E_0(\bu)+\e E(\bu),
$$
which turns out to be strictly convex for every positive $\e$. As such it admits a unique minimizer $\bu_\e$ which is determined as the unique solution of the critical point equation
$$
E'_0(\bu_\e)+\e E'(\bu_\e)=\bcero.
$$
This is standard. 

\begin{proposition}\label{convderivada}
In the situation just described, $E'_0(\bu_\e)\to\bcero$ in $\H$. 
\end{proposition}
\begin{proof}
Let $\bu\in\H$ be an arbitrary vector. Because $\bu_\e$ is a minimizer for $E_\e$, we have that
$$
E_0(\bu_\e)\le E_\e(\bu_\e)=E_0(\bu_\e)+\e E(\bu_\e)\le E_0(\bu)+\e E(\bu).
$$
If we take limits in $\e$, we conclude that
$$
\liminf_{\e\to0} E_0(\bu_\e)\le \limsup_{\e\to0} E_0(\bu_\e)\le E_0(\bu),
$$
and the arbitrariness of $\bu$ leads to
$$
\inf_\H E_0\le \liminf_{\e\to0} E_0(\bu_\e)\le \limsup_{\e\to0} E_0(\bu_\e)\le \inf_\H E_0.
$$
Hence $\{\bu_\e\}$ is minimizing for $E_0$, and because $E_o$ is bounded from below,  $E'_0(\bu_\e)\to\bcero$ (even if $\|\bu_\e\|\to\infty$).
\end{proof}

The convexity of the base functional $E_0$ looks unavoidable in this proof to ensure the uniqueness of minimizers $\bu_\e$, yet a stronger result is possible with no reference to convexity. Recall that statements concerning $\e\to0$ exactly means that it holds along some sequence of values of $\e$. This suffices for our purposes, as has been indicated earlier in this section. 
\begin{theorem}\label{ultimoz}
Let $E_0:\H\to\R$ be a $\cC^1$-functional, and let $\{\bu_\e\}$ be a branch of critical points for the family of functionals
$$
E_\e(\bu)=E_0(\bu)+\frac{\e}2\|\bu-\bv_0\|^2
$$
for a fixed element $\bv_0\in\H$, i.e.
\begin{equation}\label{eccri}
E_0'(\bu_\e)+\e(\bu_\e-\bv_0)=\bcero
\end{equation}
for every positive $\e$ sufficiently small. Then: 
\begin{enumerate}
\item $E'_0(\bu_\e)\to\bcero$ in $\H$ as $\e\to0$; and
\item there is no such branch with
\begin{equation}\label{condiciones}
E_0(\bu_\e)\to0,\quad E_\e(\bu_\e)>a,
\end{equation}
for a fixed value $a>0$.
\end{enumerate}
\end{theorem}
\begin{proof}
The first observation is that the branch $\{\bu_\e\}$ is differentiable with respect to $\e$ as a consequence of the differentiability of $E_0$ and the smooth dependence of solutions of critical equations with respect to parameters. Let $\delta>0$ be fixed. Then
$$
E_\delta(\bu_\delta)-\lim_{\e\to0}E_\e(\bu_\e)=\int_0^\delta \frac d{d\e}[E_\e(\bu_\e)]\,d\e.
$$
The differentiation (with respect to $\e$) under the integral sign together with \eqref{eccri} leads immediately to
$$
E_\delta(\bu_\delta)-\lim_{\e\to0}E_\e(\bu_\e)=\int_0^\delta \frac12\|\bu_\e-\bv_0\|^2\,d\e.
$$
We will express this identity in the form
\begin{equation}\label{importantezz}
\lim_{\e\to0}\left[E_0(\bu_\e)+\frac\e2\|\bu_\e-\bv_0\|^2\right]+\int_0^\delta\frac12\|\bu_\e-\bv_0\|^2\,d\e=E_0(\bu_\delta)+\frac\delta2\|\bu_\delta-\bv_0\|^2.
\end{equation}
From this basic equality we will conclude the two claimed facts.

Since all terms involved in \eqref{importantezz} are non-negative, we see that
$$
\lim_{\e\to0}\frac\e2\|\bu_\e-\bv_0\|^2\le E_\delta(\bu_\delta).
$$
On the other hand, \eqref{eccri} implies that both vectors $E'_0(\bu_\e)$ and $\bu_\e-\bv_0$ are co-linear and, in addition,
$$
\lim_{\e\to0}-\frac12 E'_0(\bu_\e)\cdot(\bu_\e-\bv_0)=\lim_{\e\to0}\frac\e2\|\bu_\e-\bv_0\|^2\le E_\delta(\bu_\delta).
$$
Since the upper bound on the right-hand side is independent of $\e$, if $\|\bu_\e-\bv_0\|$ converges to infinity, then $E'_0(\bu_\e)$ must converge necessarily to zero. If $\bu_\e-\bv_0$ is uniformly bounded, then \eqref{eccri} clearly implies that $E'_0(\bu_\e)$ converges to zero as well. At any rate, $E'_0(\bu_\e)\to\bcero$ in $\bbH$. 

Concerning our second point, suppose, seeking a contradiction, that we could find a branch of critical paths $\{\bu_\e\}$, complying with \eqref{eccri}, for which \eqref{condiciones} holds for some fixed $a>0$. Then
$$
0<a\le \lim_{\e\to0}E_\e(\bu_\e)=\lim_{\e\to0}\left[E_0(\bu_\e)+\frac\e2\|\bu_\e-\bv_0\|^2\right]=\lim_{\e\to0}\frac\e2\|\bu_\e-\bv_0\|^2,
$$
and \eqref{importantezz} leads to
$$
a+\int_0^\delta\frac12\|\bu_\e-\bv_0\|^2\,d\e\le E_0(\bu_\delta)+\frac\delta2\|\bu_\delta-\bv_0\|^2.
$$
If $\|\bu_\e-\bv_0\|^2$ tends to infinity (as $\e\to0$), then for some selected sequence of values for $\delta$ tending to zero, we should have
$$
\frac{\delta}2\|\bu_\delta-\bv_0\|^2\le \int_0^\delta\frac12\|\bu_\e-\bv_0\|^2\,d\e.
$$
Taking this fact into the previous inequality, we realize that
$$
a+\frac{\delta}2\|\bu_\delta-\bv_0\|^2\le E_0(\bu_\delta)+\frac\delta2\|\bu_\delta-\bv_0\|^2,
$$
and $a\le E_0(\bu_\delta)$ for some sequence of values $\delta$ tending to zero. This is a contradiction with \eqref{condiciones}. If, on the other hand, $\|\bu_\e-\bv_0\|^2$ is uniformly bounded, then  as before
$$
0<a\le\lim_{\e\to0} E_\e(\bu_\e)=\lim_{\e\to0} E_0(\bu_\e)=0,
$$
which is again a contradiction. 
\end{proof}
\section{General results}
We start specifying the nature of some ingredients of spaces and functionals for Hilbert's 16th problem.  As already indicated in a previous section, our basic Hilbert spaces are
$$
\L=H^1_O([0, 1]; \R^2)
$$
of continuous, periodic paths, $\bu(0)=\bu(1)$, with a weak first derivative $\bu'(t)$ which is square-integrable in the unit interval
$$
\int_0^1|\bu'(t)|^2\,dt<\infty;
$$
and 
$$
\H=\espacioo
$$
of $\cC^1$-periodic paths with a weak second derivative $\bu''(t)$ which is square-integrable
$$
\int_0^1|\bu''(t)|^2\,dt<\infty,
\quad\bu(0)=\bu(1),\quad \bu'(0)=\bu'(1).
$$
There are three main points that require our attention:
\begin{enumerate}
\item Isolate an appropriate subset $\bbO$ of $\espacioo$ in which the image set of paths  in $\R^2$ are essentially identified with the paths themselves. Basically, we would like to avoid the possibility that such image sets are run more than once either counter- or clockwise. 
\item Describe the differential systems that critical paths of local, integral functionals of first- and second-order ought to verify.
\item Establish the connection between critical paths of integral functionals under fixed end-point conditions and periodic conditions.
\end{enumerate}

\subsection{Some analytical preliminaries}\label{prelim}
We briefly state here some basic notions about spaces of functions with weak derivatives having suitable integrability properties, as well as recalling again concepts like the coercivity of a functional. It may be convenient to do so for some interested readers not familiar with these concepts. 
We refer to \cite{brezis} for a main, accesible source in this regard, and much more related information. 

The underlying natural Hilbert space for $E_\e$ is
\begin{gather}
\espacio=\left\{(x, y):[0, 1]\to\R^2:\right.\nonumber\\
\left.\int_0^1[x^2+y^2+(x')^2+(y')^2+(x'')^2+(y'')^2]\,dt<\infty\right\}.\nonumber
\end{gather}
This is nothing but the classical Sobolev space 
of paths with square-integrable weak derivatives up to order two. The
inner product in this space is
$$
\langle(x_1, y_1), (x_2,
y_2)\rangle=\int_0^1(x_1x_2+y_1y_2+x'_1x'_2+y'_1y'_2+x''_1x''_2+y''_1y''_2)\,dt,
$$
and the associated norm
$$
\|(x, y)\|^2=\int_0^1[x^2+y^2+(x')^2+(y')^2+(x'')^2+(y'')^2]\,dt.
$$
Norms and inner products occurring henceforth are meant to be these.
Paths in $\espacio$ have continuous first derivatives. Since parameterizations of limit cycles as integral curves of the corresponding polynomial differential system, suitably normalized to the unit interval, are $\C^\infty$, they belong to this space.

We recall that coercivity for a general functional $E$ defined in a Hilbert space $\H$ means that
$$
E(\bu)\to+\infty\quad\hbox{ as }\quad\|\bu\|\to\infty\hbox{ with }\bu\in\H.
$$
If a functional $E_0$ defined in $\espacio$ is non-negative, the perturbation
$$
E_\e(\bu)=E_0(\bu)+\frac{\e}2\|\bu-\bv_0\|^2
$$
automatically becomes coercive for every positive $\e$, and every fixed element $\bv_0$. 

\caja{
To summarize in a compact form, and anticipate our analytical framework, we will concentrate on the family of functionals
$$
E_\e:\espacioo\to\R^+
$$
where
\begin{gather}
\espacioo=\{\bu\in\espacio: \bu(0)=\bu(1), \bu'(0)=\bu'(1)\},\nonumber\\
\langle\bu, \bv\rangle=\int_0^1(\bu(t)\cdot\bv(t)+\bu'(t)\cdot
\bv'(t)+\bu''(t)\cdot\bv''(t))\,dt,\nonumber\\
\|\bu\|^2=\|\bu\|^2_{H^2([0, 1]; \R^2)}=
\int_0^1\big(|\bu''(t)|^2+|\bu'(t)|^2+ |\bu(t)|^2\big)\,dt,\nonumber\\
E_0(\bu)=\frac12\int_0^1 (\bF^\perp(\bu)\cdot\bu')^2\,dt,\nonumber\\
E_\e(\bu)=E_0(\bu)+\frac\e2\|\bu\|^2-\langle\bu,
\bv_0\rangle+\frac\e2\|\bv_0\|^2=E_0(\bu)+\frac\e2\|\bu-\bv_0\|^2,\nonumber
\end{gather}
and
\begin{equation*}\label{primsisdin}
\begin{array}{c}
\bu=(x, y), \quad \bF(\bu)=(P(x, y), Q(x, y)),\\
\bF^\perp(\bu)= (-Q(\bu),P(\bu)).
\end{array}
\end{equation*}
}
Once again, a fundamental fact for us to bear in mind is that paths in $\espacioo$ are $\C^1$, and convergence in $\espacio$ implies uniform convergence of first derivatives (\cite{brezis}). The following is a more precise statement of Proposition \ref{versioncero}, that we will use for future reference. 
\begin{proposition}\label{convergenciauniforme}
Paths in $\espacio$ are $\C^1$. 
If $\bu_j\to\bu$ in $\espacio$, then 
$$
\bu_j\to\bu, \quad \bu'_j\to\bu'
$$
uniformly in $[0, 1]$. 
\end{proposition}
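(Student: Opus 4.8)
The plan is to establish the continuous Sobolev embedding $\espacio\hookrightarrow\C^1([0,1];\R^2)$ and to deduce both assertions from it, noting that the periodicity constraints defining $\espacioo$ merely single out a closed subspace on which the embedding is inherited. First I would reduce to the scalar, one-dimensional case: since the norm on $\espacio$ controls each component together with its derivatives separately, it suffices to show that a scalar $u\in H^2([0,1])$ has a $\C^1$ representative with $\|u\|_{\C^1([0,1])}\le C\|u\|$ for a universal constant $C$. The core ingredient is the classical embedding $H^1([0,1])\hookrightarrow\C^0([0,1])$: any $w\in H^1([0,1])$ agrees a.e.\ with an absolutely continuous function satisfying $w(s)-w(t)=\int_t^s w'(\tau)\,d\tau$, whence Cauchy--Schwarz gives $|w(s)-w(t)|\le|s-t|^{1/2}\|w'\|_{L^2}$. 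Thus $w$ is (H\"older) continuous, and combining this modulus of continuity with control of one value of $w$ through $\int_0^1 w\,dt$ yields the bound $\|w\|_{\C^0}\le C\|w\|_{H^1}$.

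Next I would apply this estimate twice. For $u\in H^2([0,1])$ both $u$ and $u'$ lie in $H^1([0,1])$, so each admits a continuous representative and $\|u\|_{\C^0},\|u'\|_{\C^0}\le C\|u\|$; hence $u$ has a continuously differentiable representative, i.e.\ $u\in\C^1([0,1])$. Applied componentwise this proves that paths in $\espacioo$ are $\C^1$. The convergence statement then follows from linearity: if $\bu_j\to\bu$ in $\espacio$, applying the embedding estimate to $\bu_j-\bu$ gives $\|\bu_j-\bu\|_{\C^0}+\|\bu_j'-\bu'\|_{\C^0}\le C\|\bu_j-\bu\|\to0$, which is precisely uniform convergence of $\bu_j\to\bu$ and $\bu_j'\to\bu'$ on $[0,1]$.

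I do not anticipate a genuine obstacle, since this is a standard Sobolev embedding result (\cite{brezis}); the only point requiring care is that elements of $\espacio$ are equivalence classes, so the assertion that paths are $\C^1$ must be read as the existence of a $\C^1$ representative, and it is the continuity of the embedding, encoded in the estimate $\|\cdot\|_{\C^1}\le C\|\cdot\|$, that upgrades $H^2$-convergence to uniform convergence of paths and of their first derivatives.
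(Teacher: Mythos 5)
Your proof is correct and follows the standard route the paper itself relies on: the paper states this proposition without proof, simply citing \cite{brezis} for the Sobolev embedding $\espacio\hookrightarrow\C^1([0,1];\R^2)$, which is exactly what you establish by applying the $H^1\hookrightarrow\C^0$ estimate to $u$ and $u'$ componentwise. Your argument fills in the details the paper omits, and is complete.
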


\subsection{Ambient set for our analysis}\label{vallee}
One fundamental ingredient of our analysis focuses on isolating a suitable increasing family of subsets 
$$
\bbO_d\subset\espacioo, \quad d\in\N,
$$
where images of paths cannot admit different reparameterizations covering such images more than once either counter- or clockwise. 
To make this idea rigorous, we will rely on important concepts and results in \cite{whitney}. Though some of those are rather classical, for the sake of readers we state them here with some care. 

\begin{definition}\label{whit}
A planar, parametrized regular closed curve is a continuously differentiable mapping 
$$
\bu(t):[0, 1]\to\R^2
$$
such that
$$
\bu(0)=\bu(1), \bu'(0)=\bu'(1),\quad |\bu'(t)|>0\hbox{ for all }t\in[0, 1].
$$
\end{definition}
Note how every parametrized regular curve belongs to $\espacioo$ according to Proposition \ref{convergenciauniforme}. 

For such a regular curve $\bu$, we can consider the normalized tangent vector
$$
\bn(t)=\frac1{|\bu'(t)|}\bu'(t):[0, 1]\to\bbS.
$$
\begin{definition}
\begin{enumerate}
\item The winding number of a regular curve $\bu$ is the total number of full turns, taking into account its sense, of the normalized tangent vector $\bn$ in the unit circle $\bbS$ as $t$ runs in the unit interval. 
\item The absolute winding number is the number of full turns, regardless of whether they are clock- or counterclockwise, that $\bn$ runs around $\bbS$ as $t$ runs through $[0, 1]$. 
\end{enumerate}
\end{definition}
In this way, a regular curve $\bu$ could have winding number $+1$, and yet its absolute winding number could be larger or even much larger. The point is that the tangent vector $\bn(t)$ turns fully around as many times clockwise as counter-clockwise plus one. 

The main result from \cite{whitney} shows that two regular curves can be continuously deformed into each other if and only if they share the winding number. 

\begin{definition}
We put $\bbO_d$ for the subset of $\espacioo$ of regular curves with a nowhere-null tangent vector, winding number $+1$, and absolute winding number not greater than $d$. 
\end{definition}

\begin{proposition}\label{bbO}
$\bbO_d$ is an open subset of $\espacioo$. 
\end{proposition}
\begin{proof}
Note that convergence in $\espacioo$ implies uniform convergence of tangent vectors (Proposition \ref{convergenciauniforme}). This is standard. 
\end{proof}
The following fact may help in better understanding the absolute winding number of paths. 
\begin{proposition}\label{mollifier}
Suppose $\bu_j\in\partial\bbO_{d_j}$ with $d_j\to\infty$ as $j\to\infty$, and that
$$
\bu_j+\delta\bU_j\in\bigcup_{r>d_j}\bbO_r
$$
for all $\delta>0$ and $j$ large. Then $\{\bU_j\}$ cannot converge strongly in $\espacioo$.
\end{proposition}
\begin{proof}
Our main hypothesis means that $\bU_j$:
\begin{enumerate}
\item is capable of pushing $\bu_j$ to run beyond full rounds in smaller and smaller subintervals of $[0, 1]$ (because $d_j\to\infty$); and
\item $\bU_j$ must take on larger and larger values because the multiple $\delta$ can be arbitrarily small. 
\end{enumerate}
The combined effect implies that $\{\bU_j\}$ should develop concentration effects, and hence it cannot converge strongly in $\espacioo$.
\end{proof}

\subsection{Critical paths of functionals}

The object of this subsection is to derive and study the differential equations
which must satisfy critical closed paths of local, integral functionals (like $E_\e$) in
$\espacioo$. The proof uses standard ideas in the Calculus of Variations,
but we include them because their understanding is crucial for our counting procedure. 
We first discuss first-order problems as a preliminary step to gain some familiarity with the underlying techniques, and then focus on second-order problems. 

\subsubsection{First-order problems}\label{firstorder}
Let 
$$
F(t, \mathbf{u}, \bz):[0,
1]\times\R^2\times\R^2\to\R
$$ 
be a $(\C^\infty$-) function
with respect to $(t, \bu, \bz)$, with partial derivatives
$F_\bu$, $F_\bz$. The associated functional is
$$
E(\bu)=\int_0^1 F(t, \bu(t), \bu'(t))\,dt.
$$
It is important for us to understand the role played by end-point conditions when looking for critical paths of $E$. Specifically, we will proceed in three successive steps:
\begin{enumerate}
\item the most general and standard situation is to look for critical paths under fixed end-point conditions, so that feasible paths $\bu(t)$ are such that
$$
\bu(t)\in H^1_{\bu_0, \bu_1}([0, 1]; \R^2)=\{\bv\in H^1([0, 1]; \R^2): \bv(0)=\bu_0, \bv(1)=\bu_1\},
$$
for $\bu_0, \bu_1\in\R^2$ arbitrary but fixed vectors;
\item in the second step we just take $\bu_0=\bu_1$, a fixed vector, and admissible paths are
$$
\bu(t)\in H^1_{\bu_0}([0, 1]; \R^2)=\{\bv\in H^1([0, 1]; \R^2): \bv(0)=\bv(1)=\bu_0\};
$$
\item in the final step we demand $\bu(0)=\bu(1)$ but this vector is free, and feasible paths are
$$
\bu(t)\in\espacioof.
$$
\end{enumerate}

Assume that 
$$
F_{\mathbf{u}}(t, \mathbf{v}, \mathbf{v}')
$$ 
belongs to $L^1((0, 1); \R^2)$ for every feasible $\bv$, according to the situation considered. 

\begin{theorem}\label{basicfirst}
Suppose that the functional
$$
E(\bu)=\int_0^1 F(t, \mathbf{u}(t), \mathbf{u}'(t))\,dt
$$
admits a  critical closed path $\bu(t):[0, 1]\to\R^2$ either in:
\begin{enumerate}
\item $H^1_{\bu_0, \bu_1}([0, 1]; \R^2)$; or
\item $H^1_{\bu_0}([0, 1]; \R^2)$; or
\item $\espacioof$.
\end{enumerate}
Then the function $F_\bz(t, \mathbf{u}, \mathbf{u}')$
is absolutely continuous in $(0, 1)$,
\begin{equation}\label{aaaf}
-\frac d{dt}F_\bz(t, \bu,
\bu')+F_\bu(t, \bu, \bu')=\bcero \,\,\,\hbox{for a.e.
$t$ in }(0, 1),
\end{equation}
and:
\begin{enumerate}
\item $\bu(0)=\bu_0$, $\bu(1)=\bu_1$; or
\item $\bu(0)=\bu(1)=\bu_0$; or
\item the function $F_\bz(t, \mathbf{u}, \mathbf{u}')$
is absolutely continuous in $[0, 1]$, i.e.
\begin{equation}\label{saltoooef}
[F_\bz(t, \bu, \bu')]_{t=0}=\bcero,
\end{equation}
\end{enumerate}
respectively. 
\end{theorem}
\begin{proof}
Take 
$$
\bU\in H^1_{\bcero}([0, 1]; \R^2)=\{\bv\in H^1([0, 1]; \R^2): \bv(0)=\bv(1)=\bcero\},
$$ 
in such a way that the combination 
$\bu+\delta\bU$ is feasible for every real $\delta$ if $\bu$ is (in each of the three situations considered). 
If $\bu$ is a  critical closed path of $E$, in any of the three cases,
then
$$
\left.\frac d{d\delta} E(\bu+\delta\bU)\right|_{\delta=0}=0,
$$
that is to say
$$
\left.\frac d{d\delta}\right|_{\delta=0}\int_0^1 F(t,
\bu(t)+\delta\bU(t), \bu'(t)+\delta\bU'(t))\,dt=0.
$$
This derivative has the form
\begin{equation}\label{eulerlagrangeef}
\int_0^1 \left[F_\bu(t, \bu, \bu', \bu'')\cdot \bU(t)+ F_\bz(t, \bu,
\bu', \bu'')\cdot \bU'(t)\right]\,dt=0.
\end{equation}
We consider the special subspace $\bbU$ of variations $\bU$ defined
by
$$
\bbU=\{\bU\in H^1([0, 1]; \R^2): \bU(0)=\bcero, \bU'\in\{1\}^\perp\},
$$
where $\{1\}^\perp$ is the orthogonal complement, in $L^2([0, 1];
\R^2)$, of the subspace generated by $\{1\}$. Since these
orthogonality conditions mean
$$
\int_0^1\bU'(t)\,dt=\bU(1)-\bU(0)=\bcero,
$$
we can also put
$$
\bbU=\{\bU\in H^1([0, 1]; \R^2): \bU(0)=\bU(1)=\bcero\}=H^1_{\bcero}([0, 1]; \R^2).
$$
We also set
$$
\Psi(t)=\int_0^t F_\bu(s, \bu(s), \bu'(s))\,ds,
$$
a continuous, bounded function by hypothesis. For $\bU\in\bbU$, an
integration by parts in the first term of (\ref{eulerlagrangeef})
yields
\begin{equation}\label{orthof}
\int_0^1\left[-\Psi(t)\cdot \bU'(t)+F_\bz(t, \bu, \bu')\cdot
\bU'(t)\right]\,dt=0,
\end{equation}
because 
$$
\left.\Psi(t)\bU(t)\right|_0^1=\bcero
$$
for test fields
$\bU\in\bbU$. 
Due to the arbitrariness of $\bU\in\bbU$, according to \eqref{orthof}
we conclude that
\begin{equation}\label{conclusionef}
F_\bz(t, \bu, \bu')-\Psi(t)=c\hbox{ in }(0, 1),
\end{equation}
with $c$, a constant. In particular, since $\Psi$ is absolutely
continuous (it belongs to $W^{1, 1}((0, 1);$ $ \R^2)$), we know that
$F_\bz(t, \bu, \bu')$ must  be absolutely continuous too in
$(0, 1)$, and, as such, it cannot have jumps in $(0, 1)$, though it
could possibly have at the endpoints. By differentiating once in
(\ref{conclusionef}) with respect to $t$,
\begin{equation}\label{ultimaf}
-\frac d{dt}F_\bz(t, \bu, \bu')+F_\bu(t, \bu, \bu')=0\hbox{ a.e. in }(0,
1).
\end{equation}
This covers end-point conditions for the two first situations. 

For the case of periodic end-point conditions, we take the information in \eqref{ultimaf} back to \eqref{eulerlagrangeef} for a general
$$
\bU\in\espacioof,
$$ 
not necessarily belonging to $\bbU$. One
integration by parts in the second term in (\ref{eulerlagrangeef})
yields
\begin{gather}
\int_0^1 F_\bz(t, \bu, \bu')\cdot \bU'(t)\,dt=-\int_0^1\frac
d{dt} F_\bz(t, \bu, \bu')\cdot\bU(t)\,dt\nonumber\\
+ [F_\bz(t, \bu, \bu')]_{t=0}\cdot \bU(0).\nonumber
\end{gather}
Recall the periodicity conditions for $\bU$. 
In this way, the left-hand side of  (\ref{eulerlagrangeef}) becomes
\begin{gather}
\int_0^1\left[-\frac d{dt}F_\bz(t, \bu,
\bu', \bu'')+F_\bu(t, \bu, \bu')\right]\cdot \bU(t)\,dt\nonumber\\
+[F_\bz(t, \bu, \bu')]_{t=0}\cdot \bU(0).\nonumber
\end{gather}
The integral here vanishes precisely by \eqref{ultimaf}, and so we are
only left with the contributions on the end-points. Hence, we obtain
\begin{equation}\label{extremossf}
[F_\bz(t, \bu, \bu')]_{t=0}\cdot \bU(0)=0.
\end{equation}
Since vector $\bU(0)$ can be chosen arbitrarily, 
we conclude that
\begin{equation}\label{jump1f}
[F_\bz(t, \bu, \bu')]_{t=0}=\bcero.
\end{equation}
This completes the proof of Theorem \ref{basicfirst}. 
Note that this
last condition implies that
$F_\bz(t, \bu, \bu')$
is absolutely continuous in the interval $[0, 1]$, including the
endpoints.
\end{proof}

The following simple example is enlightening. Let 
$$
\epsilon>0,\quad \bv(t): \R\to\R^2,\quad P(\bu):\R^2\to\R,
$$
be a positive number;  a smooth, one-periodic path; and a polynomial in two variables, respectively. We would like to apply the previous main result to the integrand
$$
F(t, \bu, \bz)=\frac\epsilon2|\bz-\bv(t)|^2+P(\bu).
$$
The corresponding functional is
$$
E(\bu)=\int_0^1 \left[\frac\epsilon2|\bu'(t)-\bv(t)|^2+P(\bu(t))\right]\,dt.
$$
In particular, we would like to clearly see the interplay between the two kinds of boundary conditions
$$
\bu(0)=\bu(1)=\bp
$$ 
for fixed vector $\bp$, and 
$$
\bu(0)-\bu(1)=\bcero.
$$ 
Note that the differential system in the open interval $(0, 1)$ is exactly the same in both cases, according to Theorem \ref{basicfirst}. The crucial difference lies on the end-point condition: in the first case, end-point conditions are imposed directly on feasible paths; in the periodic situation, critical paths must be $\cC^1$ in the full interval $[0, 1]$. Notice that this is so because 
$$
F_\bz=\epsilon(\bz-\bv(t))
$$
and $\bv$ is assumed to be smooth so that $\bv(0)=\bv(1)$. 

\subsubsection{Second-order problems}\label{aqui}
The treatment of second-order problems is formally the same. We first concentrate on periodic boundary conditions, and then focus on end-point conditions afterwards, as understanding the interplay between both for second-order problems is relevant for us. 

Let 
$$
F(t, \mathbf{u}, \bz, \bZ):[0,
1]\times\R^2\times\R^2\times\R^2\to\R
$$ 
be a $(\C^\infty$-) function
with respect to $(t, \bu, \bz, \bZ)$, with partial derivatives
$F_\bu$, $F_\bz$, $F_\bZ$. Assume that 
$$
F_{\mathbf{u}}(t, \mathbf{v}, \mathbf{v}',
\bv'')
$$ 
and
$$
F_{\mathbf{z}}(t, \mathbf{v}, \mathbf{v}',
\bv'')-\int_0^tF_{\mathbf{u}}(t, \mathbf{v}, \mathbf{v}', \bv'')\,ds
$$
belong to $L^1((0, 1); \R^2)$ for every feasible
$$
\mathbf{v}\in\espacioo.
$$ 

\begin{theorem}\label{basicsecond}
Suppose that the functional
$$
E(\bu)=\int_0^1 F(t, \mathbf{u}(t), \mathbf{u}'(t),
\mathbf{u}''(t))\,dt
$$
admits a  critical closed path 
$$
\mathbf{u}:[0, 1]\to\R^2\hbox{ in }\espacioo.
$$ 
Then the function
\begin{equation}\label{este}
\frac d{dt}F_\bZ(t, \mathbf{u}, \mathbf{u}', \bu'')-F_\bz(t,
\mathbf{u}, \mathbf{u}', \bu'')
\end{equation}
is absolutely continuous in $[0, 1]$, and
\begin{equation}\label{aaa}
\frac d{dt}\left(\frac{d}{dt}F_\bZ(t, \bu, \bu', \bu'')-F_\bz(t, \bu,
\bu', \bu'')\right)+F_\bu(t, \bu, \bu', \bu'')=\bcero \,\,\,\hbox{for a.e.
$t$ in }(0, 1).
\end{equation}
Moreover
\begin{equation}\label{saltoooe}
[F_\bZ(t, \bu, \bu', \bu'')]_{t=0}=\bcero.
\end{equation}
\end{theorem}

Brackets in \eqref{saltoooe} indicate the jump of the field inside at
the time indicated (difference between $t=1$, and $t=0$), that is
\begin{align}
[F_\bZ(t, \bu, \bu', \bu'')]_{t=0}=&\left.F_\bZ(t, \bu(t), \bu'(t),
\bu''(t))\right|_{t=1^-}\nonumber\\
&-\left.F_\bZ(t, \bu(t), \bu'(t),
\bu''(t))\right|_{t=0^+}.\nonumber
\end{align}

Notice that the integrability demanded on those combinations of
partial derivatives of $F$ in the statement of Theorem
\ref{basicsecond}  is equivalent to having 
$$
F_{\mathbf{u}}(t,
\mathbf{v}, \mathbf{v}', \bv'')\hbox{ and }F_{\mathbf{z}}(t, \mathbf{v},
\mathbf{v}', \bv'')
$$ 
integrable for every feasible
$$
\mathbf{v}\in\espacioo.
$$ 
We have however decided to keep the
statement as it is for that is exactly the form in which those
combinations of partial derivatives will occur in the proof. Our proof mimics that of Theorem \ref{basicfirst}. 

\begin{proof}[Proof of Theorem \ref{basicsecond}]
Take 
$$
\bU\in\espacioo.
$$ 
If $\bu$ is a  critical closed path of $E$,
then
$$
\left.\frac d{d\delta} E(\bu+\delta\bU)\right|_{\delta=0}=0,
$$
that is to say
$$
\left.\frac d{d\delta}\right|_{\delta=0}\int_0^1 F(t,
\bu(t)+\delta\bU(t), \bu'(t)+\delta\bU'(t),
\bu''(t)+\delta\bU''(t))\,dt=0.
$$
This derivative has the form
\begin{equation}\label{eulerlagrangee}
\int_0^1 \left[F_\bu(t, \bu, \bu', \bu'')\cdot \bU(t)+ F_\bz(t, \bu,
\bu', \bu'')\cdot \bU'(t)+F_\bZ(t, \bu, \bu', \bu'')\cdot
\bU''(t)\right]\,dt=0.
\end{equation}
We consider the special subspace $\bbU$ of variations $\bU$ defined
by
\begin{equation}\label{ortho}
\bbU=\{\bU\in\espacioo: \bU(0)=\bU(1)=\bcero, \bU''\in\{1, t\}^\perp\},
\end{equation}
where $\{1, t\}^\perp$ is the orthogonal complement, in $L^2([0, 1];
\R^2)$, of the subspace generated by $\{1, t\}$. Since these
orthogonality conditions mean
$$
\int_0^1\bU''(t)\,dt=\bU'(1)-\bU'(0)=\bcero,\quad \int_0^1
t\bU''(t)\,dt=\bU'(1)=\bcero,
$$
we can also put
$$
\bbU=\{\bU\in\espacioo: \bU(0)=\bU(1)=\bU'(0)=\bU'(1)=\bcero\}.
$$
We also set
\begin{gather}
\Psi(t)=\int_0^t F_\bu(s, \bu(s), \bu'(s), \bu''(s))\,ds,\nonumber\\
\Phi(t)=\int_0^t \left[-\Psi(s)+F_\bz(s, \bu(s), \bu'(s),
\bu''(s))\right]\,ds,\nonumber
\end{gather}
two continuous, bounded functions by hypothesis. For $\bU\in\bbU$, an
integration by parts in the first term of (\ref{eulerlagrangee})
yields
$$
\int_0^1\left[-\Psi(t)\cdot \bU'(t)+F_\bz(t, \bu, \bu', \bu'')\cdot
\bU'(t)+F_\bZ(t, \bu, \bu', \bu'')\cdot \bU''(t)\right]\,dt=0,
$$
because 
$$
\left.\Psi(t)\bU(t)\right|_0^1=\bcero
$$
for test fields
$\bU\in\bbU$. A second integration by parts leads to
$$
\int_0^1\left[-\Phi(t)+F_\bZ(t, \bu, \bu',
\bu'')\right]\cdot\bU''(t)\,dt=0,
$$
again because 
$$
\left.\Phi(t)\bU'(t)\right|_0^1=\bcero
$$ 
if $\bU\in\bbU$.
Due to the arbitrariness of $\bU\in\bbU$, according to \eqref{ortho}
we conclude that
\begin{equation}\label{conclusione}
F_\bZ(t, \bu, \bu', \bu'')-\Phi(t)=c+Ct\hbox{ in }(0, 1),
\end{equation}
with $c$ and $C$ constants. In particular, since $\Phi$ is absolutely
continuous (it belongs to $W^{1, 1}((0, 1);$ $ \R^2)$), we know that
$F_\bZ(t, \bu, \bu', \bu'')$ must  be absolutely continuous too in
$(0, 1)$, and as such, it cannot have jumps in $(0, 1)$, though it
could possibly have at the endpoints. By differentiating once in
(\ref{conclusione}) with respect to $t$,
$$
\frac{d}{dt}F_\bZ(t, \bu, \bu', \bu'')-F_\bz(t, \bu, \bu',
\bu'')+\Psi(t)=C \hbox{ a.e. in }(0, 1),
$$
and even further
\begin{equation}\label{ultima}
\frac d{dt}\left(\frac{d}{dt}F_\bZ(t, \bu, \bu', \bu'')-F_\bz(t, \bu,
\bu', \bu'')\right)+F_\bu(t, \bu, \bu', \bu'')=0\hbox{ a.e. in }(0,
1).
\end{equation}
We take this information back to \eqref{eulerlagrangee} for a general
$$
\bU\in\espacioo,
$$ 
not necessarily belonging to $\bbU$. One
integration by parts in the second term in (\ref{eulerlagrangee})
yields
\begin{gather}
\int_0^1 F_\bz(t, \bu, \bu', \bu'')\cdot \bU'(t)\,dt=-\int_0^1\frac
d{dt} F_\bz(t, \bu, \bu', \bu'')\cdot\bU(t)\,dt\nonumber\\
+ [F_\bz(t, \bu, \bu',
\bu'')]_{t=0}\cdot \bU(0).\nonumber
\end{gather}
Recall the periodicity conditions for $\bU$. Two such integrations by
parts in the third term of \eqref{eulerlagrangee} leads to
\begin{align}
\int_0^1 F_\bZ(t, \bu, \bu',\bu'')\cdot \bU''(t)\,dt=&-\int_0^1\frac d{dt}F_\bZ(t, \bu, \bu',\bu'')\cdot\bU'(t)\,dt\nonumber\\
&+[F_\bZ(t, \bu, \bu', \bu'')]_{t=0}\cdot \bU'(0)\nonumber\\
=&\int_0^1\frac{d^2}{dt^2}F_\bZ(t, \bu, \bu',\bu'')\cdot \bU(t)\,dt\nonumber\\
&-[\frac d{dt}F_\bZ(t, \bu, \bu', \bu'')]_{t=0}\cdot \bU(0)\nonumber\\
&+[F_\bZ(t,
\bu, \bu', \bu'')]_{t=0}\cdot \bU'(0).\nonumber
\end{align}
In this way (\ref{eulerlagrangee}) becomes
\begin{gather}
\int_0^1\left[\frac d{dt}\left(\frac{d}{dt}F_\bZ(t, \bu, \bu',
\bu'')-F_\bz(t, \bu,
\bu', \bu'')\right)+F_\bu(t, \bu, \bu', \bu'')\right]\cdot \bU(t)\,dt\nonumber\\
-[\frac d{dt}F_\bZ(t, \bu, \bu', \bu'')-F_\bz(t, \bu, \bu',
\bu'')]_{t=0}\cdot \bU(0)+[F_\bZ(t, \bu, \bu', \bu'')]_{t=0}\cdot
\bU'(0).\nonumber
\end{gather}
The integral here vanishes precisely by \eqref{ultima}, and so we are
only left with the contributions on the end-points. Hence, we obtain
\begin{equation}\label{extremoss}
[\frac d{dt}F_\bZ(t, \bu, \bu', \bu'')-F_\bz(t, \bu, \bu',
\bu'')]_{t=0}\cdot \bU(0)-[F_\bZ(t, \bu, \bu', \bu'')]_{t=0}\cdot
\bU'(0)=0.
\end{equation}
Since vectors $\bU'(0)$ and $\bU(0)$ can be chosen arbitrarily, and
independently of each other, because there is always a path 
$$
\bU\in\espacioo
$$ 
starting in a certain arbitrary vector of $\R^2$ and with
any preassigned velocity,  we conclude that
\begin{equation}\label{jump1}
[F_\bZ(t, \bu, \bu', \bu'')]_{t=0}=\bcero,
\end{equation}
and
\begin{equation}\label{jump2}
[\frac d{dt}F_\bZ(t, \bu, \bu', \bu'')-F_\bz(t, \bu, \bu',
\bu'')]_{t=0}=\bcero.
\end{equation}
This completes the proof of Theorem \ref{basicsecond}. Note that this
last condition implies that
$$
\frac d{dt}F_\bZ(t, \bu, \bu', \bu'')-F_\bz(t, \bu, \bu', \bu'')
$$
is absolutely continuous in the interval $[0, 1]$, including the
endpoints.
\end{proof}

\subsection{End-point conditions}\label{roleend}
Second-order variational problems, like the one considered in Theorem \ref{basicsecond}, are typically studied under fixed, end-point conditions at end-points $\{0, 1\}$ up to one order less than the highest order explicitly participating in the functional.
For second-order problems, end-point conditions would involve the four values
$$
\bu(0), \bu(1), \quad \bu'(0), \bu'(1).
$$
Our periodicity conditions would demand
$$
\bu(0)=\bu(1), \quad \bu'(0)=\bu'(1),
$$
but these two common values are unknown. There might be several vectors $\by$ and $\bz$ for which critical paths for the same functional 
$$
E(\bu)=\int_0^1 F(t, \mathbf{u}(t), \mathbf{u}'(t),
\mathbf{u}''(t))\,dt
$$
for $\bu\in H^2([0, 1]; \R^2)$ under fixed end-point conditions
\begin{equation}\label{extremos}
\bu(0)=\bu(1)=\by, \quad \bu'(0)=\bu'(1)=\bz,
\end{equation}
would also be critical paths for our system in Theorem \ref{basicsecond} without imposing such end-point conditions but just periodicity. Those distinguished values for $\by$ and $\bz$ would be such that condition \eqref{saltoooe} turns out to be correct. 

\caja{
We will be very much interested in being capable of counting how many such pairs $(\by, \bz)$ there might be for our particular second-order perturbation $E_\e$ of the basic, first-order functional $E_0$, as given in \eqref{persegor}. Parallel versions of Theorem \ref{basicsecond} taking into account specific end-point conditions will help us in counting branches of critical paths for our perturbed functional $E_\e$. See below.
}

More explicitly, the following versions of Theorem \ref{basicsecond} take into account our discussion above concerning the role played by end-point conditions. To this end, we introduce the notation
\begin{gather}
H^2_{\by, \bz}([0, 1]; \R^2)=\{\bv\in H^2([0, 1]; \R^2): \bv(0)=\bv(1)=\by,
\bv'(0)=\bv'(1)=\bz\},\nonumber\\
H^2_{\by, }([0, 1]; \R^2)=\{\bv\in H^2([0, 1]; \R^2): \bv(0)=\bv(1)=\by,
\bv'(0)=\bv'(1)\},\nonumber\\
H^2_{, \bz }([0, 1]; \R^2)=\{\bv\in H^2([0, 1]; \R^2): \bv(0)=\bv(1),
\bv'(0)=\bv'(1)=\bz\},\nonumber
\end{gather}
for fixed vectors $\by, \bz\in\R^2$. Note that
\begin{equation}\label{interseccion}
H^2_{\by, \bz}([0, 1]; \R^2)=H^2_{\by, }([0, 1]; \R^2)\cap H^2_{, \bz }([0, 1]; \R^2),
\end{equation}
and that this subspace can also be represented as
\begin{equation}\label{menosres}
H^2_{\by, \by, \bz, \bz}([0, 1]; \R^2)
\end{equation}
if the subspace 
$$
H^2_{\by_0, \by_1, \bz_0, \bz_1}([0, 1]; \R^2)
$$
is given through
$$
\{\bv\in H^2([0, 1]; \R^2): \bv(0)=\by_0, \bv(1)=\by_1,
\bv'(0)=\bz_0, \bv'(1)=\bz_1\}.
$$

\begin{theorem}\label{basicsecondd}
Let integrand $F$ and corresponding functional $E$ be as in
Theorem \ref{basicsecond}. Let $\by\in\R^2$ be a given vector.
Suppose that $\bu$ is a critical path of $E$ over the class of
feasible paths $H^2_{\by, }([0, 1]; \R^2)$ just introduced. 
Then the vector field
\begin{equation}\label{esteee}
\frac d{dt}F_\bZ(t, \mathbf{u}, \mathbf{u}', \bu'')-F_\bz(t,
\mathbf{u}, \mathbf{u}', \bu'')
\end{equation}
is absolutely continuous in $(0, 1)$,
\begin{equation}\label{aaaaa}
\frac d{dt}\left(\frac{d}{dt}F_\bZ(t, \bu, \bu', \bu'')-F_\bz(t, \bu,
\bu', \bu'')\right)+F_\bu(t, \bu, \bu', \bu'')=\bcero\hbox{ a.e. $t$ in
}(0, 1),
\end{equation}
and
\begin{equation}\label{saltooo}
[F_\bZ(t, \bu, \bu', \bu'')]_{t=0}=\bcero.
\end{equation}
\end{theorem}
Notice that the classes of feasible paths $H^2_{\by, }([0, 1];
\R^2)$, in this statement, and $H^2_{\by, \bz}([0, 1]; \R^2)$
 are always subsets of $\espacioo$ for every
$\by$ and $\bz$. In fact, if we add to $\espacioo$ a constraint fixing the
starting (and final) vector $\by$, optimality yields a less
restrictive set of conditions, which in this situation amounts to
just loosing the continuity of the vector field in \eqref{esteee}
across $t=0$. Note the subtle difference between the statements of Theorems \ref{basicsecond} and Theorem \ref{basicsecondd}. 
\begin{proof}[Proof of Theorem \ref{basicsecondd}]
The proof is exactly the same, word by word,  as that of Theorem
\ref{basicsecond}. The only difference revolves around the discussion
of \eqref{extremoss}. Under periodic conditions, without imposing a
particular vector as starting vector (as we are doing here),
\eqref{extremoss} leads to the two vanishing jump conditions
\eqref{jump1} and \eqref{jump2}. However, if we insist in that the
starting vector for paths is a given, specific vector $\by$, then
feasible variations $\bU$ in \eqref{extremoss} must comply with
$\bU(0)=\bcero$, and so we are left with
$$
[F_\bZ(t, \bu, \bu', \bu'')]_{t=0}\cdot \bU'(0)=0.
$$
The arbitrariness of $\bU'(0)$ (which can be chosen freely) leads to
the first jump condition \eqref{jump1}, but we have no longer
\eqref{jump2}. This translates into the continuity of the vector
field \eqref{esteee} in the open interval $(0, 1)$, not including the end-points, precisely because
we cannot rely on the corresponding jump condition across end-points. However
the differential system \eqref{aaaaa} holds in $(0, 1)$
in both situations.
\end{proof}

We can also perform the same analysis in the subspace $H^2_{, \bz }([0, 1]; \R^2)$ in a similar manner.
\begin{theorem}\label{basicseconddd}
Let the integrand $F$ and the corresponding functional $E$ be as in
Theorem \ref{basicsecond}. Let $\bz\in\R^2$ be a given vector.
Suppose that $\bu$ is a critical path of $E$ over the class of
feasible paths $H^2_{, \bz}([0, 1]; \R^2)$. 
Then the vector field
\begin{equation}\label{esteeee}
\frac d{dt}F_\bZ(t, \mathbf{u}, \mathbf{u}', \bu'')-F_\bz(t,
\mathbf{u}, \mathbf{u}', \bu'')
\end{equation}
is absolutely continuous in $[0, 1]$, and
\begin{equation}\label{aaaa}
\frac d{dt}\left(\frac{d}{dt}F_\bZ(t, \bu, \bu', \bu'')-F_\bz(t, \bu,
\bu', \bu'')\right)+F_\bu(t, \bu, \bu', \bu'')=\bcero\hbox{ a.e. $t$ in
}(0, 1),
\end{equation}
\end{theorem}
Notice how the proof of this result is similar to the previous one. In fact, because of \eqref{interseccion}, Theorem \ref{basicsecond} gathers the simultaneous effect of both Theorems \ref{basicsecondd} and \ref{basicseconddd}. Namely, we have the following fundamental corollary. 

\begin{corollary}\label{importantee}
Suppose 
$$
\mathbf{u}:[0, 1]\to\R^2\hbox{ in }\espacioo
$$
is a critical path in Theorem \ref{basicsecond}. Put 
\begin{equation}\label{vectores}
\by=\bu(0)=\bu(1)\in\R^2,\quad \bz=\bu'(0)=\bu'(1)\in\R^2.
\end{equation}
Then $\bu$ is a critical path in Theorems \ref{basicsecondd} and \ref{basicseconddd}, i.e. in the spaces  
\begin{equation}\label{dosespacios}
H^2_{\by, }([0, 1]; \R^2),\quad H^2_{, \bz }([0, 1]; \R^2),
\end{equation}
for the selection \eqref{vectores}. Conversely, suppose that a certain path $\bu$ is simultaneously a critical path in the two spaces \eqref{dosespacios} for vectors $\by$ and $\bz$ in \eqref{vectores}. Then $\bu$ is also critical in Theorem \ref{basicsecond}. 
\end{corollary}

We believe it is instructive to realize the underlying subspaces where variations are taken from in the three theorems mentioned in this corollary. To begin with, space $\espacioo$, where functional $E$ is regarded in Theorem \ref{basicsecond}, is itself a vector space, and hence variations can be taken in itself. However, for feasible paths in Theorems \ref{basicsecondd} and \ref{basicseconddd}, spaces of admissible variations are 
\begin{gather}
H^2_{\bcero, }([0, 1]; \R^2)=\{\bv\in H^2([0, 1]; \R^2): \bv(0)=\bv(1)=\bcero,
\bv'(0)=\bv'(1)\},\label{variaciones}\\
H^2_{, \bcero }([0, 1]; \R^2)=\{\bv\in H^2([0, 1]; \R^2): \bv(0)=\bv(1),
\bv'(0)=\bv'(1)=\bcero\},\label{variacionesdos}
\end{gather}

Finally, it is possible to review the same analysis in the more restrictive subspace in \eqref{menosres} for fixed vectors $\by$ and $\bz$, and find the parallel statement that follows, whose proof can be very easily adapted from the previous ones. 
Note how as we place more demands on feasible paths, optimality turns back less regularity through end-points. 

\begin{theorem}\label{basicthird}
Let the integrand $F$ and the corresponding functional $E$ be as in
Theorem \ref{basicsecond}. Let $\by, \bz\in\R^2$ be given vectors.
Suppose that $\bu$ is a critical path of $E$ over the class of
feasible paths in \eqref{interseccion} or \eqref{menosres}. 
Then the vector field
\begin{equation}\label{estee}
\frac d{dt}F_\bZ(t, \mathbf{u}, \mathbf{u}', \bu'')-F_\bz(t,
\mathbf{u}, \mathbf{u}', \bu'')
\end{equation}
is absolutely continuous in $(0, 1)$, and
\begin{equation}\label{aaaa}
\frac d{dt}\left(\frac{d}{dt}F_\bZ(t, \bu, \bu', \bu'')-F_\bz(t, \bu,
\bu', \bu'')\right)+F_\bu(t, \bu, \bu', \bu'')=\bcero\hbox{ a.e. $t$ in
}(0, 1).
\end{equation}
\end{theorem}

\subsection{Initial-value, Cauchy problems}
Solutions whose existence is guaranteed by Theorems \ref{basicfirst}, in the case of first-order problems, or \ref{basicthird}, for second-order problems, are not, in general, unique. To enforce such uniqueness, which will be a necessary ingredient of our counting procedure, we need to resort to standard initial-value Cauchy problems, and relate them to end-point conditions. The following propositions are very classical.
\begin{proposition}
Consider the initial-value, Cauchy problem associated with the second-order differential system \eqref{aaaf}, under the same hypotheses as in Theorem \ref{basicfirst}, 
\begin{equation}\label{primerorden}
-\frac d{dt}F_\bz(t, \bu,
\bu')+F_\bu(t, \bu, \bu')=\bcero \hbox{ for }t\in(0, 1),\quad \bu(0)=\bp, \bu'(0)=\bq,
\end{equation}
for arbitrary $\bp, \bq\in\R^2$. There is a unique solution map
$$
\bu(t; \bp, \bq):[0, 1]\times\R^2\times\R^2\to\R^2
$$
which inherits the same smoothness as that of $F$ in all its variables.
\end{proposition}
A main point in our analysis will be concerned with counting how many pairs $(\bp, \bq)\in \R^2\times\R^2$ are capable of enforcing
$$
\bu(0; \bp, \bq)=\bu(1; \bp, \bq),
$$
so that the corresponding solution $\bu(t; \bp, \bq)$ of the initial-value problem is, in fact, a continuous, $1$-periodic path. 

A similar result holds for second-order, variational problems. 
\begin{proposition}\label{hesta}
Consider the initial-value, Cauchy problem associated with the fourth-order differential system \eqref{aaaa}, under the same hypotheses as in Theorem \ref{basicsecond}, 
\begin{equation}\label{segundoorden}
\frac d{dt}\left(\frac{d}{dt}F_\bZ(t, \bu, \bu', \bu'')-F_\bz(t, \bu,
\bu', \bu'')\right)+F_\bu(t, \bu, \bu', \bu'')=\bcero\hbox{ for }t\in(0, 1),
\end{equation}
under the initial conditions
$$
\bu^{i)}(0; \bp_0, \bp_1, \bp_2, \bp_3)=\bp_i,\quad i=0, 1, 2, 3.
$$
for arbitrary $\bp_i\in\R^2$. There is a unique solution map
$$
\bu(t; \bp_0, \bp_1, \bp_2, \bp_3):[0, 1]\times\R^2\times\R^2\times\R^2\times\R^2\to\R^2
$$
which inherits the same smoothness as that of $F$ in of all its variables.
\end{proposition}
Again, we will be interested in estimating how many four-tuples $(\bp_0, \bp_1, \bp_2, \bp_3)$ are capable of producing $1$-periodic solutions through the solution mapping of the corresponding initial-value problem, i.e. such that
$$
\bu^{i)}(0; \bp_0, \bp_1, \bp_2, \bp_3)=\bu^{i)}(1; \bp_0, \bp_1, \bp_2, \bp_3)
$$
for both $i=0, 1$. 

For future reference, we formally adopt the following notation.
\begin{definition}\label{solutionmap}
The mapping
$$
\bu(t; \bp, \bq):[0, 1]\times\R^2\times\R^2\to\R^2
$$
is the solution mapping for problem \eqref{primerorden}. The mapping
$$
\bu(t; \bp_0, \bp_1, \bp_2, \bp_3):[0, 1]\times\R^2\times\R^2\times\R^2\times\R^2\to\R^2
$$
will designate the solution mapping for problem \eqref{segundoorden}.
\end{definition}

\caja{
We will be using this same notation in the particular situation we are most interested in, i.e. for the differential law coming from examining optimality for our family of functionals $E_\e$ as recalled in Section \ref{prelim}. Namely,
$$
\bu_\e(t; \bp_0, \bp_1, \bp_2, \bp_3)
$$
will designate the unique solution of the fourth-order differential problem in Proposition \ref{hesta} for 
$$
F_\e(t, \bu, \bz, \bZ)=\frac12(\bz\cdot\bF^\perp(\bu))^2+\frac\e2\left(|\bZ-\bv''_0(t)|^2+|\bz-\bv'_0(t)|^2+|\bu-\bv_0(t)|^2\right),
$$
where 
$$
\bF^\perp(\bu)= (-Q(\bu),P(\bu)),\quad \bv_0=(X, Y).
$$
$\bv_0$ is a certain fixed path suitably chosen as described in Section \ref{tres}. 
}

\subsection{The shooting method}\label{disparo}
This is a mechanism that aims at solving an end-point-value problem through a typical initial-value, Cauchy one. 
What is important for us, as stressed in the preceding sections, is the fundamental distinction between an initial-value  and an end-point-value problem in terms of uniqueness of solutions: under very reasonable regularity hypotheses for the differential problem, the solution of an initial-value problem is unique; however, this may not be so for an end-point-value version of it. We are very much in need of controlling the possible number of solutions of an end-point-value problem. We plan to do it through the corresponding initial-value version of the differential problem, as we explain below.

Suppose that 
\begin{equation}\label{ecuadif}
\bF(t, \bx(t), \bx'(t), \dots, \bx^{N-1)}(t), \bx^{N)}(t))=\bcero\quad t\in(0, 1),
\end{equation}
is a certain differential system of order $N$ in $m$ unknowns
$$
\bx(t):[0, 1]\to\R^m, 
$$
for which we have uniqueness of solutions for every Cauchy problem for the initial conditions
\begin{equation}\label{inicial}
\bx(0)=\bx_0, \bx'(0)=\bx_1,\dots, \bx^{N-1)}(0)=\bx_{N-1},
\end{equation}
and vectors
$$
\bx_0, \bx_1, \dots, \bx_{N-1}\in\R^m.
$$
\begin{definition}
We denote by
$$
\bx(t; \bx_0, \bx_1, \dots, \bx_{N-1}): [0, 1]\times\R^{m\times N}\to\R^m
$$
such unique solution. Under suitable regularity assumptions on the mapping $\bF$ determining the differential law \eqref{ecuadif}, such solution mapping $\bx$ inherits the corresponding smoothness through the standard smooth dependence on initial conditions for differential systems.
\end{definition}

Whenever $N=2k$ is even, one may be interested in counting the number of periodic solutions where derivatives up to order $k-1$ are glued at $0$ and $1$
\begin{equation}\label{periodicidad}
\bx^{j)}(0; \bx_0, \bx_1, \dots, \bx_{N-1})=\bx^{j)}(1; \bx_0, \bx_1, \dots, \bx_{N-1})
\end{equation}
for $j=0, 1, \dots, k-1$. These are $k$ (non-linear) conditions on the $N=2k$ vectors
$$
\bx_0, \bx_1, \dots, \bx_{N-1}.
$$ 

At this stage, the issue that we would like to emphasize is the analytic dependence on initial data. The following is a classical fact. 
\begin{proposition}\label{analiticidad}
Suppose the mapping
$$
\bF(t, \bx(t), \bx'(t), \dots, \bx^{N-1)}(t)): [0, 1]\times\R^{N\times m}\to\R^m,
$$
is analytic in all of its variables, and $1$-periodic in $t$. Let
\begin{equation}\label{aplicacion}
\bX(t; \bx_0, \bx_1, \dots, \bx_{N-1}):[0, 1]\times\R^{N\times m}\to\R^m
\end{equation}
be the solution mapping of the differential system
\begin{gather}
\bX^{N)}(t)=\bF(t, \bX(t), \bX'(t),\dots, \bX^{N-1}(t)),\quad t\in[0, 1], \nonumber\\
\bX(t)\equiv\bX(t; \bx_0, \bx_1, \dots, \bx_{N-1}),\nonumber\\
\bX^{j)}(0; \bx_0, \bx_1, \dots, \bx_{N-1})=\bx_j,\quad j=0, 1, \dots, N-1.\nonumber
\end{gather}
The mapping in \eqref{aplicacion} is analytic as well.
\end{proposition}
The relevance for us of this regularity can be described as follows. To make things more transparent, take $N=2$, $m=1$, so that we are talking about a second-order, scalar differential equation
\begin{gather}
X''(t; p, q)=F(X(t; p, q), X'(t; p, q))\hbox{ in }[0, 1],\nonumber\\ 
X(0; p, q)=p, X'(0; p, q)=q,\nonumber
\end{gather}
and 
$$
F(P, Q):\R^2\to\R
$$ 
is an analytic function of two variables. As we will see, this reduction does not mean a true simplification of the arguments, as we can apply the following ideas to a full general, vector, differential problem. As a consequence of Proposition \ref{analiticidad}, we can conclude that $X(t; p, q)$ is an analytic function of all its variables. Define
$$
G(p, q)=X(1; p, q)-p.
$$
$G$ is clearly analytic. We are interested in the curve $G=0$ in the $p-q$-plane. By the classical implicit function theorem, the equation
$G(p, q)=0$ determines $q=q(p)$ in a unique, well-defined, smooth manner in a vicinity of every such pair $(p, q)$ where $G_q(p, q)$ does not vanish. The point is that the solutions of the vector equation
$$
(G(p, q), G_q(p, q))=(0, 0)
$$
are isolated by analyticity. Hence, even if 
$$
G_q(p_0, q_0)=0
$$ 
for a particular pair $(p_0, q_0)$, it suffices to move around a bit to find that for each $p$ not far from $p_0$, there is a unique $q$ such that
$$
X(1; p, q)=p.
$$
In particular, for every pair $(p, q)$ such that $X(t; p, q)$ is a smooth, periodic function, there is, at most, a finite number of values of $t$ in $X(t; p, q)$ for which there can possibly be more than one value for $q$ with 
\begin{equation}\label{corrimiento}
X(1+t; p, q)=X(t; p, q).
\end{equation}
One can therefore always easily select values of $t$ for which there is a unique $q$ with the property in \eqref{corrimiento}. 
We write a more general version of this discussion that is exactly tailored for our purposes. The argument behind is exactly the same.
\begin{proposition}\label{identificacion}
In the situation of Proposition \ref{analiticidad} for $N=4$ and $m=2$, let 
$$
\bX(t; \bP_0, \bP_1, \bP_2, \bP_3)
$$
stand for the corresponding solution mapping. Suppose that 
$$
\bX(t; \bp_0, \bp_1, \bp_2, \bp_3)
$$ 
is a certain, smooth, periodic solution. For every $t\in[0, 1]$, except possibly for a finite number of values, there is a unique analytic mapping 
$$
(\bP_2(\bP, \bQ), \bP_3(\bP, \bQ))\in\R^2\times\R^2
$$ 
for each $(\bP, \bQ)$ in a neighborhood of 
$$
(\bP_0, \bP_1)=(\bX(t; \bp_0, \bp_1, \bp_2, \bp_3), \bX'(t; \bp_0, \bp_1, \bp_2, \bp_3))
$$
such that 
\begin{gather}
\bX(0; \bP, \bQ, \bP_2, \bP_3)=\bX(1; \bP, \bQ, \bP_2, \bP_3)=\bP,\nonumber\\
\bX'(0; \bP, \bQ, \bP_2, \bP_3)=\bX'(1; \bP, \bQ, \bP_2, \bP_3)=\bQ.\nonumber
\end{gather}
\end{proposition}
Even more explicitly, we have the following which we record for future reference.
\begin{corollary}\label{importantez}
Let 
$$
\overline\bu(t; \bp_0, \bp_1, \bp_2, \bp_3):[0, 1]\times\R^2\times\R^2\times\R^2\times\R^2\to\R^2
$$
be the mapping in Definition \ref{solutionmap} and Proposition \ref{hesta} for an integrand 
$$
F(t, \mathbf{u}, \bz, \bZ):[0,
1]\times\R^2\times\R^2\times\R^2\to\R
$$ 
in Theorem \ref{basicsecond} which is analytic. 
Except for an isolated set of pairs $(\bp_0, \bq_0)$, there is a well-defined, analytic mapping
$$
\bu(t; \bp, \bq): [0, 1]\times{\mathbb B}_\rho(\bp_0)\times{\mathbb B}_\rho(\bq_0)\to\R^2
$$
for respective neighborhoods ${\mathbb B}_\rho(\bp_0)$ and ${\mathbb B}_\rho(\bq_0)$ of $\bp_0$ and $\bq_0$, furnishing the solution of \eqref{segundoorden} with
$$
\bu(0; \bp, \bq)=\bu(1; \bp, \bq)=\bp,\quad \bu'(0; \bp, \bq)=\bu'(1; \bp, \bq)=\bq
$$
for 
$$
(\bp, \bq)\in{\mathbb B}_\rho(\bp_0)\times{\mathbb B}_\rho(\bq_0).
$$
\end{corollary}
For the proof, simply use Proposition \ref{identificacion} to set
$$
\bu(t; \bp, \bq)=\overline\bu(t; \bp, \bq, \bp_2(\bp, \bq), \bp_3(\bp, \bq)),
$$
for $(\bp_2(\bp, \bq), \bp_3(\bp, \bq))$
the mapping in the statement of that proposition.

\caja{We will invoke Corollary \ref{importantez} for the family of integrands written in the last framed statement
$$
F_\e(t, \bu, \bz, \bZ)=\frac12(\bz\cdot\bF^\perp(\bu))^2+\frac\e2\left(|\bZ-\bv''_0(t)|^2+|\bz-\bv'_0(t)|^2+|\bu-\bv_0(t)|^2\right),
$$
where 
$$
\bF^\perp(\bu)= (-Q(\bu),P(\bu)),\quad \bv_0=(X, Y).
$$
For each fixed, positive $\e$, $F_\e$ is analytic in all its variables provided the auxiliary path $\bv_0(t)$ can be taken to be analytic as well. There is no problem with such a selection as has been described in Section \ref{tres}. 
}

\subsection{Multiplicity}\label{multiplicidadd}
Consider the family of functions
\begin{equation}\label{funcionesss}
f_j(r): I\to\R,\quad f_j(r)=\int_J \rho_j(t) P(u(t, r))\,dt, \quad j=1, 2, \dots,
\end{equation}
where both $I$ and $J$ are closed, finite intervals, $0\in J$, $|J|=1$, $u(t, r)$ is a certain given function, and $P(x)$ is a polynomial of degree $n$ of a single variable $x$. If the sequence of functions $\{\rho_j\}$ is a sequence of mollifiers, or an approximation of the identity, in the sense
\begin{equation}\label{mollifierr}
\lim_{j\to\infty}\int_J \rho_j(t) F(t)\,dt=F(0)
\end{equation}
for every continuous $F$, then we have the following interesting result. Recall that the main properties of the sequence $\{\rho_j\}$ (in the one-dimensional case) are:
\begin{itemize}
\item smoothness: $\rho_j\in \cC^\infty(J)$;
\item small support: 
$$
J_j\equiv\supp(w_j)= [-\alpha_j, \alpha_j)]\subset J, \alpha_j\searrow0;
$$
\item non-negativeness and unit total mass:  
$$
\int_J \rho_j(t)\,dt=1,\quad  \rho_j\ge0.
$$
\end{itemize}
The convergence properties associated with such a family of functions go well beyond \eqref{mollifierr}.
This is very classical material (\cite{brezis}).

\begin{proposition}\label{mollifier}
Assume that the function 
$$
u(t, r):J\times I\to\R
$$ 
is smooth, and $u(0, r)$ is linear in $r$. Then, for arbitrarily large $j$, the family of functions in \eqref{funcionesss} 
$$
f_j(r): I\to\R,\quad f_j(r)=\int_J \rho_j(t) P(u(t, r))\,dt, \quad j=1, 2, \dots,
$$
for a polynomial $P(x)$ of degree $n$, cannot have more than $n$ roots in $I$.
\end{proposition}
\begin{proof}
Suppose for every large $j$, there are, at least, $n+1$ roots (counting multiplicity) in $I$ for $f_j$. Then it is elementary to realize that there must be some $r_j\in I$ which is a root of the $n$-th derivative
$$
0=f_j^{n)}(r_j)=\int_J \rho_j(t)\left.\frac{\partial^n}{\partial r^n}\right|_{r=r_j}P(u(t, r))\,dt.
$$
Differentiation under the integral sign is legitimate given the smoothness of all elements in a compact set. 
Let $r_j\to r_\infty\in I$ for some subsequence not relabeled. Then
\begin{equation}\label{convergencia}
0=\lim_{j\to\infty} \int_J \rho_j(t)\left.\frac{\partial^n}{\partial r^n}\right|_{r=r_j}P(u(t, r))\,dt=\frac{\partial^n}{\partial r^n}P(u(0, r_\infty)).
\end{equation}
This is impossible for a non-trivial polynomial of degree $n$ under the linearity of $u(0, r)$ on $r$. 

Note that the convergence expressed in \eqref{convergencia} is a consequence of writing the difference 
$$
\int_J \rho_j(t)\left[\frac{\partial^n}{\partial r^n}P(u(t, r_j))-\frac{\partial^n}{\partial r^n}P(u(0, r_\infty))\right]\,dt
$$
in the form
\begin{gather}
\int_J \rho_j(t)\left[\frac{\partial^n}{\partial r^n}P(u(t, r_j))-\frac{\partial^n}{\partial r^n}P(u(t, r_\infty))\right]\,dt+\nonumber\\
\int_J \rho_j(t)\left[\frac{\partial^n}{\partial r^n}P(u(t, r_\infty))-\frac{\partial^n}{\partial r^n}P(u(0, r_\infty))\right]\,dt.\nonumber
\end{gather}
Take into account the smoothness of $u(t, r)$ and the boundedness of its derivatives up to order $n$ in $J\times I$, and the fact that
$$
\int_J\rho_j(t)\,dt=1,\hbox{ for all }j,
$$
to use the Lebesgue dominated convergence principle for the first integral. 
This is standard.
\end{proof}

\caja{The particular form of the composition $P(u(t, r))$ does not play a role in the previous proof. We have assumed that particular form under the influence of the structure of problems to be treated later in connection with Hilbert's 16th problem.}

The same proof es valid for the family of functions
\begin{equation}\label{funcioness}
f_j(r): I\to\R,\quad f_j(r)=\int_J \rho_j(t) u(t, r)\,dt, \quad j=1, 2, \dots,
\end{equation}
where other ingredients are kept exactly the same. 
\begin{proposition}\label{mollifierdos}
Assume that the function 
$$
u(t, r):J\times I\to\R
$$ 
is smooth, and $u(0, r)$ is a non-trivial polynomial of degree $n$ in the variable $r$. Then, for arbitrarily large $j$, the family of functions in \eqref{funcioness} 
$$
f_j(r): I\to\R,\quad f_j(r)=\int_J \rho_j(t) u(t, r)\,dt, \quad j=1, 2, \dots,
$$
cannot have more than $n$ roots in $I$.
\end{proposition}

We can even let the function $u(t, \br)$ be the result of an asymptotic process depending on several variables $(t, \br)$
$$
f_j(\br): I\to\R,\quad f_j(\br)=\int_J \rho_j(t) u_j(t, \br)\,dt, \quad j=1, 2, \dots,
$$
provided we are allowed to modulate the mollifier $\rho_j$ with respect to $u_j$. Recall that 
$$
J_j=[-\alpha_j, \alpha_j],\quad \alpha_j\searrow0.
$$

\begin{proposition}\label{aclarar}
Let $\{u_j(t, \br)\}$ be a sequence of smooth functions of several variables 
$$
(t, \br)\in J\times I\subset\R\times\R^d.
$$ 
\begin{enumerate}
\item For every subsequence $k(j)$ sufficiently advanced, 
$$
\lim_{j\to\infty}\int_{J_{k(j)}}\rho_{k(j)}(t)u_j(t, \br)\,dt=\lim_{j\to\infty}u_j(0, \br)
$$
for all $\br$ in a compact set of $I$.
\item For every subsequence $k(j)$ sufficiently advanced, 
$$
\lim_{j\to\infty}\int_0^{2\alpha_{k(j)}}\rho_{k(j)}(t)u_j(t, \br)\,dt=\lim_{j\to\infty}u_j(\alpha_{k(j)}, \br)
$$
for all $\br$ in a compact set of $I$. 
\end{enumerate}
\end{proposition}
\begin{proof}
The proof is straightforward after the ideas utilized in the proof of Proposition \ref{mollifier}. For each $j$ fixed, we have
$$
\lim_{k\to\infty}\int_J \rho_k(t) u_j(t, \br)\,dt=u_j(0, \br).
$$
For $\br$ belonging to a given compact set, we can select $k(j)$ sufficiently advanced (depending on $j$ and such compact set), so that 
$$
\left|\int_J \rho_{k(j)}(t) u_j(t, \br)\,dt-u_j(0, \br)\right|\le\frac1j.
$$
The second part is immediate under an easy change of variables, and the fact
$$
\lim_{k\to\infty}\int_{J_k}\rho_k(t)\left[u_j(s+\alpha_k, \br)-u_j(\alpha_k, \br)\right]\,dt=0,
$$
for every fixed $j$, which is a consequence of the first statement. 
\end{proof}

\caja{This final proposition has been tailored very precisely to be used later when dealing explicitly with Hilbert's 16th problem.}
\section{Limit cycles for planar, polynomial, differential systems}
In this principal section we focus on applying all of our previous abstract and general results to the following setting
$$
\H=\espacioo; 
$$
$\bbO\subset\espacioo$ is the set of regular (with a no-where vanishing derivative) curves with winding number $+1$, 
$$
\bbO=\bigcup_{d\in\N}\bbO_d,
$$
with $\bbO_d$, the subset of $\bbO$ with absolute winding number not greater than $d$; 
and 
$$
E_\e:\H\to\R^+,\quad E_\e(\bu)=\int_0^1 F_\e(t, \bu(t), \bu'(t),
\bu''(t))\,dt,
$$
where
\begin{align}
F_\e(t, \bu, \bz, \bZ)=&\frac12(\bF^\perp(\bu)\cdot\bz)^2+\frac\e2(|\bZ|^2+|\bz|^2+|\bu|^2)\nonumber\\
&+\bu\cdot\bv_0(t)+\bz\cdot\bv'_0(t)+\bZ\cdot\bv''_0(t).\nonumber
\end{align}
We can, therefore, write
\begin{gather}
E_\e(\bu)=E_0(\bu)+\frac\e2\|\bu+\bv_0\|^2_{H^2([0, 1]; \R^2)}-\frac\e2\|\bv_0\|^2_{H^2([0, 1]; \R^2)},\nonumber\\
E_0(\bu)=\int_0^1\frac12(\bF^\perp(\bu(t))\cdot\bu'(t))^2\,dt.\label{formofe0}
\end{gather}
Since  the quantity
$$
-\frac\e2\|\bv_0\|^2_{H^2([0, 1]; \R^2)}
$$
is independent of $\bu$, a constant, we can ignore it, and regard 
$$
E_\e(\bu)=E_0(\bu)+\frac\e2\|\bu-\bv_0\|^2_{H^2([0, 1]; \R^2)}
$$
as our singular-perturbation of $E_0$. Note that we can write $\bv_0$ or $-\bv_0$ for the auxiliary path just for notational convenience. 

Theorem \ref{morsefinal} is our guiding principle in this section. We restate it here for the convenience of readers, as we will refer to it often. 
\begin{theorem}\label{morsefinall}
Let $E_\e:\bbH\to\R^+$
be a family of smooth, non-negative, coercive functionals, with $0\le E_0\le E_\e$, and $\bbO_d$,  $d\in\N$, an increasing family of  open subsets of $\bbH$. As in \eqref{unionn}, put
$$
\bbO=\bigcup_{d\in\N}\bbO_d.
$$
Suppose, in addition, that
\begin{enumerate}
\item $E_\e$ is a Morse functional for positive $\e$.
\item $\bbO_d$ is invariant for all $E_\e$ if $d$ is large enough, and every $\e>0$.
\item For every $d$, for $a$ sufficiently small, and $\e$ small enough (depending on $a$):
\begin{enumerate}
\item Each component of $\{E_0=0\}\cap\bbO_d$ identifies (is contained in), in a one-to-one manner, one component of $\{E_\e\le a\}\cap\bbO_d$.
\item Each component of $\{E_\e\le a\}\cap\bbO_d$ is topologically equivalent to a ball.
\end{enumerate}
\end{enumerate}
Then 
\begin{equation}\label{desimportante}
\#(\bbP)\le 1+\lim_{a\to0}\lim_{\e\to0}M_{cri, \e, a},
\end{equation}
if $M_{cri, \e, a}$ is the number of critical elements of $E_\e$ in $\{E_\e> a\}\cap\bbO$.
\end{theorem}

Recall that $\bbP$ is the collection of connected components of the zero set $\{E_0=0\}$ in $\bbO$ (Problem \ref{original}) whose number we would like to count. 

Theorem \ref{central} is concerned with main assumption (1) in Theorem \ref{morsefinall}. Main properties of each $\bbO_d$ need to be addressed. Lemma \ref{componenteperturbado} below will help us with the last main assumption (3) in Theorem \ref{morsefinall}. In addition, and this is probably the central part of our proof, we should be able to provide an upper bound for the number $M_{cri, \e, a}$ of critical paths of $\{E_\e\ge a\}\cap\bbO$ independent of $a>0$ small, and $\e$ sufficiently small.

The program we would like to cover consists then of the following important steps.
\begin{enumerate}
\item Prove that each individual $\bbO_d$ is invariant for each $E_\e$, for $d$ large enough, and that Morse inequalities are valid for $E_\e$ in $\bbO_d$.
\item Identification of components of $\{E_0=0\}$ in $\bbO_d$ through components of $\{E_\e\le a\}\cap\bbO_d$ for $a$ small, $d$ large, and $\e$ small enough. 
\item Show that
those same components of $\{E_\e\le a\}\cap\bbO_d$ are topologically equivalent to a ball for $a$ small, $d$ large, and $\e$ small enough.
\item Argue that 
$$
E'_0:\espacioo\to\espacioo
$$ 
is compact. 
\item For each fixed $\bu\in\espacioo$, the linear, self-adjoint operator
$$
E''_0(\bu):\espacioo\to\espacioo
$$
is compact as well. 
\item Examine the differential system whose solutions are critical paths for $E_\e$ and $E_0$. 
\item Explore the possible asymptotic limits of branches of critical paths $\{\bu_\e\}$ with arbitrarily small critical value $a$ uniformly away from zero with respect to $\epsilon$, of the previous, $\e$-dependent, singu\-larly-perturbed functional as $\e\searrow0$.
\item For each of the possible previous asymptotic limits, find a suitable upper bound for the number of branches converging to each such limit. 
\item Conclude with an upper bound for the number of limit cycles of a planar, polynomial, differential system of degree $n$.
\end{enumerate}
We will be covering all of these steps successively in the coming subsections, and constantly monitoring what we are achieving for a complete proof of  Theorems \ref{principall} and \ref{t5}. 

\subsection{Role played by $\bbO_d$}
The choice of the ambient set $\bbO_d$ will be of paramount importance for us. We cover several crucial aspects in the following subsections. 

\subsubsection{Invariance of $\bbO_d$ under $E_\e$}
This property means that the flow of $-E_\e$, leading to smaller values of $E_\e$, cannot push paths in $\bbO_d$ to a larger value of $d$. This looks intuitively pretty clear as such a behavior could only be favored by a much more erratic functional than $E_0$. 
Formally, we need to argue that the value of $E_\e$ at a path 
$\bu(t)\in\partial\bbO_d$ 
decreases when we perturb $\bu(t)$ locally in such a way that the resulting perturbation $\tilde\bu(t)$ lies back in $\bbO_d$. 

We will be using the compactness of the derivative mapping
$$
E'_0(\bu): \espacioo\to\espacioo,
$$
which means that the sequence $\{E'_0(\bu_j)\}$ always admits a subsequence converging strongly in $\espacioo$ whenever $\{\bu_j\}$ is uniformly bounded in the same space. 
For the sake of organization, this property will be treated in the subsequent subsections.  

\begin{proposition}\label{invarianza}
Suppose the auxiliary path $\bv_0$ in \eqref{perturbacionparticular} is smooth, and belongs to $\bbO_1$ ($d=1$). Then $\bbO_d$ is invariant under $E_\e$
for $\e>0$, and every sufficiently large $d\in\N$ (independently of $\e$).
\end{proposition}
\begin{proof}
Suppose, seeking a contradiction, that we could find a subsequence of increasing values of $d\in\N$ with paths $\bu_d\in\partial\bbO_d$   such that $-E'_0(\bu_d)$ pushes $\bu_d$ off $\bbO_d$, for every such $d$. Since functional $E_\e$ is coercive, sub-level sets $\{E_\e\le b_\e\}$ for $b_\e$ large enough are invariant (just as it was indicated in the proof of Theorem \ref{morsefinal}), and hence we can assume that the set of paths $\{\bu_d\}$ is uniformly bounded. By the claimed compactness of the derivative operator (Lemma \ref{compacidad} below), for some subsequence if necessary, $\{E'_0(\bu_d)\}$ converges (strongly) in $\espacioo$ to some path. By Proposition \ref{mollifier}, we conclude that the combination
$$
\bu_d-\delta E'_0(\bu_d)
$$
cannot push $\bu_d$ off $\bbO_d$ for arbitrary small values of $\delta$. 

The effect of adding the perturbation 
$$
\frac\e2\|\bu-\bv_0\|^2
$$
to $E_0$ only enforces the behavior just described if $\bv_0$ is taken in $\bbO_1$, being this term a multiple of the (square of the) distance to an element of $\bbO_1$. 
This contradiction shows that $\bbO_d$ is in fact invariant for $-E_\e$ for all large $d$. 

The same argument also justifies that there cannot be a critical path of $E_\e$ on $\partial\bbO_d$ since such possibility would force $-E'_0$ to point off $\bbO_d$ which has been discarded above. Indeed, if
\begin{equation}\label{criticalz}
E'_0(\bu)+\e(\bu-\bv_0)=\bcero,\bu\in\partial\bbO_d,
\end{equation}
then the element
$$
\e(\bu-\bv_0)=-E'_0(\bu)
$$
should point off $\bbO_d$ because $\bu-\bv_0$ does. But being equal to $-E'_0(\bu)$,  this behavior has been discarded above, for large $d$. Hence \eqref{criticalz} is impossible for large $d$. 
\end{proof}

\subsubsection{Identification of components of $\{E_0=0\}$}
Recall that 
$$
E_0(\bu)=\int_0^1 \frac12Z^2\,dt,\quad Z=P(x, y)y'-Q(x, y)x',\quad \bu=(x, y).
$$
\begin{lemma}\label{componenteperturbado}
Let $\bbP'$ be a finite subset of $\bbP$. There is $a(\bbP')>0$ and $d(\bbP')\in\N$, such that each component of $\{E_0\le a\}\cap\bbO_d$, for $a<a(\bbP')$ and $d\ge d(\bbP')$, contains at most a unique element of $\bbP'$.
\end{lemma}
\begin{proof}
Given that $\bbP'$ is finite, we can take $d(\bbP')$ large enough so that $\bbP'\subset\bbO_d$ for $d\ge d(\bbP')$. Suppose, seeking a contradiction, that we could find two distinct elements of $\bbP'$ in the same component of $\{E_0\le \delta\}\cap\bbO_d$ for every positive $\delta$, and some $d$ large. That would imply that for every such $\delta$, there is a continuous path 
$$
\sigma_\delta(s): [0, 1]\to\bbO_d,
$$
joining two different components of $\{E_0=0\}$ in $\bbP'$ entirely contained in $\{E_0\le\delta\}\cap\bbO_d$. 

Let $\bK$ be a compact set separating in the plane two different limit cycles in $\bbP'$. By continuity, $\sigma_\delta$, belonging to $\{E_0\le\delta\}\cap\bbO_d$ and joining the two elements of $\bbP'$, must intersect $\bK$ at some point belonging to the image of a certain path $\bu_\delta(=\sigma_\delta(s_\delta), s_\delta\in[0, 1])$ with $E_0(\bu_\delta)\le\delta$. By the compactness of $\bK$, there must be an accumulation point $\bP\in \bK$ as $\delta\to0$, which in this way does not belong to any of the limit cycles in $\bbP'$. 
We thus have that 
$$
E_0(\bu_\delta)\le\delta,\quad \bu_\delta(0)\in \bK,\quad \bu_\delta(0)\to \bP\hbox{ as }\delta\to0,\quad \bP\in \bK,\quad \bu_\delta\in\bbO_d.
$$
Put
$$
Z_\delta(t)=P(\bu_\delta)y'_\delta-Q(\bu_\delta)x'_\delta,\quad \bu_\delta=(x_\delta, y_\delta),
$$
a concrete function of $t$, and notice that $Z_\delta\to0$ in $L^2(0, 1)$, as $\delta\to0$, precisely because $E_0(\bu_\delta)\to0$. The path $\bu_\delta$ turns out to be the unique solution of the problem
$$
P(X, Y)Y'-Q(X, Y)X'=Z_\delta(t)\hbox{ in }[0, 1],\quad (X(0), Y(0))=\bu_\delta(0).
$$
By the indicated convergence of the right-hand side $Z_\delta$, the fact that $\bu_\delta\in\bbO_d$ for all $\delta$ and some fixed $d$, and the convergence of $\bu_\delta(0)$, $\bu_\delta$ must converge to the unique solution $\bu=(x, y)$ of
$$
P(x, y)y'-Q(x, y)x'=0\hbox{ in }[0, 1],\quad (x(0), y(0))=\bP\in\bK.
$$
This is not possible since we would have $E_0(\bu)=0$ and $\bu(0)=\bP$, and there is no limit cycle starting at the set $\bK$. 
This is the sought contradiction, and ends the proof. 

Note that if we would allow the possibility that $d$ might grow indefinitely as $\delta\to0$, the claimed convergence above would be wrong in general. This is, in fact, the main reason why one is forced to work on subsets $\bbO_d$ for fixed (though possibly large) $d$. 
\end{proof}

If we now realize that  
\begin{equation}\label{inclusion}
\{E_0=0\}\cap\bbO_d\subset \{E_\e\le a\}\cap\bbO_d\subset \{E_0\le a\}\cap\bbO_d,
\end{equation}
for arbitrary $a>0$, $d\in\N$, and $\e$ sufficiently small (depending on $a$), we see that for $a$ sufficiently small and every large $d$, once we focus on an arbitrary finite subset $\bbP'$, components of the set on the left-hand side identify in a one-to-one manner components of the intermediate level set according to the above lemma. 

\subsubsection{Components of $\{E_\e\le a\}\cap\bbO_d$ are topologically equivalent to a ball}
Recall that this set includes all $1$-periodic, $\cC^1$-, non-singular parameterizations of paths made up of pieces of integral curves with absolute winding number not greater than $d$. In particular, limit cycles of our differential system admit a parametrization $\bu(t)$ belonging to $\bbO_d$ for some $d$. Because they are isolated from each other, different limit cycles of a finite subset belong to different connected components of $\{E_0=0\}\cap\bbO_d$ for some large enough $d$. 

Given any $1$-periodic path 
$$
\bu(t)\in\bbO_d\cap\{E_0=0\},
$$ 
the connected component determined by it corresponds to the various reparametrizations of $\bu(t)$ of the form  $\bu(\sigma(s))$ for a suitable class of functions $\sigma(s)$ to maintain it within $\bbO_d$. More specifically, we consider 
$$
\Lambda=\left\{\sigma(s): [0, 1]\to\R, \sigma', \hbox{abs. continuous},
\sigma'>0, \int_0^1\sigma'(s)\,ds=1\right\}.
$$
Note that $\Lambda$ is a convex set. Recall that curves in $\bbO$ are $\cC^1$ and cannot have a vanishing tangent vector, hence they can never turn around.

\begin{lemma}\label{componentes}
Let $E_0(\bu)=0$, for $\bu\in\espacioo\cap\bbO_d$. 
\begin{enumerate}
\item The component of $\{E_0=0\}$ in $\bbO_d$ determined by $\bu$ is
$$
\Lambda_\bu\equiv\{\bu(\sigma(s)): \sigma\in\Lambda\}.
$$
\item The set $\Lambda_\bu$ is topologically equivalent to a ball.
\end{enumerate}
\end{lemma}
\begin{proof}
The first assertion is clear because winding and absolute winding numbers are not related to parameter dependence, but only on taken-on values of the normalized tangent vectors.  
For the second part, note that because $\Lambda$ is a convex set, it is topologically equivalent to a ball. $\Lambda_\bu$, being the image under a continuous mapping of $\Lambda$, must also be topologically equivalent to a ball.
\end{proof}
We next treat the family of perturbed functionals $E_\e$.
\begin{lemma}\label{componenteperturbadoo}
Let $\bbC_d$ be a component of $\{E_0=0\}$ in $\bbO_d$, topologically equivalent to a ball, according to Lemma \ref{componentes}, which determines a unique component $\bbC_{a, \e, d}$ of $\{E_\e\le a\}\cap\bbO_d$ according to Lemma \ref{componenteperturbado} and \eqref{inclusion}. For appropriate $a$, $d$ and $\e$,  $\bbC_{a, \e, d}$ is topologically equivalent to a ball.
\end{lemma}
\begin{proof}
Because $\bbC_d$ is smooth and topologically equivalent to a ball, a smooth one-to-one map $\bbT:\bbH\to\bbH$ can be found so that $\bbT^{-1}(\bbC_d)$ is smooth and convex. The composition functional $E_0\circ\bbT$, being smooth, non-negative, and with zero set $\bbT^{-1}(\bbC_d)$, is therefore convex in a vicinity of $\bbT^{-1}(\bbC_d)$. The sum
$$
E_\e\circ\bbT(\bu)=E_0\circ\bbT(\bu)+\frac\e2\|\bbT(\bu)-\bv_0\|^2
$$
might not be convex. Yet the two functionals
$$
\frac\e2\|\bbT(\bu)-\bv_0\|^2,\quad \frac\e2\|\bu-\bv_0\|^2
$$
share the same set of critical points (because $\bbT$ is a one-to-one smooth map) which is the singleton $\{\bv_0\}$. This in turn implies that the two level sets
$$
\{E_\e\circ\bbT\le a\},\quad \{E_0\circ\bbT+\frac\e2\|\bu-\bv_0\|^2\le a\}
$$
are topologically equivalent. Since the functional in the second level set is convex for $a$ small, it is equivalent topologically equivalent to a ball; and hence so is the first. Finally
$$
\bbT\left(\{E_\e\circ\bbT\le a\}\right)=\{E_\e\le a\}
$$
will be equivalent to a ball too. Everything takes place in $\bbO_d$. 
\end{proof}

\subsection{The derivatives of $E_0$}
Our initial calculations focus on looking at the first-order and second-order derivatives of the base functional $E_0$. 

\subsubsection{The first derivative}
For our functional $E_0$, it is easy to find an expression for
$$
\langle E'_0(\bu), \bU\rangle,\quad \bu, \bU\in \espacioo.
$$
Indeed, by definition we have
\begin{equation}\label{derivada}
\langle E'_0(\bu), \bU\rangle=\left.\frac d{d\tau}\right|_{\tau=0}
E_0(\bu+\tau\bU).
\end{equation}
Since
$$
E_0(\bu+\tau\bU)=\frac12\int_0^1[\bF^\perp(\bu+\tau\bU)\cdot(\bu'+\tau\bU')]^2\,dt,
$$
then from \eqref{derivada} we have
\begin{equation}\label{prider}
\langle E'_0(\bu), \bU\rangle=\int_0^1(\bF^\perp(\bu)\cdot\bu')
[(D\bF^\perp(\bu)\bU)\cdot \bu'+\bF^\perp(\bu)\cdot\bU']\,dt.
\end{equation}
We are, therefore, seeking an element 
$$
\bv(=E'_0(\bu))\in\espacioo
$$
such that
$$
\bv\cdot\bU=\langle E'_0(\bu), \bU\rangle,
$$
 that is to say
\begin{equation}\label{formadebil0}
\int_0^1(\bv\cdot\bU+\bv'\cdot\bU'+\bv''\cdot\bU'')\,dt=\int_0^1(\bF^\perp(\bu)\cdot\bu') [(D\bF^\perp(\bu)\bU)\cdot
\bu'+\bF^\perp(\bu)\cdot\bU']\,dt
\end{equation}
for every 
$$
\bU\in\espacioo.
$$ 
There is a unique such $\bv$, which
turns out to be the minimizer (with respect to 
$$
\bU\in\espacioo)
$$
of the augmented functional
\begin{equation}\label{cuad}
\int_0^1[\frac12|\bU''|^2+\frac12|\bU'|^2+\frac12|\bU|^2-
(\bF^\perp(\bu)\cdot\bu') [(D\bF^\perp(\bu) \bU)\cdot\bu'+
\bF^\perp(\bu)\cdot\bU']]\,dt.
\end{equation}
The existence of a unique minimizer for this problem, which is
quadratic, is a direct consequence of the classical Lax-Milgram Theorem (see
Corollary 5.8 of \cite{brezis} for instance).

Therefore the equation for 
$$
\bv=E'_0(\bu)\in\espacioo
$$ 
will be the
associated Euler-Lagrange system for the functional \eqref{cuad} as
it is given by this last theorem
\begin{equation}\label{derivadas}
[\bv'']''-[\bv'+(\bF^\perp(\bu)\cdot\bu')\bF^\perp(\bu)]'+\bv+
(\bF^\perp(\bu)\cdot\bu')\bu'^T D\bF^\perp(\bu)=\bc\hbox{ in }(0, 1).
\end{equation}
Its weak formulation is exactly \eqref{formadebil0}.

\subsubsection{The second derivative}
If, in general, we have a certain smooth, $\C^2$-functional
$E:\H\to\R$ over a Hilbert space $\H$ with derivative
$E':\H\to\H$, there are various ways to deal with the second
derivative, but probably the best suited for our purposes is to
consider the derivative
\begin{equation*}\label{derseg}
\langle E''(\bu), (\bU, \overline\bU)\rangle=\left.\frac d{d\delta}
\right|_{\delta=0} \langle E'(\bu+\delta\bU), \overline\bU\rangle,
\end{equation*}
where both vector fields $\bU$ and $\overline\bU$ belong to $\bbH$. In
our situation, and in view of (\ref{prider}), we have
\[
\begin{array}{rl}
\langle E''_0(\bu), (\bU, \overline\bU)\rangle=&\left. \displaystyle
\frac d {d\delta}\right|_{\delta=0} \displaystyle \int_0^1
(\bF^\perp(\bu+\delta\bU)\cdot(\bu'+\delta\bU'))\times\\
&\qquad\qquad[\nabla\bF^\perp(\bu+\delta\bU)\overline\bU\cdot(\bu'+\delta\bU')
+\bF^\perp(\bu+\delta\bU)\cdot\overline\bU']\,dt\\
=&\displaystyle\int_0^1(\bF^\perp(\bu)\cdot\bu')[\nabla^2\bF^\perp(\bu):(\bU,
\overline\bU, \bu')+\nabla\bF^\perp(\bu):(\overline\bU, \bU')\\
&\qquad \qquad \qquad  +
\nabla\bF^\perp(\bu):(\bU, \overline\bU')]\,dt\\
&+\displaystyle \int_0^1[\nabla\bF^\perp(\bu)\bU\cdot\bu'+
\bF^\perp(\bu)\cdot\bU']\times \\
&\qquad\qquad[\nabla\bF^\perp(\bu)\overline\bU\cdot\bu'+
\bF^\perp(\bu)\cdot\overline\bU']\,dt
\end{array}
\]
Through this long formula, we can understand, for such a $\bu$ given
and fixed, the linear operator 
$$
E''_0(\bu):\espacioo\to\espacioo. 
$$
Let 
$$
\bU\in\espacioo
$$ 
be given. The image 
$$
\bV=E''_0(\bu)\bU\in\espacioo
$$
of $\bU$ under the linear map $E''_0(\bu)$ is determined through the identity
\begin{equation}\label{segunder}
\langle E''_0(\bu), (\bU, \overline\bU)\rangle=\langle E''_0(\bu)\bU,
\overline\bU\rangle=\langle\bV, \overline\bU\rangle
\end{equation}
for all 
$$
\overline\bU\in\espacioo.
$$ 
The element $\bV$ defined through
(\ref{segunder}) is the solution of a standard quadratic variational
problem for which the weak form of its optimality condition is
precisely (\ref{segunder}). 

\subsection{Compactness} 
The calculations in the preceding subsection enable us to prove the following facts.

\subsubsection{Compactness of $E'_0$}\label{compac}

\begin{lemma}\label{compacidad}
Let $\{\bu_j\}$ be a uniformly bounded sequence in $\espacioo$, and
$\{\bv_j\}$ the sequence of derivatives
$$
\bv_j=E'_0(\bu_j)\in\espacioo
$$ 
which are solutions of
\eqref{derivadas} for $\bu=\bu_j$. Then $\{\bv_j\}$ is relatively
compact in $\espacioo$.
\end{lemma}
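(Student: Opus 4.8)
The plan is to exploit the mismatch of derivative orders in the weak formulation \eqref{formadebil}: although each $\bv_j=E'_0(\bu_j)$ lies in $\espacioo$ and is extracted via the full $H^2$ inner product (which sees $\bU''$), the right-hand side of \eqref{formadebil} tests $\bU$ only against $\bU$ and $\bU'$. This ``wasted'' top-order derivative is exactly what produces compactness, so I would work entirely with the weak form and Riesz representation, never needing the strong Euler--Lagrange system \eqref{derivadas}.

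First I would repackage the right-hand side of \eqref{formadebil}, for $\bu=\bu_j$, as a single bounded linear functional on $\espacioo$,
$$
L_j(\bU)=\int_0^1\big(\nA_j\cdot\bU+\nB_j\cdot\bU'\big)\,dt,\qquad \bU\in\espacioo .
$$
Writing $g_j=\F^\perp(\bu_j)\cdot\bu_j'$ and using the identity $(D\F^\perp(\bu_j)\bU)\cdot\bu_j'=\bU\cdot(D\F^\perp(\bu_j))^T\bu_j'$, one simply reads off the first-order coefficients
$$
\nA_j=g_j\,(D\F^\perp(\bu_j))^T\bu_j',\qquad \nB_j=g_j\,\F^\perp(\bu_j).
$$
By definition $\bv_j$ is the Riesz representative of $L_j$, i.e. $\langle\bv_j,\bU\rangle=L_j(\bU)$ for all $\bU$. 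Since the Riesz map is a linear isometry between $\espacioo$ and its dual, I get the key identity $\|\bv_j-\bv_k\|=\|L_j-L_k\|_{(\espacioo)^*}$, reducing the claim to relative compactness of $\{L_j\}$ in the dual norm.

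Next comes the decisive estimate. For any indices and any $\bU$,
$$
|(L_j-L_k)(\bU)|\le\|\nA_j-\nA_k\|_{L^2}\|\bU\|_{L^2}+\|\nB_j-\nB_k\|_{L^2}\|\bU'\|_{L^2},
$$
and since $\|\bU\|_{L^2},\|\bU'\|_{L^2}\le\|\bU\|$, taking the supremum over the unit ball yields
$$
\|L_j-L_k\|_{(\espacioo)^*}\le\|\nA_j-\nA_k\|_{L^2}+\|\nB_j-\nB_k\|_{L^2}.
$$
The crucial feature is that only $\bU$ and $\bU'$ occur, so the dual norm of $L_j$ is controlled by the $L^2$-size of the coefficients alone. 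To close the argument I would invoke the compact embedding of $\espacio=H^2([0,1];\R^2)$ into $\C^1([0,1];\R^2)$ (the one-dimensional Rellich--Kondrachov embedding, whose convergence statement is Proposition \ref{convergenciauniforme}): as $\{\bu_j\}$ is uniformly bounded in $\espacio$, any subsequence admits a further subsequence with $\bu_j\to\bu$ and $\bu_j'\to\bu'$ uniformly on $[0,1]$. Because $P,Q$ are polynomials, $\F^\perp(\bu_j)\to\F^\perp(\bu)$ and $D\F^\perp(\bu_j)\to D\F^\perp(\bu)$ uniformly, hence $g_j\to g:=\F^\perp(\bu)\cdot\bu'$ uniformly, and therefore $\nA_j\to\nA:=g\,(D\F^\perp(\bu))^T\bu'$ and $\nB_j\to\nB:=g\,\F^\perp(\bu)$ uniformly, in particular in $L^2$. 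By the displayed estimate $\{L_j\}$ is then Cauchy along this sub-subsequence, so $L_j\to L$ in $(\espacioo)^*$ with $L(\bU)=\int_0^1(\nA\cdot\bU+\nB\cdot\bU')\,dt$; letting $\bv$ be the Riesz representative of $L$ gives $\bv_j\to\bv$ in $\espacioo$. As every subsequence of $\{\bv_j\}$ thus has a convergent sub-subsequence, $\{\bv_j\}$ is relatively compact.

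I expect no serious computational obstacle; the content of the lemma is the conceptual point that $E'_0$ is a \emph{compact} operator, smoothing $H^2$-bounded inputs into $H^2$-convergent representatives precisely because its weak form only involves the coefficients $\nA_j,\nB_j$ built from $\bu_j$ and $\bu_j'$. The only technical care needed is to secure uniform $L^\infty$-bounds on $\nA_j,\nB_j$ (immediate from the uniform $\C^1$-bound on $\bu_j$ and polynomiality of $P,Q$) so that uniform convergence genuinely upgrades to $L^2$-convergence, and to phrase the conclusion as relative compactness via the standard subsequence-of-subsequence argument rather than convergence of the full sequence.
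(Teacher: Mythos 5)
Your proposal is correct and is essentially the paper's own argument: the paper derives the same key estimate $\|\bv_j-\bv\|_{H^2}\le\|\bG_j-\bG\|_{L^2}+\|\bH_j-\bH\|_{L^2}$ (your $\nB_j,\nA_j$ are exactly its $\bG_j,\bH_j$) by subtracting the two weak formulations and testing with $\bv_j-\bv$, and then closes with the Rellich--Kondrachov compactness of $H^2\subset H^1$, just as you do. Your packaging via the Riesz isometry and a Cauchy/sub-subsequence argument is a minor stylistic variant that avoids naming the limit, but the mechanism --- the right-hand side of \eqref{formadebil} only sees $\bU$ and $\bU'$, so uniform convergence of $\bu_j,\bu_j'$ suffices --- is identical.
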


\begin{proof}
For the sake of notation, set
$$
\bG_j\equiv(\bF^\perp(\bu_j)\cdot\bu'_j)\bF^\perp(\bu_j),\quad \bH_j
\equiv(\bF^\perp(\bu_j)\cdot\bu'_j)\bu'^T_j D\bF^\perp(\bu_j).
$$
If $\{\bu_j\}$ is uniformly bounded in $\espacioo$, we know that a
certain subsequence (not relabelled) of $\{\bu_j\}$ converges weakly
to some $\bu$ in $\espacio$. Set
$$
\bG\equiv(\bF^\perp(\bu)\cdot\bu')\bF^\perp(\bu),\quad \bH
\equiv(\bF^\perp(\bu)\cdot\bu')\bu'^T D\bF^\perp(\bu).
$$
If we put 
$$
\bv_j=E'_0(\bu_j),\quad \bv=E'_0(\bu), 
$$
then
\eqref{formadebil0} implies
\begin{gather}
\int_0^1[\bv''_j\cdot\bU''+(\bv'_j+\bG_j)\cdot\bU'+(\bv_j+\bH_j)\cdot\bU]\,dt=0,\nonumber\\
\int_0^1[\bv''\cdot\bU''+(\bv'+\bG)\cdot\bU'+(\bv+\bH)\cdot\bU]\,dt=0,\nonumber
\end{gather}
for every 
$$
\bU\in\espacioo.
$$ 
By substracting one from the other
$$
\int_0^1[(\bv''_j-\bv'')\cdot\bV''+(\bv'_j-\bv'+\bG_j-\bG)\cdot\bV'+(\bv_j-\bv+\bH_j-\bH)\cdot\bV]\,dt=0
$$
for every 
$$
\bU\in\espacioo.
$$ 
We can take $\bU=\bv_j-\bv$ to find that
$$
\int_0^1[|\bv''_j-\bv''|^2+(\bv'_j-\bv'+\bG_j-\bG)\cdot(\bv'_j-\bv')+(\bv_j-\bv+\bH_j-\bH)\cdot(\bv_j-\bv)]
\,dt=0.
$$
This equality can be reorganized as
$$
\|\bv_j-\bv\|^2_{\espacio}=-\int_0^1[(\bG_j-\bG)\cdot(\bv'_j-\bv')+
(\bH_j-\bH)\cdot(\bv_j-\bv)]\,dt,
$$
Hence, by the standard H\"older inequality for integrals, we can also
have
\begin{equation}\label{desig}
\|\bv_j-\bv\|_{\espacio}\le \|\bG_j-\bG\|_{L^2([0, 1]; \R^2)}+\|
\bH_j-\bH\|_{L^2([0, 1]; \R^2)}.
\end{equation}
Since the weak convergence of $\bu_j$ to $\bu$ in $\espacioo$ implies
weak convergence up to second derivatives, by the classical
Rellich--Kondrachov Theorem, which implies that the injection $W^{2,
p}\subset W^{1, p}$ is always compact (see Theorem 9.16 in
\cite{brezis} for the case with first derivatives $W^{1, p}\subset
L^p$), we conclude the convergences
$$
\bG_j\to\bG,\quad \bH_j\to\bH
$$
 strongly in
$L^2([0, 1]; \R^2)$. The proof is then a direct consequence of
\eqref{desig}.
\end{proof}

As a direct consequence of Lemma \ref{compacidad}, we have:

\begin{corollary}\label{cc}
The map 
$$
E'_0:\espacioo\to\espacioo
$$ 
is compact.
\end{corollary}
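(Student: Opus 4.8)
The plan is to read this Corollary as a direct repackaging of Lemma \ref{compacidad} in the language of compact operators, so that essentially no new analysis is required. First I would recall the pertinent definition: a continuous (generally nonlinear) operator $T:\espacioo\to\espacioo$ is \emph{compact} if it sends bounded sets to relatively compact sets, equivalently if for every bounded sequence $\{\bu_j\}$ in $\espacioo$ the image sequence $\{T(\bu_j)\}$ possesses a strongly convergent subsequence. It therefore suffices to verify this sequential property for $T=E'_0$, together with the continuity of $E'_0$.

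Next I would fix an arbitrary bounded sequence $\{\bu_j\}$ in $\espacioo$, say $\|\bu_j\|\le R$ for all $j$, and look at the associated images $\bv_j=E'_0(\bu_j)$, each being the unique Riesz representative that solves the weak equation \eqref{formadebil} (equivalently the Euler--Lagrange system \eqref{derivadas}). Since $\{\bu_j\}$ is uniformly bounded, Lemma \ref{compacidad} applies verbatim and tells us that $\{\bv_j\}$ is relatively compact in $\espacioo$, hence admits a strongly convergent subsequence. Because the bounded sequence was arbitrary, this is precisely the sequential characterization of compactness for $E'_0$.

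To settle the set-theoretic bookkeeping, I would observe that any sequence extracted from $E'_0(B)$, for a bounded $B\subset\espacioo$, has the form $\{E'_0(\bu_j)\}$ with $\{\bu_j\}\subset B$ bounded, so the previous step furnishes a convergent subsequence; thus $\overline{E'_0(B)}$ is sequentially compact, which in the Hilbert space $\espacioo$ is the same as compactness. For continuity, I would point out that the proof of Lemma \ref{compacidad} actually established the stronger fact that $\bu_j\rightharpoonup\bu$ weakly forces $E'_0(\bu_j)\to E'_0(\bu)$ strongly; since strong convergence of arguments implies weak convergence, this already yields strong continuity of $E'_0$. The same complete continuity also re-derives compactness directly, as any bounded sequence in the reflexive space $\espacioo$ has a weakly convergent subsequence, which $E'_0$ then maps to a strongly convergent one.

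Since the genuine analytic work---the uniform $L^2$-control of the nonlinear terms $\bG_j$ and $\bH_j$ through the compact embedding $H^2\hookrightarrow H^1$ and the energy identity for $\bv_j-\bv$---was already carried out inside Lemma \ref{compacidad}, I do not expect any real obstacle in this Corollary. The only mildly delicate point is the routine passage from ``every bounded sequence has a subsequence of convergent images'' to ``the image of every bounded set is relatively compact,'' which is standard in the metric setting and causes no difficulty here.
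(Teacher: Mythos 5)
Your proposal is correct and matches the paper, which derives Corollary \ref{cc} immediately from Lemma \ref{compacidad} with no additional argument; your extra remarks on the sequential characterization of compactness and the complete continuity implicit in the lemma's proof are accurate but routine.
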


This compactness property is the only reason why the functional $E_0$
has to be perturbed by a norm involving up to second derivatives. If
we had just perturbed $E_0$ up to first derivatives, we would not
have the strong convergence of the vector fields $\bG_j$ and $\bH_j$
to $\bG$ and $\bH$, respectively, in the proof of Lemma
\ref{compacidad}.

\subsubsection{Compactness of $E''_0(\bu)$}
Another main necessary ingredient is the
compactness of the linear operator 
$$
E''_0(\bu):\espacioo\to\espacioo
$$
for each fixed $\bu$.

The form \eqref{segunder} is especially suited to show
the compactness we are after. Set
\[
\begin{array}{rl}
\overline\bF=&(\bF^\perp(\bu)\cdot\bu')\nabla^2\bF^\perp(\bu):\bu'+
\nabla\bF^\perp(\bu)\bu'\otimes\nabla\bF^\perp(\bu)\bu',\\
\overline\bG=&\nabla\bF^\perp(\bu)+\bF^\perp(\bu)\otimes\nabla
\bF^\perp(\bu)\bu',\\
\overline\bH=&\nabla\bF^\perp(\bu)+\nabla\bF^\perp(\bu)\bu'\otimes
\bF^\perp(\bu),\\
\overline\bJ=&\bF^\perp(\bu)\otimes\bF^\perp(\bu).
\end{array}
\]
This choice is dictated so that
$$
\langle E''_0(\bu), (\bU,
\overline\bU)\rangle=\int_0^1[\overline\bF(\bU,
\overline\bU)+\overline\bG(\bU', \overline\bU)+\overline\bH(\bU,
\overline\bU')+\overline\bJ(\bU', \overline\bU')]\,dt.
$$
Exactly as in Lemma \ref{compacidad}, one can show the following.
\begin{lemma}
For fixed, given 
$$
\bu\in\espacioo,
$$ 
the operator
$$
\bU\mapsto\bV=E''_0(\bu)\bU
$$ 
is self-adjoint and compact.
\end{lemma}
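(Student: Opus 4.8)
The plan is to treat the two assertions separately: self-adjointness will follow from the symmetry of the second Fr\'echet derivative, and compactness will be obtained by reproducing verbatim the mechanism of Lemma~\ref{compacidad}, the crucial structural feature being that $\langle E''_0(\bu),(\bU,\overline\bU)\rangle$ involves the arguments $\bU$ and $\overline\bU$ only through their values and \emph{first} derivatives.

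For self-adjointness it is enough to note that $\langle E''_0(\bu)\bU,\overline\bU\rangle=\langle E''_0(\bu),(\bU,\overline\bU)\rangle$ is symmetric in the pair $(\bU,\overline\bU)$. This is automatic, since it is the second derivative of the $\C^2$-functional $E_0$ at $\bu$ and Schwarz's theorem on mixed Fr\'echet derivatives applies; it can moreover be read directly from the displayed bilinear expression, in which $\overline\F$ and $\overline\bJ$ enter as symmetric tensors while the cross terms carrying $\overline\bG$ and $\overline\bH$ are arranged so as to be invariant under the interchange $\bU\leftrightarrow\overline\bU$. Combining this symmetry with the Riesz identity $\langle\bV,\overline\bU\rangle=\langle E''_0(\bu),(\bU,\overline\bU)\rangle$ that defines $\bV=E''_0(\bu)\bU$ gives $\langle E''_0(\bu)\bU,\overline\bU\rangle=\langle\bU,E''_0(\bu)\overline\bU\rangle$ for all $\bU,\overline\bU\in\espacioo$, i.e. $E''_0(\bu)$ is self-adjoint.

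For compactness I would first record that, because $\bu\in\espacioo$ is $\C^1$ by Proposition~\ref{convergenciauniforme}, the functions $\bu$ and $\bu'$ are bounded on $[0,1]$, and hence so are the coefficient tensors $\overline\F,\overline\bG,\overline\bH,\overline\bJ$, which are polynomial expressions in $\bu$, $\bu'$ and the derivatives of $\F^\perp$. Collecting on the right-hand side of the defining identity the terms paired with $\overline\bU$ into a vector field $\nA(\bU)$ and those paired with $\overline\bU'$ into a vector field $\nB(\bU)$ --- both linear in $\bU$ and $\bU'$ through these bounded coefficients --- the equation for $\bV=E''_0(\bu)\bU$ takes the form
$$\int_0^1\big((\bV-\nA(\bU))\cdot\overline\bU+(\bV'-\nB(\bU))\cdot\overline\bU'+\bV''\cdot\overline\bU''\big)\,dt=0\qquad\text{for all }\overline\bU\in\espacioo,$$
where, decisively, no $\overline\bU''$-term appears on the right because $E_0$ depends on $\bu$ only through first derivatives. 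Now take a bounded sequence $\{\bU_j\}$ in $\espacioo$ and a weakly convergent subsequence $\bU_j\rightharpoonup\bU$; set $\bV_j=E''_0(\bu)\bU_j$ and $\bV=E''_0(\bu)\bU$. By the Rellich--Kondrachov theorem (equivalently, by Proposition~\ref{convergenciauniforme}) one has $\bU_j\to\bU$ and $\bU_j'\to\bU'$ uniformly, hence in $L^2$, and since the coefficients are bounded it follows that $\nA(\bU_j)\to\nA(\bU)$ and $\nB(\bU_j)\to\nB(\bU)$ strongly in $L^2([0,1];\R^2)$.

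Subtracting the equations for $\bV_j$ and $\bV$ and testing with $\overline\bU=\bV_j-\bV$ then yields, exactly as in Lemma~\ref{compacidad},
$$\|\bV_j-\bV\|^2_{\espacio}=\int_0^1\big((\nA(\bU_j)-\nA(\bU))\cdot(\bV_j-\bV)+(\nB(\bU_j)-\nB(\bU))\cdot(\bV_j'-\bV')\big)\,dt,$$
so that H\"older's inequality gives $\|\bV_j-\bV\|_{\espacio}\le\|\nA(\bU_j)-\nA(\bU)\|_{L^2([0, 1]; \R^2)}+\|\nB(\bU_j)-\nB(\bU)\|_{L^2([0, 1]; \R^2)}\to0$. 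Thus every bounded sequence has a subsequence whose image under $E''_0(\bu)$ converges strongly, which is precisely compactness. The one delicate point --- the main obstacle --- is the bookkeeping in the regrouping step: one must check that, after testing with $\bV_j-\bV$, the right-hand side contains \emph{only} the source differences $\nA(\bU_j)-\nA(\bU)$ and $\nB(\bU_j)-\nB(\bU)$ paired against $\bV_j-\bV$ and $\bV_j'-\bV'$, while the full $\espacio$-norm of $\bV_j-\bV$ reassembles cleanly on the left. This succeeds only because $E_0$ is first order in $\bu'$, so that no second derivative of the test field survives on the right --- the very same mechanism that, as noted after Corollary~\ref{cc}, compels the use of the $H^2$-perturbation of $E_0$ from the outset.
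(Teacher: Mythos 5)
Your proof is correct and follows essentially the same route as the paper: the compactness argument is exactly the mechanism of Lemma~\ref{compacidad}, hinging on the fact that the weak formulation defining $\bV=E''_0(\bu)\bU$ carries no $\overline\bU''$-term on the source side, so only $\bU$ and $\bU'$ (which converge strongly by the compact embedding) enter. The paper's own proof compresses this into the one-line estimate $\|\bV_j-\bV\|\le\|E''_0(\bu)(\bU_j-\bU)\|$ and does not spell out the self-adjointness verification, which your symmetry observation on the bilinear form supplies correctly.
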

\begin{proof}
Assume $\{\bU_j\}$ is bounded in $\espacioo$. In particular, and for
reasons already pointed out earlier, $\bU'_j\to\bU'$ uniformly for
some 
$$
\bU\in\espacioo.
$$ 
Let $\bV_j$ and $\bV$ determined through
\eqref{segunder}, respectively. Then
$$
\|\bV_j-\bV\|^2=\langle E''_0(\bu)(\bU_j-\bU), \bV_j-\bV\rangle,
$$
and
$$
\|\bV_j-\bV\|\le \|E''_0(\bu)(\bU_j-\bU)\|.
$$
The key point is to realize, in the formulas above, that in
$E''_0(\bu)(\bU_j-\bU)$ only up to first derivatives of the
differences $\bU_j-\bU$ occur, and these converge strongly to zero.
Hence 
$$
\|\bV_j-\bV\|\to0.
$$
\end{proof}

\subsection{What we have so far}\label{sofar}
At this stage, we have covered all ingredients to have the following statements. Recall the form of $E_0$ in \eqref{formofe0}. 

\begin{theorem}\label{finitud}
There is 
$\bv_0\in\bbO_1$
such that the perturbed functional
\begin{equation}\label{perturbacionparticular}
E_\e(\bu)=E_0(\bu)+\frac\e2\|\bu-\bv_0\|^2
\end{equation}
is a Morse functional. The auxiliary path $\bv_0$ can be chosen as regular as it may be necessary. 
\end{theorem}
After our work in the two preceding subsections, this result is a direct consequence of Theorem \ref{central}. There is nothing to be added.

We are also entitled to apply Theorem \ref{morsefinall} to our particular situation and conclude the following.

\begin{theorem}
Let $\bbP$ be any arbitrary, finite subset of components of $\{E_0=0\}$ in $\bbO$, and  let $M_{cri, \e, a}$ stand for the number of critical paths of $E_\e$ in 
$\{E_\e\ge a\}\cap\bbO$. Then
\begin{equation}\label{cotapreliminar}
\#(\bbP)\le 1+\lim_{a\to0}\lim_{\e\to0}M_{cri, \e, a}.
\end{equation}
\end{theorem}

The combination of Lemmas \ref{componenteperturbado},  Proposition \ref{invarianza}, and Lemma \ref{componentes}, together with Theorem \ref{finitud}, imply, after Theorem \ref{morsefinall}, that  for every finite subset $\bbP$ of limit cycles of our initial planar, polynomial differential system \eqref{e1}, we have the upper bound \eqref{cotapreliminar}.
Our final fundamental job is to show that there is an upper bound $M_{cri}$ for the right-hand side of \eqref{cotapreliminar}, independent of $a$ and $\e$, in terms of the degree $n$ of our initial differential system \eqref{e1}. 

\subsection{The equation for  critical closed paths}\label{aqui}
The application of Theorem \ref{basicsecond} to our situation where
\begin{align}
F(t, \bu, \bz, \bZ)=&\frac12(\bF^\perp(\bu)\cdot\bz)^2+\frac\e2(|\bZ|^2+|\bz|^2+|\bu|^2)\nonumber\\
&+\bu\cdot\bv_0(t)+\bz\cdot\bv'_0(t)+\bZ\cdot\bv''_0(t)\nonumber
\end{align}
 is our first key step. Note that we have dropped out the constant term 
 $$
 \frac1{2\e}\|\bv_0\|^2
 $$ 
 from $E_\e$ as it does not play a role in what follows, and that for aesthetic purposes we have changed $\bv_0$ to $-\bv_0$. 
The partial derivatives required in the statement of that theorem are
\begin{gather}
F_\bZ=\e\bZ+\bv''_0(t),\nonumber\\
 F_\bz=(\bF^\perp(\bu)\cdot\bz)\,\bF^\perp(\bu)+\e\bz+\bv'_0(t),\nonumber\\
F_\bu=(\bF^\perp(\bu)\cdot\bz)\,D\bF^\perp(\bu)\bz+\e\bu+\bv_0(t).\nonumber
\end{gather}
Equation \eqref{aaa} for  critical closed paths in $\espacioo$ for
$E_\e$ coming from Theorem \ref{basicsecond} involves the combination
\eqref{este} which in our case is
\begin{equation}\label{abscon}
\frac
d{dt}(\e\bu''_\e(t)+\bv''_0(t))-(\bF^\perp(\bu_\e(t))\cdot\bu'_\e(t))\,
\bF^\perp(\bu_\e(t))-\e\bu'_\e(t)-\bv'_0(t),
\end{equation}
which must be an absolutely continuous function in $[0, 1]$. Its
almost everywhere derivative ought to be, according to system
\eqref{aaa},
\begin{equation}\label{noder}
-(\bF^\perp(\bu_\e(t)\cdot\bu'_\e(t))\,
D\bF^\perp(\bu_\e(t))\bu'_\e(t)-\e\bu_\e(t)-\bv_0(t).
\end{equation}
Here 
$$
\bu_\e\in\espacioo
$$ 
is an arbitrary  critical closed path of
$E_\e$. In addition, from \eqref{saltoooe} we have
 \begin{equation}\label{ultsal}
[\e\bu''_\e(t)+\bv''_0(t)]_{t=0}=\bc.
\end{equation}
We need to examine these conditions carefully.

It is also important to stress how this
result ensures much more regularity for those  critical closed paths
precisely because they are  critical for a certain
functional. Even though paths in our ambient space are just in
$H^2([0, 1]; \R^2)$, critical closed paths of our family of functionals are much more regular. 
By Theorem \ref{basicsecond}, the
expression in \eqref{abscon} is absolutely continuous. Since the last
three terms of \eqref{abscon} and $\bv'''_0$ are continuous, we can
conclude that $\bu_\e$ is $\C^3$ in $[0, 1]$. Moreover, due to the
fact that the derivative of \eqref{abscon} is equal to \eqref{noder},
again by Theorem \ref{basicsecond}, it follows that $\bu_\e$ is even
$\C^4$ in $[0, 1]$ because all terms in \eqref{abscon}, when
differentiated with respect to $t$, are continuous except possibly
the first one $\bu'''_\e(t)$, and such a derivative is equal to
\eqref{noder} which is continuous. Note how condition \eqref{ultsal}
is redundant with the above information.

\begin{proposition}\label{dif}
Critical closed paths 
$$
\bu_\e\in\espacioo
$$ 
of functional $E_\e$  are
$\C^\infty$ in $[0, 1]$, and are solutions of the system
\begin{gather}
\e(\bu''''_\e-\bu''_\e+\bu_\e) -\frac
d{dt}[(\bF^\perp(\bu_\e)\cdot\bu'_\e)\,\bF^\perp(\bu_\e)]+(\bF^\perp
(\bu_\e)\cdot\bu'_\e)\,
(\bu'_\e)^TD\bF^\perp(\bu_\e)\label{formafinal}\\
=-\bv''''_0+\bv''_0-\bv_0\nonumber
\end{gather}
in the interval $[0, 1]$.
\end{proposition}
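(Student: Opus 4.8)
The plan is to apply Theorem \ref{basicsecond} to the integrand $F$ exhibited just before the statement, and then to run a regularity bootstrap that upgrades the conclusion of that theorem to classical smoothness while simultaneously producing the strong form \eqref{formafinal}.

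First I would record the three partial derivatives $F_\bu$, $F_\bz$, $F_\bZ$ already computed above and verify the integrability hypotheses of Theorem \ref{basicsecond}. The key observation is that neither $F_\bu$ nor $F_\bz$ contains the slot $\bZ$: the second derivative $\bu''$ enters only through $F_\bZ=\e\bZ+\bv''_\e$. Since every $\bu\in\espacioo$ is $\C^1$ by Proposition \ref{convergenciauniforme} and $\bv_\e$ is $\C^\infty$, both $F_\bu(t,\bu,\bu',\bu'')$ and $F_\bz(t,\bu,\bu',\bu'')$ are continuous, hence trivially in $L^1$, so the hypotheses hold for every feasible path. Theorem \ref{basicsecond} then tells us that for any critical closed path $\bu_\e$ the combination \eqref{abscon} is absolutely continuous on $[0,1]$ with almost-everywhere derivative \eqref{noder}, and that the jump condition \eqref{ultsal} holds.

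Next comes the heart of the argument, the two-step gain of regularity. Because \eqref{abscon} is absolutely continuous it is in particular continuous; its last three terms together with $\bv'''_\e$ are continuous, as they involve only $\bu_\e\in\C^1$ and the smooth data $\bv_\e$, so the remaining term $\e\bu'''_\e$ must be continuous, whence $\bu_\e\in\C^3$. With this in hand the field \eqref{noder} is continuous, all of its terms now depending only on $\bu_\e\in\C^3$ and on $\bv_\e$, and since it equals the derivative of \eqref{abscon} almost everywhere, the only not-yet-continuous term of that derivative, namely $\e\bu''''_\e$, is forced to be continuous as well; thus $\bu_\e\in\C^4$. At this point \eqref{aaa} may be expanded term by term in the classical sense, and carrying out the differentiation of \eqref{abscon} and rearranging gives exactly the fourth-order system \eqref{formafinal}.

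It remains to iterate. Solving \eqref{formafinal} for $\e\bu''''_\e$ expresses the fourth derivative through the terms $\frac d{dt}[(\F^\perp(\bu_\e)\cdot\bu'_\e)\F^\perp(\bu_\e)]$ and $(\F^\perp(\bu_\e)\cdot\bu'_\e)(\bu'_\e)^TD\F^\perp(\bu_\e)$, together with $\e(-\bu''_\e+\bu_\e)$ and the smooth functions $\bv_\e$; the highest derivative of $\bu_\e$ occurring on the right is $\bu''_\e$. Hence if $\bu_\e\in\C^k$ for some $k\ge4$, every term other than $\e\bu''''_\e$ is at least $\C^{k-2}$, so $\bu''''_\e\in\C^{k-2}$ and therefore $\bu_\e\in\C^{k+2}$; starting from $\bu_\e\in\C^4$ this self-improving bootstrap yields $\bu_\e\in\C^\infty$, completing both assertions. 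I expect the main obstacle to be precisely the initial jump from the ambient regularity $H^2$, in which $\bu''_\e$ is merely square-integrable and the strong equation \eqref{formafinal} is a priori meaningless, to classical $\C^4$ regularity; everything hinges on the absolute continuity of \eqref{abscon} furnished by Theorem \ref{basicsecond}, which is what allows the first two orders to be recovered one at a time before the iteration can take over.
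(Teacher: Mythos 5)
Your proposal is correct and follows essentially the same route as the paper: apply Theorem \ref{basicsecond} to the stated integrand, use the absolute continuity of \eqref{abscon} to climb from $H^2$ to $\C^3$ and then, via the identification of its derivative with \eqref{noder}, to $\C^4$, after which \eqref{aaa} expands classically into \eqref{formafinal}. Your final iteration (solving for $\e\bu''''_\e$ and gaining two orders per step) is just an explicit rendering of the ``typical bootstrap argument'' the paper invokes without detail.
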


\begin{proof}
Regularity conditions for $\bu_\e$ have been discussed in the paragraph prior to the
statement of the proposition. Equation \eqref{formafinal} is a
consequence, according to equation \eqref{aaa}, of expressing the
equality of the derivative of \eqref{abscon} with \eqref{noder}. A typical bootstrap argument yields the regularity claimed in the statement.
\end{proof}

System \eqref{formafinal} is a key point for counting the critical
closed paths of the functional $E_\e$. We are facing a singularly-perturbed, fourth-order ODE system \eqref{formafinal} with periodic
(unknown) boundary conditions. Our plan to count, and eventually find
an upper bound for, the number of branches of solutions of \eqref{formafinal} proceeds in
two steps:
\begin{enumerate}
\item for a fixed such branch, understand its asymptotic behavior as $\e\searrow0$, to count how many such different asymptotic behaviors there might be; and
\item for a fixed such asymptotic behavior, decide how many branches may converge to it.
\end{enumerate}

To deal appropriately with this second point, we stick to the discussion in Section \ref{roleend}, and apply it to our particular situation here. The application of those ideas to our case leads to the following statements.

\begin{proposition}\label{diff}
Critical closed paths 
$$
\bu_\e\in H^2_{\by, }(0, 1]; \R^2)
$$ 
of functional $E_\e$  are $\C^2$ in $[0, 1]$, $\C^\infty$ in $(0,
1)$, and are solutions of the fourth-order differential system
\begin{gather}
\e(\bu''''_\e-\bu''_\e+\bu_\e) -\frac
d{dt}[(\bF^\perp(\bu_\e)\cdot\bu'_\e)\,\bF^\perp(\bu_\e)]+(\bF^\perp(\bu_\e)
\cdot\bu'_\e)\, (\bu'_\e)^T D\bF^\perp(\bu_\e)\label{formafinall}\\
=-\bv''''_0+\bv''_0
-\bv_0\nonumber
\end{gather}
in the interval $(0, 1)$.
\end{proposition}
Recall that 
$$
H^2_{\by, }([0, 1]; \R^2)=\{\bv\in H^2([0, 1]; \R^2): \bv(0)=\bv(1)=\by,
\bv'(0)=\bv'(1)\}.
$$
We can also perform the same analysis in the more restrictive linear manifold 
$$
H^2_{\by, \bz}([0, 1]; \R^2)=\{\bv\in H^2([0, 1]; \R^2): \bv(0)=\bv(1)=\by,
\bv'(0)=\bv'(1)=\bz\}
$$ 
for fixed vectors $\by$ and $\bz$, and find the parallel statements that follow, whose proofs can be very easily adapted from the previous ones. Note how, as we place more demands on feasible paths, optimality turns back less regularity through end-points. 

\begin{proposition}\label{difff}
Critical closed paths 
$$
\bu_\e\in H^2_{\by, \bz}([0, 1]; \R^2)
$$ 
of functional $E_\e$   are $\C^1$ in $[0, 1]$, $\C^\infty$ in $(0,
1)$, and are solutions of the fourth-order differential system
\begin{gather}
\e(\bu''''_\e-\bu''_\e+\bu_\e) -\frac
d{dt}[(\bF^\perp(\bu_\e)\cdot\bu'_\e)\,\bF^\perp(\bu_\e)]+(\bF^\perp(\bu_\e)
\cdot\bu'_\e)\, (\bu'_\e)^T D\bF^\perp(\bu_\e)\label{formafinalll}\\
=-\bv''''_0+\bv''_0
-\bv_0\nonumber
\end{gather}
in the interval $(0, 1)$.
\end{proposition}

As in Definition \ref{solutionmap}, we introduce the following.
\begin{definition}
The mapping
$$
\bu_\e(t; \bp_0, \bp_1, \bp_2, \bp_3):[0, 1]\times\R^2\times\R^2\times\R^2\times\R^2\to\R^2
$$
will designate the solution mapping for problem \eqref{formafinalll} under initial conditions
$$
\bu^{i)}_\e(0; \bp_0, \bp_1, \bp_2, \bp_3)=\bp_i,\quad i=0, 1, 2, 3.
$$
\end{definition} 
According to the shooting method in Section \ref{disparo}, and in particular as a consequence of Corollary \ref{importantez}, the following definition is legitimate.

\begin{definition}\label{mapasol}
Let 
$$
(\bp_0, \bq_0)\in\R^2\times\R^2
$$ 
be a given pair. For $(\bp, \bq)$ in a neighborhood of $(\bp_0, \bq_0)$,
$\bu(t; \bp, \bq, \e)$ will designate the unique periodic solution of system \eqref{formafinalll} such that
$$
\bu(0; \bp, \bq, \e)=\bu(1; \bp, \bq, \e)=\bp, \quad \bu'(0; \bp, \bq, \e)=\bu'(1; \bp, \bq, \e)=\bq.
$$
\end{definition}
To check that this definition is meaningful, after the above-mentioned corollary, 
simply note that the perturbed integrand 
\begin{align}
F(t, \bu, \bz, \bZ)=&\frac12(\bF^\perp(\bu)\cdot\bz)^2+\frac\e2(|\bZ|^2+|\bz|^2+|\bu|^2)\nonumber\\
&+\bu\cdot\bv_0(t)+\bz\cdot\bv'_0(t)+\bZ\cdot\bv''_0(t)\nonumber
\end{align}
for $\e$ fixed, is analytic in all its variables provided the auxiliary path $\bv_0(t)$ can be chosen analytic as well. This is no restriction, as we have remarked in various occasions that there is plenty of choice to select $\bv_0$ from: it can be chosen analytic, and uniformly bounded.

\subsection{Asymptotic behavior}\label{ab}
For the sake of transparency, and to facilitate a few interesting
computations, we recast system \eqref{formafinal} or \eqref{formafinalll} in its two components
$$
\begin{array}{r}
(ZQ)'+Z(-Q_xx'+P_xy')=-\e\alpha_1,\\
(ZP)'+Z(Q_yx'-P_yy')=\e\alpha_2.
\end{array}
$$
where
\begin{gather}
\bF=(P, Q), \quad \bu_\e=(x, y),\quad \bv_0=(X, Y)\nonumber\\
Z\equiv\bF^\perp(\bu_\e)\cdot\bu'_\e=P(x, y)y'-Q(x, y)x',\nonumber\\
W\equiv\bF(\bu_\e)\cdot\bu'_\e=P(x, y)x'+Q(x, y)y',\nonumber\\
\Dv\equiv P_x+Q_y,\quad \alpha_1=\overline x''''-\overline
x''+\overline x,\quad
 \alpha_2=\overline y''''-\overline y''+\overline y,\nonumber
\end{gather}
with 
$$
\overline x=x+X,\quad\overline y=y+Y.
$$ 
Note that $Z^2/2$ is
precisely the integrand for $E_0$, and recall that all close paths
involved are $1$-periodic, $\C^\infty$ and belong to $\bbO$, so
that we can freely differentiate in $t$ as many times as needed. In
particular, the two equations of the system of  critical closed paths
become
\begin{equation}\label{primera}
Z'Q+Z\Dv y'=-\e\alpha_1,\quad Z'P+Z\Dv x'=\e\alpha_2.
\end{equation}

We manipulate the two equations in \eqref{primera} in two ways:
\begin{enumerate}
\item  multiply the first equation by $Q$, the second by $P$, and add
up the results to find
\begin{equation}\label{aprimera}
Z'(P^2+Q^2)=-\e(\alpha_1 Q-\alpha_2 P)-ZW\Dv;
\end{equation}
\item then, multiply the first by $P$, the second by $Q$, and subtract
the results to have
$$
Z^2\Dv=-\e(\alpha_1 P+\alpha_2 Q).
$$
\end{enumerate}
We remind readers that we are
searching for periodic solutions of this system for which $E_\e$ is away from
zero. Indeed, recall that in seeking an upper bound for the right-hand side in \eqref{cotapreliminar}, $a$ is small, but kept fixed, when computing the inner limit
$$
\lim_{\e\to0}M_{cri, \e, a}.
$$
The second part of Theorem \ref{ultimoz} ensures that there cannot be branches of critical paths $\{\bu_\e\}$ with $E_0(\bu_\e)\to0$. 
We will therefore discard from our consideration those
critical paths $\bu_\e$ for which $E_0$ is arbitrarily small. In particular, we do
not need to consider asymptotic behaviors reducing to a point, and
so, bearing in mind that equilibria of our polynomial, differential
system are isolated and they could only be associated with critical
closed paths of the kind we are not interested in (those with small value for $E_0$), we can further multiply
\eqref{aprimera} by $Z$ and divide by $P^2+Q^2$, to have, taking into
account the other equation,
$$
(Z^2)'=2\e(\alpha_1 x'+\alpha_2 y').
$$
Hence, system \eqref{formafinal} can be written in the simplified,
equivalent form
$$
(Z^2)'=2\e(\alpha_1 x'+\alpha_2 y'),\quad Z^2\Dv=-\e(\alpha_1
P+\alpha_2 Q).
$$
To avoid confusion, we will rather write
\begin{equation}\label{sistema}
(Z_\e^2)'=2\e(\alpha_1 x'_\e+\alpha_2 y'_\e),\quad Z_\e^2\Dv_\e=-\e(\alpha_1
P_\e+\alpha_2 Q_\e).
\end{equation}
to stress the dependence on $\e$ of all quantities. Recall that
$$
Z_\e=\bF^\perp(\bu_\e)\cdot\bu'_\e,\quad \Dv_\e=\dv\bF(x_\e, y_\e),\quad \bu_\e=(x_\e, y_\e).
$$
The first point in Theorem \ref{ultimoz} informs us that in fact, along branches of critical paths $(x_\e, y_\e)$, we can neglect terms multiplied by $\epsilon$, and
have that
$$
(Z_\e^2)'\to0,\quad Z_\e^2\Dv_\e\to0
$$
in $L^2(0, 1)$. We record this fact in a formal statement.

\begin{lemma}\label{conacero}
Let $(x_\e, y_\e)$ be a branch of solutions of \eqref{sistema}.
For a suitable subsequence (not relabeled), 
$$
(Z_\e^2)'\to0,\quad Z_\e^2\Dv_\e\to0,
$$
in $L^2(0, 1)$ and pointwise for a.e. $t\in[0, 1]$.
\end{lemma}
Note how the $L^2$-convergence claimed in this statement forbids that $Z_\e^2$ could converge to a non-constant function. 
We are therefore entitled to understand all possible asymptotic behaviors of critical closed paths 
$$
(x_\e, y_\e)\in\bbO
$$ 
through an analysis of the limit system  
\begin{equation}\label{sistemalimite}
(Z^2)'=0,\quad Z^2\Dv=0,
\end{equation}
setting $\e=0$ in \eqref{sistema}.
The first equation in \eqref{sistemalimite} implies that $Z^2=k^2$ (note that here is crucial the $L^2$ -convergence in Lemma \ref{conacero}),
but since we are only interested in the asymptotic behavior for
critical closed paths whose value for $E_0$ stays away from zero, we
discard the case $k=0$. In this situation, the second equation in
\eqref{sistemalimite}, implies $\Dv=0$. We would like to understand
the nature of solutions of the limit system
\begin{equation}\label{zeta}
Z^2=k^2>0,\quad \Dv=0.
\end{equation}
We write this system in the form, differentiating the second
equation,
\begin{equation}\label{dsform}
\bF^\perp(\bu)\cdot\bu'=\pm k\neq0,\quad \nabla\Dv(\bu)\cdot\bu'=0.
\end{equation}
This is an implicit, first-order system that becomes singular when
the determinant 
$$
\nabla\Dv(\bu)\cdot\bF(\bu)
$$ 
of the matrix of the system
$$
\begin{pmatrix}\bF^\perp(\bu)\\ \nabla\Dv(\bu)\end{pmatrix}
$$
vanishes. These singular points are precisely the
contact points of our differential system over the curve $\Dv=0$. The
fact that
$$
\bF^\perp(\bu)\cdot\bu'=\pm k\neq0
$$ 
shows that $\bu_\e$, for $\e$ sufficiently small, can only turn
around, changing $+k$ by $-k$ or viceversa, near those contact
points.  As a matter of fact, critical closed paths $\bu_\e$ do have to
turn around whenever one such point is reached and is a single root of system \eqref{salto0}. To emphasize this point, we explicitly introduce the following.

\begin{definition}
A point $\bp\in\R^2$ is a single (or simple) contact point of our differential system, if
$$
\det\begin{pmatrix}\bF^\perp(\bu)\\ \nabla\Dv(\bu)\end{pmatrix}
$$
vanishes at $\bu=\bp$, but it changes sign in every neighborhood of $\bp$ in $\Dv=0$.
\end{definition}

We can now establish in a precise way the role played by single, contact points.
\begin{lemma}\label{conauno}
Let $\bu_\e=(x_\e, y_\e)$ be a branch of critical closed paths of $E_\e$ such that $E_0(\bu_\e)$ stays away from zero. 
Suppose that in a certain subinterval  
$$
[t^-_\e, t^+_\e]\subset[0, 1],\quad t_\e^+-t_\e^-\to0,
$$ 
we know that
$$
|\bu'_\e(t^\pm_\e)|\to+\infty, \quad \bu_\e(t^\pm_\e)\to\bp,
$$
as $\e\to0$, where $\bp$ is a single, contact point of the system. Then $\bu_\e$ must turn around at $\bp$ for $\e$ sufficiently small, in the sense
\begin{equation}\label{conclusion}
\frac{\bu'_\e(t^-_\e)}{|\bu'_\e(t^-_\e)|}+\frac{\bu'_\e(t^+_\e)}{|\bu'_\e(t^+_\e)|}\to\bc
\end{equation}
as $\e\to0$.
\end{lemma}

\begin{figure}[b]
\includegraphics[scale=0.5]{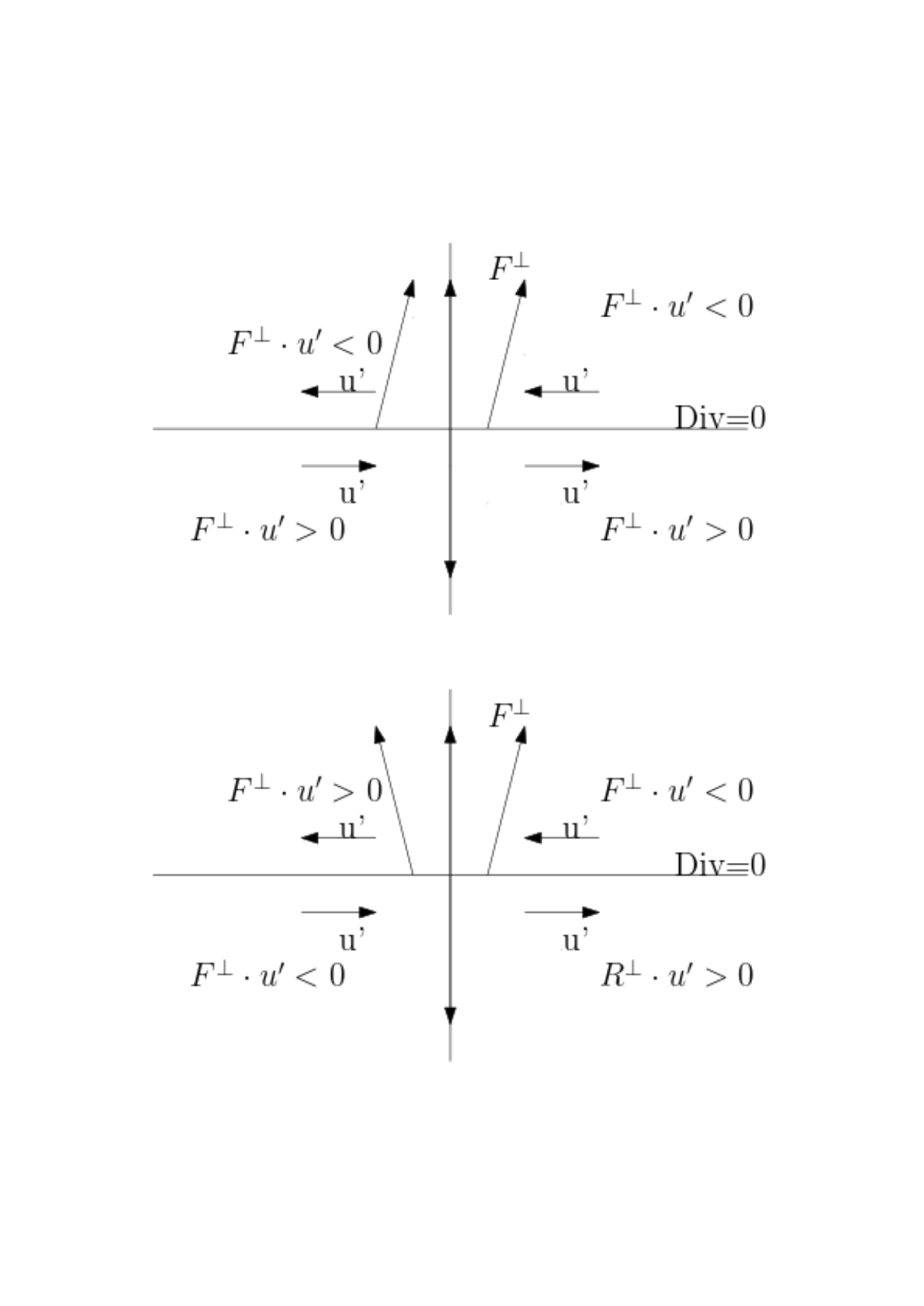}
\caption{Situation around a contact point: top, a multiple contact point; bottom, a single contact point.} \label{contact}
\end{figure}

\begin{proof}
Figure \ref{contact} can help in understanding the situation. 
The single, contact point $\bp$ is the intersection of both axes. Keep in mind that
$$
Z^2=(F^\perp(\bu)\cdot\bu')^2\to k^2,\quad Z\to\pm k,
$$
a constant uniformly away from zero, as $\bu\to\bp$. $Z^2$ must be continuous through $\bp$, but $Z$ might change sign abruptly. 
The possibility for $\bu$ to turn around at $\bp$, jumping from $k$ to $-k$,  is always there. The thing is that if $\bp$ is a single contact point, a simple solution of system \eqref{salto0}, then this is the only possibility. 
Indeed, the only way for $\bu$ to cross over a contact point $\bp$ is for the inner product
$$
\bF^{\perp}(\bu)\cdot\bu'(=Z)
$$
to retain its sign as to avoid a jump discontinuity. This is impossible at a single, contact point (bottom picture of Figure \ref{contact} corresponding to a simple contact point), while it is coherent with a double (or even-order) contact point (top picture in the same figure).  
\end{proof}

Our above discussion can be summarized in the following statement
that classifies all possible asymptotic behaviors for critical closed
paths.

\begin{theorem}\label{recuentoo}
Assume that all the components of the curve $\Dv=0$ are topologically
straight lines or ovals. The possible limit behaviors as $\e\to 0$ of branches of critical closed
paths of $E_\varepsilon$, with critical value uniformly away from zero, can be necessarily 
identified with arcs of the connected
components of the curve $\Dv=0$ in one of the following
possibilities:
\begin{itemize}
\item[(a)] If the component is homeomorphic to a straight line, then
\begin{itemize}
\item[(a.1)] the limit behavior is an arc whose endpoints are two contact
points and no additional contact point lies in its interior;
\item[(a.2)] the limit behavior is an arc whose endpoints are one contact
point and the infinity, and no additional contact point can be found in its
interior;
\item[(a.3)] the limit behavior is the whole component without contact points.
\end{itemize}
\item[(b)] If the component is homeomorphic to an oval, then
\begin{itemize}
\item[(b.1)] the limit behavior is an arc whose endpoints are two contact
points and no additional contact point lies in its interior;
\item[(b.2)] the limit behavior is an arc covering the full oval whose
endpoints have to be  a single contact point, and no additional
contact point is to be found in the oval;
\item[(b.3)] the limit behavior is the whole oval without contact points.
\end{itemize}
\end{itemize}
\end{theorem}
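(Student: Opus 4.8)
The plan is to read the classification directly off the limit system \eqref{zeta}, together with the two local descriptions supplied by Lemma \ref{conauno}. First I would record the global picture. For any branch of critical closed paths whose $E_0$-value stays uniformly away from zero, Lemma \ref{conacero} and the ensuing reduction to \eqref{zeta} force the limiting image to lie entirely on the divergence curve $\Dv=0$, with $Z=\pm k\neq0$ there. Since by hypothesis every connected component of $\Dv=0$ is homeomorphic either to a straight line or to an oval (so the curve has no singular points), and since the contact points are precisely the finitely many simple solutions of \eqref{salto0}, each component is partitioned by its contact points into finitely many open arcs.

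Next I would pin down how the limit path sits inside a single such arc. By part (1) of Lemma \ref{conauno}, on any subinterval staying away from contact points the path converges, together with all its derivatives, to an integral curve of the first-order system \eqref{dsform} along the component, and is in particular traversed monotonically there. By part (2) of Lemma \ref{conauno}, whenever the path approaches a simple contact point its velocity blows up while its unit tangent reverses, so the path turns around and cannot cross such a point. Combining the two, the limiting image is swept back and forth over a maximal arc bounded by turning events: the arc therefore carries \emph{no} contact point in its interior, and each of its two endpoints is either a simple contact point where the path turns around, or a point at infinity where it escapes along \eqref{dsform}.

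The classification is then exhaustive bookkeeping over the topological type of the component and the endpoints of the arc. On a line-type component: an arc bounded by two consecutive contact points with empty interior is case (a.1); an arc running from an extreme contact point out to infinity is (a.2); and a component carrying no contact point is a single arc, case (a.3). On an oval: two consecutive contact points bound an arc of type (b.1); a single contact point leaves exactly one arc, wrapping the whole oval and returning to that same point, which is (b.2); and a contact-point-free oval is swept in its entirety, case (b.3). Assigning to each branch the maximal arc that its image sweeps defines a map into these arcs, and distinct limiting images are carried by distinct arcs; this is the asserted one-to-one identification, and the total count of admissible arcs is visibly bounded by $M+N$.

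I expect the genuinely delicate point to be the rigorous confinement of a branch to a single arc, that is, upgrading the pointwise turning relation \eqref{conclusion} into the statement that for all small $\e$ the path truly cannot leak across a simple contact point. The crux is that crossing would require $Z=\F^\perp(\bu)\cdot\bu'$ to keep its sign, whereas the cubic blow-up profile extracted in the scaling analysis of Lemma \ref{conauno} forces exactly one reversal of the tangent direction at a simple contact; excluding the even-order (crossing) profile there, and making precise the escape-to-infinity endpoints so that the enumeration stays finite, are where the real care is required.
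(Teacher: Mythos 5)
Your proposal follows essentially the same route as the paper: the paper offers no separate formal proof of Theorem \ref{recuentoo}, presenting it instead as a summary of the discussion in Section \ref{ab}, namely the reduction via Lemma \ref{conacero} to the limit system \eqref{zeta}--\eqref{dsform}, the identification of contact points as the singular points of that system, and the turning-around behavior of Lemma \ref{conauno}, which is exactly the chain of reasoning you reconstruct before doing the case-by-case bookkeeping. You also correctly locate the genuinely delicate point (ruling out leakage across a simple contact point via the cubic blow-up profile), which is where the paper's own argument in Lemma \ref{conauno} carries the load.
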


\subsection{Multiplicity}\label{mult}

We are concerned in this subsection about the possibility that various
branches of the set of  critical closed paths, for $\epsilon$
positive, may coalesce into the same limit behavior as $\e\searrow0$,
and how they can possibly contribute to the 
number of critical paths. We will need to exploit very precisely the information that the number of any such group of critical paths share, at least locally, the same limit behavior as $\e\to0$. 

As a consequence of Definition \ref{mapasol}, and comments made after it, the set of possible singular pairs $(\bp, \bq)$ where maps 
$$
\bu(t; \bp, \bq, \e)
$$
are not well-defined because there might be more than one critical path with
$$
\bu(0; \bp, \bq, \e)=\bu(1; \bp, \bq, \e)=\bp,\quad 
\bu'(0; \bp, \bq, \e)=\bu'(1; \bp, \bq, \e)=\bq,
$$
is isolated, for each fixed $\e$, as a consequence of analyticity. This is in fact the only reason to stress the analytic dependence of this map. 
For a given (countable) sequence of values for $\e$, we can always find a certain point $\bp$ in the part of the curve $\Dv=0$, with tangent vector $\bq$, identifying the particular asymptotic limit we are focusing on, and away from any contact point of the system limiting the asymptotic behavior considered according to Theorem \ref{recuentoo}, in such a way that the family of maps
\begin{equation}\label{aplicacion}
\bu(t; \by, \bz, \e):[0, 1]\times{\mathbb B}_\rho(\bp)\times{\mathbb B}_\rho(\bq)\to\R^2
\end{equation}
is well-defined, smooth (even analytic) in all variables $(t, \by, \bz)$, for every  value of $\e$ from the selected sequence, and some positive radius $\rho$. The interesting point is that the domain of this family of maps can be taken independently of $\e$, at least for a sequence of values of $\e$ converging to zero. We will not make the distinction between a countable set of values of $\e$ and the full continuum since this distinction is irrelevant for our purposes. 
Recall that, in addition to verifying \eqref{formafinalll} in Proposition \ref{difff}, we also have
$$
\bu(0; \by, \bz, \e)=\bu(1; \by, \bz, \e)=\by,\quad 
\bu'(0; \by, \bz, \e)=\bu'(1; \by, \bz, \e)=\bz,
$$
for 
$$
(\by, \bz)\in{\mathbb B}_\rho(\bp)\times{\mathbb B}_\rho(\bq).
$$
We are going to proceed in two successive steps. 
\begin{enumerate}
\item Suppose $\by$ is kept fixed in a neighborhood of $\bp$, so that we would work in the space $H^2_{\by, }(0, 1]; \R^2)$. How many critical paths of the perturbed functional $E_\e$ could there be in this space, i.e. paths described in Proposition \ref{diff}? We will show that there cannot be more than $n-1$ such paths regardless of the value of $\by$. These $n-1$ paths will correspond to our mapping $\bu(t; \by, \bz, \e)$ for suitable velocity vectors 
$$
\bz=\bz_i(\by),\quad i=1, 2, \dots, n-1,
$$ 
that is to say
\begin{equation}\label{segundopaso}
\bu_i(t; \by, \e)\equiv\bu(t; \by, \bz_i(\by), \e),\quad i=1, 2, \dots, n-1,
\end{equation}
are the critical paths in Proposition \ref{diff} for each fixed $\by$. The maps $\bz_i(\by)$ are again smooth due to the standard regularity dependence of solutions of differential problems on initial conditions. 
\item For each given $i=1, 2, \dots, n-1$, how many values of $\by$ in a neighborhood of $\bp$ can there be such that $\bu_i(t; \by, \e)$ in \eqref{segundopaso} is one of the critical paths we are interested in counting, i. e. critical paths in Proposition \ref{dif}? Again we will show that, for each $i$, there are at most $n-1$ such paths. 
\end{enumerate}

Our strategy is led by two main points.
\begin{enumerate}
\item We would like to replace vector variables $\by$ and $\bz$ in \eqref{aplicacion} by single variables $r$ and $s$, respectively, since our arguments will depend on facts that are only valid for polynomials of a single real variable.
\item Use the full power of being a critical path to localize arguments through suitable test paths, and reduce the counting procedure to a question about one-variable polynomials. 
\end{enumerate}

Since paths $\bu(t; \by, \bz, \e)$ will intersect the straight line 
$$
\{\bp+r\bq^\perp: r\in\R\},
$$ 
at least for small $\e$ and small $r$, due to the convergence to one of our asymptotic limits, by resetting the initial time $t=0$ and reparametrizing the path conveniently around $t=0$, we can replace $\by$ by $\bp+r\bq^\perp$ in the above discussion in such a way that we have a single variable $r$ to label the starting point of our critical paths. To make things easier, we will replace the unit interval $[0, 1]$ by $[-1/2, 1/2]$ in this section. In the same vein, in a neighborhood of $\bq$, the velocity vector $\bz$ can be considered of the form $\bq+s\bq^\perp$ for $|s|$ sufficiently small, possibly at the expense of changing slightly, through a linear reparametrization to transform appropriately the length of $\bz$ and without changing the feature of being critical for $E_\e$, the unit interval $[-1/2, 1/2]$ by $[-S_\e, S_\e]$ with $S_\e\to1/2$ as $\e\to0$. This is not a problem as we will subsequently work in subintervals $J_\e$ of small length around $0$. In this way, we can assume, without loss of generality in our above discussion that
\begin{gather}
\by=\bp+r\bq^\perp,\quad \bz=\bq+s\bq^\perp,\nonumber\\
\bu(t; r, s, \e)\equiv\bu(t; \bp+r\bq^\perp, \bq+s\bq^\perp, \e),\nonumber
\end{gather}
and so the two steps in our counting method will depend in both cases on single variables either $r$ or $s$. More specifically, we can review the preceding two main successive stages in the following explicit manner.
\begin{enumerate}
\item Assume $r$ is given of size sufficiently small. How many values of $s$ could there possibly be so that $\bu(t; r, s, \e)$ represent critical paths in Proposition \ref{diff}? Argue that there cannot be more than $n-1$, corresponding to the $n-1$ smooth branches $s_i(r)$. 
\item For each such branch $s_i(r)$, show that there cannot possibly be more than $n-1$ values of $r$ such that 
\begin{equation}\label{secondstep}
\bu_i(t; r, \e)=\bu(t; r, s_i(r), \e),\quad i=1, 2, \dots, n-1,
\end{equation}
are critical paths in Proposition \ref{dif}, i.e. the paths we aim at counting. 
\end{enumerate}
If we can cover these two steps successfully, we will have our main result in this section. It may be interesting to realize that if some of the roots for $r$ or $s$ that we pretend to count correspond to the same critical path (because of the fact that paths belong to some $\bbO_d$ with $d>1$), the bound we are claiming would still be valid for in that case there would be less distinct critical paths. 

\begin{theorem}\label{multiplicidad}
There cannot be more than $(n-1)^2$ branches of critical closed paths in Proposition \ref{dif} converging to any of the possible asymptotic behaviors given in  Theorem \ref{recuentoo}.
\end{theorem}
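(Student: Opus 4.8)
The plan is to follow the two-step reduction already set up above, turning each step into the count of isolated zeros of an analytic map and then bounding that count by a polynomial degree equal to $n-1$. Throughout I work with the selection $\bu(t;\by,\bz,\e)$ of \eqref{datosini}, which solves \eqref{formafinalll} in Proposition \ref{difff} and depends smoothly on $(\by,\bz)$ near $(\bp,\bq)$ (analytically in the parameters relevant to the count, cf. Lemma \ref{segundoo}). Recall from the discussion around Theorems \ref{basicsecond}--\ref{basicthird} that promoting such a path to a solution of \eqref{formafinall} (Proposition \ref{diff}) costs exactly the vector jump condition \eqref{saltooo}, $[F_\bZ]_{t=0}=\bc$, i.e. continuity of $\bu''$ across the period; and promoting it further to a genuine critical closed path of \eqref{formafinal} (Proposition \ref{dif}) costs in addition the second vector jump $[\frac d{dt}F_\bZ-F_\bz]_{t=0}=\bc$. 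These are four scalar equations matching the four free scalars in $(\by,\bz)$, so isolated solutions are expected; the whole point is to bound their number.

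First I would fix $\by$ near $\bp$ and study the map $\bz\mapsto G_\e(\by,\bz):=[F_\bZ]_{t=0}$, the jump of $F_\bZ=\e\bu''+\bv''_\e$ across the period, evaluated on $\bu(t;\by,\bz,\e)$. Its zeros are exactly the $\bz$ for which the selection lies in Proposition \ref{diff}; this is the content of the announced Lemma \ref{segundoo}. The key is that $\bz$ ranges transverse to $\bq$ along a one-parameter family, the component of $\bz$ along $\bq$ being essentially pinned by the normalization $Z=\F^\perp(\bu)\cdot\bu'=\pm k$ that survives in the limit \eqref{dsform}; so after eliminating this constrained direction the condition $G_\e=\bc$ reduces to a single analytic equation in one transverse parameter $\tau$. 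Passing to the limit $\e\searrow0$ with the inner rescaling of Lemma \ref{conauno} — where the rescaled components become cubic polynomials and the turning at the contact point is governed by the polynomial data — the equation becomes a genuine polynomial identity whose nonlinearity is carried by the degree-$(n-1)$ object $D\F^\perp(\bu)$ (equivalently by $\Dv$, of degree $n-1$). I would show this limiting polynomial has degree $n-1$ in $\tau$, hence at most $n-1$ roots, and then invoke a Hurwitz/persistence argument — the uniform estimates of Lemma \ref{conauno} keep roots inside the fixed neighborhood and away from the excluded singularity $\nabla\Dv\cdot\F=0$ — to conclude that for all small $\e>0$ there are at most $m_1=n-1$ solutions $\bz_j(\by)$, analytic in $(\by,\e)$.

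Next, for each branch $\bz_j(\by)$ I would impose the remaining jump $[\frac d{dt}F_\bZ-F_\bz]_{t=0}=\bc$, defining $H_\e(\by)$ to be this jump evaluated along $\bu(t;\by,\bz_j(\by),\e)$, whose zeros are precisely the $\by$ promoting the path to Proposition \ref{dif}. Exactly as before, $\by$ is effectively one-dimensional once it is constrained to the limiting curve $\Dv=0$, the transverse-to-$\Dv$ component being pinned by $Z$ staying uniformly away from zero; so $H_\e=\bc$ collapses to one analytic equation in a single transverse parameter $\sigma$, whose $\e\to0$ limit is again polynomial of degree $n-1$ for the same structural reason, giving at most $m_2=n-1$ admissible values of $\by$ for each $j$. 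Combining the two counts, at most $m_1m_2=(n-1)^2$ pairs $(\by,\bz_j(\by))$ produce solutions of \eqref{formafinal}, and by the one-to-one correspondence between these pairs and branches established at the end of Section \ref{aqui} we obtain at most $(n-1)^2$ branches converging to the fixed asymptotic behavior of Theorem \ref{recuentoo}, as claimed.

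The main obstacle is the middle step: rigorously collapsing each transcendental jump condition $G_\e=\bc$ and $H_\e=\bc$ — which a priori depend on the entire $\e$-dependent solution and on boundary-layer behavior near the contact point — to a single-variable polynomial equation of degree exactly $n-1$, uniformly as $\e\searrow0$. This demands (i) the inner/outer matching of Lemma \ref{conauno} to identify the limiting equation and to justify that the constrained directions of $\bz$ and $\by$ may be eliminated cleanly; (ii) a careful degree computation showing that the only surviving nonlinearity is the degree-$(n-1)$ term built from $D\F^\perp$; and (iii) a persistence argument guaranteeing that no roots are gained, lost, or absorbed into the singular set $\{\nabla\Dv\cdot\F=0\}$ as $\e\to0$, so that the limiting bound $n-1$ controls the count for every sufficiently small positive $\e$.
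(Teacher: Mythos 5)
Your outer architecture coincides with the paper's: two nested one-parameter counts, each bounded by $n-1$, multiplied to give $(n-1)^2$, and your identification of the two promotions (Proposition \ref{difff} to Proposition \ref{diff} via $[F_\bZ]_{t=0}=\bc$, then to Proposition \ref{dif} via the second jump) is a faithful reading of Theorems \ref{basicsecond}--\ref{basicthird}. The genuine gap is in the middle step, which you yourself flag as ``the main obstacle'': you assert, but supply no mechanism for, the claim that each jump condition collapses as $\e\searrow0$ to a single-variable polynomial of degree $n-1$. The inner rescaling of Lemma \ref{conauno} produces cubics in the \emph{time} variable; it says nothing about degree-$(n-1)$ polynomial dependence on the initial-data parameter $\tau$, and the jump $[\e\bu''+\bv''_\e]_{t=0}$ has no visible reason to be polynomial in $\tau$ at all. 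The paper obtains the degree $n-1$ by a different and quite specific device: it pairs the full first variation $\langle E'_\e(\bu_\e(s)),\bW_\e\rangle$ (whose zeros in $s$ are exactly the branches to be counted) against a test field $\bW_\e$ supported in a shrinking interval $J_\e$ around $t=0$, integrates by parts using the algebraic identity
$$
\bigl[D\F^\perp(\bu)^T-D\F^\perp(\bu)\bigr]\bu'=-\Dv(\bu)\,(\bu')^\perp,
$$
which is what injects the degree-$(n-1)$ polynomial $\Dv$ into the computation; it writes $\bu_\e(s)=\bU_\e+s\bV_\e+\bR_\e(s)$ with a remainder vanishing uniformly together with its derivatives (Claim \ref{timeder}); and it chooses $\bW_\e=s_\e^{-(n-1)}\chi_\e\bV'_\e$ precisely so that $(\bu'_\e(s))^\perp\cdot\bW_\e$ loses its $s$-dependence and the only surviving dependence on $s$ sits inside $\Dv(\bU_\e+s\bV_\e)$ --- manifestly a polynomial of degree at most $n-1$ in $s$ because $\Dv$ has degree $n-1$ and its argument is affine in $s$. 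Without analogues of these three ingredients (the integration-by-parts identity producing $\Dv$, the affine-plus-small-remainder structure, and the test-field choice killing the extraneous $s$-dependence), your items (i)--(iii) cannot be carried out and the bound $n-1$ per step has no source.

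A secondary issue: your dimension reductions are justified only asymptotically (the component of $\bz$ ``pinned by $Z=\pm k$'', $\by$ ``constrained to $\Dv=0$''), whereas for positive $\e$ neither constraint holds exactly, so the passage from two scalar equations in two unknowns to one equation in one unknown is not clean. The paper reduces to one parameter from the outset by a different route: time-translation of the periodic path places $\bu(0)$ on the normal line $\bp+s\bq^\perp$, and the $\bz$-count is run along a single fixed direction $\overline\bz$. If you retain the two-dimensional formulation you must additionally exclude one-parameter families of zeros of your jump maps, which the paper's scalar parametrization avoids.
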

\begin{proof}
As just indicated, we will proceed in two main successive steps: first, for fixed $r$, try to understand critical paths $\bu(t; r, s, \e)$ in Proposition \ref{diff} depending on $s$; secondly, for each branch $i$ found in the previous step, make an attempt to examine critical paths $\bu_i(t; r, \e)$, given in \eqref{secondstep}, in Proposition \ref{dif}. In both cases, critical paths make the derivative $E'_\e$ vanish, through test paths 
$$
\bW_\e\in\espacioo,
$$ 
that will be appropriately chosen below in each situation. More explicitly
\begin{equation}\label{derada}
\langle E'_\e(\bu(r, s, \e)), \bW_\e\rangle=
\langle E'_0(\bu(r, s, \e)), \bW_\e\rangle+\epsilon\langle\bu(r, s, \e),\bW_\e\rangle+\langle\bv_0, \bW_\e\rangle
\end{equation}
should vanish on critical paths. It is important to realize that such test paths $\bW_\e$ can depend on $\e$, since this will permit a flexibility that is very convenient to our purposes. 
As already remarked before, our strategy is to localize the vanishing of \eqref{derada} through a judicious choice for $\bW_\e$ so as to reduce the counting procedure essentially to an issue of roots of polynomials of a single variable. Let us once again insist in that \eqref{derada} should vanish at critical paths $\bu(r, s, \e)$ for every legitimate choice of test path $\bW_\e$. 

The fundamental difference between those test paths $\bW_\e$ in Propositions \ref{dif} and \ref{diff} is intimately related to the fact that in the first case $\bW_\e$ must vanish for $t=0$, while for the second they don't have to.
Recall that 
$$
\bu(t; r, s, \e)\equiv\bu(t; \bp+r\bq^\perp, \bq+s\bq^\perp, \e),\quad \bu(r, s, \e)=\bu(t; r, s, \e), 
$$
and that the test path $\bW_\e$ can also depend eventually on $\e$, as already emphasized. Before we reach the point where the difference for the test path $\bW_\e$ must be taken into account, let us examine the explicit form of the derivative in \eqref{derada}. 

If we recall formula \eqref{prider}, we can write explicitly the term
$$
\langle E'_0(\bu(r, s, \e)), \bW_\e\rangle
$$
in the form
\begin{gather}
\int_0^1(\bF^\perp(\bu(r, s, \e))\cdot\bu'(r, s, \e))
(D\bF^\perp(\bu(r, s, \e))\bW_\e)\cdot \bu'(r, s, \e)\,dt\label{estrella}\\
+\int_0^1(\bF^\perp(\bu(r, s, \e))\cdot\bu'(r, s, \e))
\bF^\perp(\bu(r, s, \e))\cdot\bW'_\e]\,dt,\nonumber
\end{gather}
where primes indicate differentiation with respect to time $t$. We will eventually take
\begin{equation}\label{eleccion}
\bW_\e(t)=\chi_\e(t)\tilde \bW_\e(t)
\end{equation}
with $\{\chi_\e\}$, a sequence of suitable mollifiers (recall Subsection \ref{multiplicidadd}), and $\tilde \bW_\e(t)$, chosen later appropriately.  
Since the support $J_\e$ of the mollifier $\chi_\e$ can be selected in such a way that the factor 
$$
\bF^\perp(\bu(r, s, \e))\cdot\bu'(r, s, \e),
$$
which is the integrand for $E_0$, tends to be a constant $k_\e$ uniformly away from zero, according to our discussion in  Section \ref{ab}, and, after all, we are interested in values of pairs $(r, s)$ vanishing the derivative in \eqref{derada}, we realize that we can replace the previous form of the derivative (whose first term is written in \eqref{estrella}) by the expression
\begin{gather}
k_\e\int_{J_\e}[(D\bF^\perp(\bu(r, s, \e))\bW_\e)\cdot \bu'(r, s, \e)+\bF^\perp(\bu(r, s, \e))\cdot\bW'_\e]\,dt\nonumber\\
+\epsilon\langle\bu(r, s, \e),\bW_\e\rangle+\langle\bv_0, \bW_\e\rangle.\nonumber
\end{gather}
Let us focus our attention on this integral
$$
I_\e=\int_{J_\e}[(D\bF^\perp(\bu(r, s, \e))\bW_\e)\cdot \bu'(r, s, \e)+\bF^\perp(\bu(r, s, \e))\cdot\bW'_\e]\,dt.
$$
We can perform an integration by parts in the second term (note that contributions at end-points vanish) to find
$$
I_\e=\int_{J_\e}
[(D\bF^\perp(\bu(r, s, \e))\bW_\e)\cdot \bu'(r, s, \e)-
(D\bF^\perp(\bu(r, s, \e))\bu'(r, s, \e))\cdot \bW_\e]\,dt
$$
or, by recalling \eqref{eleccion}, 
\begin{align}
I_\e=&\int_{J_\e} \chi_\e(t)[D\bF^\perp(\bu(r, s, \e))^T-D\bF^\perp(\bu(r, s, \e))]\bu'(r, s, \e)\cdot \tilde\bW_\e\,dt\nonumber\\
=&-\int_{J_\e}\chi_\e(t)\Dv(\bu(r, s, \e)) \bu'(r, s, \e)^\perp\cdot \tilde\bW_\e\,dt,\nonumber
\end{align}
where
$$
\bu'(r, s, \e)^\perp=\bQ\bu'(r, s, \e),\quad \bQ=\begin{pmatrix}0&-1\\1&0\end{pmatrix}.
$$
The family of functions $f_\e(r, s)$ of two variables $(r, s)$ we need to deal with is then, after an irrelevant change of sign, 
\begin{align}
f_\e(r, s)\equiv &\int_{J_\e}\chi_\e(t)\Dv(\bu(r, s, \e)) \bu'(r, s, \e)^\perp\cdot \tilde\bW_\e\,dt\label{funciones}\\
&-\frac{\epsilon}{k_\e}\langle\bu(r, s, \e),\bW_\e\rangle-\frac1{k_\e}\langle\bv_0, \bW_\e\rangle.\nonumber
\end{align}

We now focus on the first step of our program. Fix $r$, small. For a family of critical paths $\bu(r, s, \e)$, the family of functions in \eqref{funciones} should vanish according to our above discussion, and so a bound for the number of such roots in the variable $s$ will furnish an upper bound for the number of such critical paths $\bu(r, s, \e)$. 
Recall that test path $\bW_\e$ is the product in \eqref{eleccion}.
It suffices to select the support of the mollifier 
\begin{equation}\label{estrellaa}
J_\e=[0, 2\alpha(\e)]
\end{equation}
in such a way that the product in \eqref{eleccion} is indeed a feasible variation in Proposition \ref{diff}. In particular, as recalled above, we ought to have
$$
\bW_\e(0)=\chi_\e(0)\tilde\bW_\e(0)=\bcero.
$$
This is guaranteed by the previous choice for the support of $\chi_\e$ in \eqref{estrellaa}. 
Other than that, $\tilde\bW_\e(t)$ can be chosen in an arbitrary way to our advantage, as well as $\alpha(\e)>0$. 

On the other hand, consider the family of polynomials of a single variable $s$, for each fixed $r$, given by
\begin{equation}\label{polinomioss}
P_\e(s)=\Dv[\bp+r\bq^\perp+\alpha(\e)(\bq+s\bq^\perp)].
\end{equation}
Let $N\le n-1$ be the degree of such polynomials in the variable $s$ (recall that $r$ is being kept frozen here), and take 
 \begin{equation}\label{estrellaaa}
\tilde\bW_\e(t)\equiv\frac1{\alpha(\e)^N}\bq^\perp,\quad \bW_\e=\chi_\e(t)\tilde\bW_\e.
\end{equation}
Keep in mind that $\Dv(x, y)$ is a polynomial of at most degree $n-1$ in two variables. 
Assume that, for fixed $r$, we could find at least $N+1$ branches of values for $s$ that are roots of $f_\e$ in \eqref{funciones}, including possibly multiplicities, i.e. 
$$
f_\e(r, s_j(\e, r))=0,\quad s=s_j(\e, r), j=1, 2, \dots, N, N+1.
$$
Because each $f_\e$ is smooth, there would be a certain value $s(\e)$, $s(\e)\to0$, such that
$$
\frac{\partial^N f_\e}{\partial s^N}(r, s(\e))=0\hbox{ for all }\e.
$$
We have dropped the dependence of $s$ on $r$ here for the sake of notational simplicity. 
If we go back to our formula for $f_\e(r, s)$, and take into account that all functions involved are smooth and variables move in intervals where everything is bounded, we can take differentiation under the integral sign, and find that
\begin{align}
0=&\int_{J_\e}\chi_\e(t)\left.\frac{\partial^N }{\partial s^N}\right|_{s=s(\e)}[\Dv(\bu(r, s, \e)) \bu'(r, s, \e)^\perp\cdot \tilde\bW_\e]\,dt\nonumber\\
&-\frac\e{k_\e}\left.\frac{\partial^N }{\partial s^N}\right|_{s=s(\e)}\langle\bu(r, s, \e),\bW_\e\rangle.\nonumber
\end{align}
For the second term, we can write, after several integrations by parts, 
\begin{align}
\langle\bu(r, s, \e),\bW_\e\rangle=&\int_{J_\e}\sum_{i=0}^2\bu^{i)}(t; r, s, \e)\cdot\bW_\e^{i)}(t)\,dt\nonumber\\
=&\int_{J_\e}\chi_\e(t)\sum_{i=0}^2(-1)^i\bu^{2i)}(t; r, s, \e)\cdot\tilde\bW_\e(t)\,dt.\nonumber
\end{align}
By the second part of Proposition \ref{aclarar}, if $\alpha(\e)$ is chosen sufficiently small, we can conclude that
\begin{gather}
\left.\frac{\partial^N }{\partial s^N}\right|_{s=s(\e)}[\Dv(\bu(\alpha(\e); r, s, \e)) \bu'(\alpha(\e); r, s, \e)^\perp\cdot \tilde\bW_\e]\label{importantisimo}\\
-\frac\e{k_\e}\left.\frac{\partial^N }{\partial s^N}\right|_{s=s(\e)}\sum_{i=0}^2(-1)^i\bu^{2i)}(\alpha(\e); r, s, \e)\cdot\tilde\bW_\e\to0.\nonumber
\end{gather}
We would like to argue that this vanishing limit contradicts the choice of the degree for polynomials $P_\e(s)$ made in \eqref{polinomioss}. 

Again, because of smoothness,
\begin{gather}
\bu(\alpha(\e); r, s, \e))=\bu(0; r, s, \e)+\alpha(\e)\bu'(0; r, s, \e)+\alpha(\e)^2 \bR(t; r, s, \e),\nonumber\\
\bu'(\alpha(\e); r, s, \e))=\bu'(0; r, s, \e)+\alpha(\e)\bu''(0; r, s, \e)+\alpha(\e)^2 \bR'(t; r, s, \e)\nonumber
\end{gather}
with bounded remainders 
$$
\bR(t; r, s, \e),\quad \bR'(t; r, s, \e)
$$ 
in the domain where variables move. Even more explicitly
\begin{gather}
\bu(\alpha(\e); r, s, \e))=\bp+r\bq^\perp+\alpha(\e)(\bq+s\bq^\perp)+\alpha(\e)^2 \bR(t; r, s, \e),\nonumber\\
\bu'(\alpha(\e); r, s, \e))=(\bq+s\bq^\perp)+\alpha(\e)\bu''(0; r, s, \e)+\alpha(\e)^2 \bR'(t; r, s, \e).\nonumber
\end{gather}
If we take these expressions into the partial derivatives in \eqref{importantisimo}, and recall \eqref{estrellaaa}, 
we see that the first term becomes
\begin{gather}
\left.\frac{\partial^N }{\partial s^N}\right|_{s=s(\e)}\frac1{\alpha(\e)^N}[\Dv(\bp+r\bq^\perp+\alpha(\e)(\bq+s\bq^\perp)+\alpha(\e)^2 \bR(t; r, s, \e))]\nonumber\\
[(\bq+s\bq^\perp)^\perp+\alpha(\e)\bu''(0; r, s, \e)^\perp+\alpha(\e)^2 \bR'(t; r, s, \e)^\perp]\cdot \bq^\perp\nonumber
\end{gather}
or
\begin{gather}
\left.\frac{\partial^N }{\partial s^N}\right|_{s=s(\e)}\frac1{\alpha(\e)^N}[\Dv(\bp+r\bq^\perp+\alpha(\e)(\bq+s\bq^\perp)+\alpha(\e)^2 \bR(t; r, s, \e))]\nonumber\\
[|\bq|^2+(\alpha(\e)\bu''(0; r, s, \e)^\perp+\alpha(\e)^2 \bR'(t; r, s, \e)^\perp)\cdot \bq^\perp].\nonumber
\end{gather}
Because of the way the polynomials $P_\e$ and their degree $N$ were selected in \eqref{polinomioss}, on the one hand; and the freedom we have to choose $\alpha(\e)$ converging to zero as rapidly as necessary, on the other, we see that the previous derivative will not vanish as $\e\to0$ since terms affected by a negative power of $\alpha(\e)$ will vanish when differentiation is performed, while the coefficient corresponding to power $s^N$ does not vanish, by our manner of choosing the degree $N$ in \eqref{polinomioss}, and is independent of $\alpha(\e)$ (and of $\e$). The terms affected by the remainders $\bR$ will vanish too because of the presence of the higher power $\alpha(\e)^2$.
The second term in \eqref{importantisimo} does not spoil our argument because of the presence of $\e$ in front of it. This is easily checked. Note how the reason for the choice in \eqref{estrellaaa} is to have that the additional variable $s$ in $(\bq+s\bq^\perp)^\perp\cdot\bq^\perp$ drops out. 

This contradiction between \eqref{importantisimo} and polynomials in \eqref{polinomioss} enables us to conclude that there cannot be more than $n-1$ branches of solutions
$$
f_\e(r, s_j(\e, r))=0,\quad j=1, 2, \dots, n-1.
$$
This is the end of the first step. 

For the second step, fix one of those $n-1$ branches
$$
(r, s_j(r, \e)),\quad 1\le j\le n-1,
$$
and put
\begin{equation}\label{caminosdef}
\bu_j(t; r, \e)\equiv\bu(t; r, s_j(r, \e), \e),\quad \bu_j(r, \e)=\bu_j(t; r, \e),
\end{equation}
for $t\in[-1/2, 1/2]$, and small $r$ and $\e$. We go back to the family of functions in \eqref{funciones} setting $s=s_j(r, \e)$, to define the smooth functions
\begin{equation}\label{casoj}
f_{j, \e}(r)=\int_{J_\e}\chi_\e(t)\Dv(\bu_j(r, \e)) \bu'_j(r, \e)^\perp\cdot \tilde\bW_\e\,dt-\frac{\epsilon}{k_\e}\langle\bu_j(r, \e),\bW_\e\rangle-\frac1{k_\e}\langle\bv_0, \bW_\e\rangle.
\end{equation}
This time however, since we are looking for critical paths in Proposition \ref{dif}, the value of test paths 
$$
\bW_\e=\chi_\e\tilde\bW_\e
$$ 
need not vanish at $t=0$, and hence mollifier $\chi_\e$ can be taken to be even and with support $[-\alpha(\e), \alpha(\e)]$ with $\alpha(\e)$, as in the first step, at our disposal. Our goal is, again, to argue that for each fixed $j$, the family of functions $f_{j, \e}(r)$ in \eqref{casoj} cannot have more than $n-1$ branches of roots as $\e\to0$. If we succeed in showing this, our claim in the theorem will be proved. 

Our job, after the first step, is now much easier. We follow exactly along the same lines. Consider the polynomial
$$
P(r)=\Dv(\bp+r\bq^\perp)
$$
whose degree $N$ is at most $n-1$. Take, almost as above,
 $$
\tilde\bW_\e(t)\equiv\bq^\perp,\quad \bW_\e=\chi_\e(t)\tilde\bW_\e.
$$
Suppose that there could be $N+1$ roots $r_{i, j}(\e)$, converging to zero as $\e\to0$, of $f_{j, \e}$, i.e.
$$
f_{j, \e}(r_{i, j}(\e))=0,\quad i=1, 2, \dots, N, N+1.
$$
Recall that $j$ is fixed. Due to smoothness, there should be, at least, one root $r_j(\e)$ of the $N$-th derivative
$$
f^{N)}_{j, \e}(r_j(\e))=0,\quad r_j(\e)\to0\hbox{ as }\e\to0.
$$
Through \eqref{casoj}, we can write
\begin{gather}
\int_{J_\e}\chi_\e(t)\left.\frac{d^N}{dr^N}\right|_{r=r_j(\e)}\Dv(\bu_j(r, \e)) \bu'_j(r, \e)^\perp\cdot \tilde\bW_\e\,dt\nonumber\\
-\frac{\epsilon}{k_\e}\left.\frac{d^N}{dr^N}\right|_{r=r_j(\e)}\langle\bu_j(r, \e),\bW_\e\rangle=0.\nonumber
\end{gather}
We focus on the important first term, since the second is tackled as in the first step through the first part of Proposition \ref{aclarar}, to conclude that its limit, as $\e\to0$, vanishes as well. In this case, by selecting $\alpha(\e)$ appropriately, we conclude that
$$
\left.\frac{d^N}{dr^N}\right|_{r=r_j(\e)}\Dv(\bu_j(0; r, \e)) \bu'_j(0; r, \e)^\perp\cdot \bq^\perp\to0.
$$
But, taking into account \eqref{caminosdef}, 
$$
\bu_j(0; r, \e))=\bp+r\bq^\perp,\quad \bu'_j(0; r, \e))=\bq+s_j(r, \e)\bq^\perp,
$$
and the previous derivative becomes
$$
\left.\frac{d^N}{dr^N}\right|_{r=r_j(\e)}\Dv(\bp+r\bq^\perp) (\bq+s_j(r, \e)\bq^\perp)^\perp\cdot \bq^\perp=|\bq|^2\left.\frac{d^N}{dr^N}\right|_{r=r_j(\e)}P(r).
$$
The condition
$$
\left.\frac{d^N}{dr^N}\right|_{r=r_j(\e)}P(r)\to0
$$
as $\e\to0$ is impossible for a true polynomial of degree $N$. This is the contradiction, and the end of our proof.
\end{proof}

\subsection{Another form to deal with asymptotic behaviors of critical paths}
The localized way in which we have dealt with multiplicity in the previous section, permits to argue about our upper bound without Lemma \ref{conauno}. 

By Lemma \ref{conacero}, we have that for sequences of critical paths 
$$
\bu_\e=(x_\e, y_\e),
$$
we know that 
$$
(Z_\e^2)'\to0,\quad Z_\e^2\Dv_\e\to0.
$$
Since we are interested in those families of critical paths for which $Z_\e^2$ stays uniformly away from zero, we can conclude that 
$$
\Dv_\e\to0,\quad Z_\e^2-k_\e^2\to0,
$$
for a family of constants $k_\e$, uniformly away from zero. These two convergences suffice to go back to the beginning of Section \ref{mult}, and start with the discussion there exactly in the same terms with limit point, and corresponding tangent vector, 
$$
\bp\in\{\Dv=0\},\quad \bq,\hbox{ tangent to }\{\Dv=0\} \hbox{ at }\bp,
$$
respectively, to conclude that there cannot be more than $(n-1)^2$ critical paths 
$$
\bu_{\e, i, j},\quad i, j\in\{1, 2, \dots, n-1\},
$$
with 
$$
\bu_{\e, i, j}(0)\to\bp,\quad \bu'_{\e, i, j}(0)\to\bq,
$$
regardless of the limit of their full images. If some of these images go over contact points, that would mean less critical paths. The maximum possible number is the one counted through Theorem \ref{recuentoo}, and this would lead to the same upper bound. 

\subsection{Counting method}\label{contar}
Once we have proved all of the main preceding steps, it is easy to describe how to organize the counting procedure for an upper bound of the number $M_{cri}$ in Problem \ref{originalbisbis}. 
We will express our bound in terms of the following parameters, in addition to the degree $n$ of the system:
\begin{itemize}
\item $M$: the number of connected components of the curve $\Dv=0$;
\item $N$: the number of contact points of the differential system.
\end{itemize}

\begin{theorem}\label{q}
Under the assumptions and notation of Theorem \ref{principall} and Problem \ref{originalbisbis}, 
$$
M_{cri}\le (n-1)^2(M+N),
$$
and so 
our differential system cannot have more than 
$$
1+(n-1)^2(M+N)
$$ 
limit cycles.
\end{theorem}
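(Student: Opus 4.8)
The plan is to assemble the four ingredients already established—the additivity and valley forms of the Morse inequalities from Section \ref{morseinn}, the identification of limit cycles with components of $\{E_\e\le a_\e\}$ in Theorem \ref{valles}, the classification of asymptotic behaviors in Theorem \ref{recuentoo}, and the multiplicity bound in Theorem \ref{multiplicidad}—into a single chain of inequalities. First I would fix $\e$ small and $a_\e<b_\e$ satisfying conditions (i)--(iv), and record two exact evaluations of the alternating sum $\Sigma$. Since each connected component of $\{E_\e\le a_\e\}$ is a valley, the valley form of Morse inequalities gives $\Sigma=1$ on each component, so by additivity of $\Sigma$ over disjoint components $\Sigma(\{E_\e\le a_\e\})$ equals the number $\sharp\{E_\e\le a_\e\}$ of components. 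Since $\{E_\e\le b_\e\}$ is a single valley by (iv) (using coercivity for $b_\e$ large), $\Sigma(\{E_\e\le b_\e\})=1$. Additivity \eqref{aditividad} then yields
$$
\sharp\{E_\e\le a_\e\}=\Sigma(\{E_\e\le a_\e\})=1-\Sigma(\{a_\e<E_\e\le b_\e\}).
$$
By Theorem \ref{valles}, distinct limit cycles lie in distinct components of $\{E_\e\le a_\e\}$, so $H(n)\le\sharp\{E_\e\le a_\e\}$, and it remains only to bound $-\Sigma(\{a_\e<E_\e\le b_\e\})$.

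Next I would control the alternating sum by the total count of critical paths in the annular region, using $-\Sigma(\{a_\e<E_\e\le b_\e\})\le|\Sigma|\le\sum_k M_k=\C(\{a_\e<E_\e\le b_\e\})$. Every critical path there has $E_0$ uniformly away from zero by the choice of $a_\e$ together with (ii)--(iii); hence it does not arise from a zero of $E_0$ and is therefore subject to the asymptotic classification. Theorem \ref{recuentoo} bounds the number of possible asymptotic limits by $M+N$: each limit is an arc of a component of $\Dv=0$, and a short combinatorial check shows that a component carrying $c$ contact points contributes at most $1+c$ arcs (a line with $c\ge1$ contact points has $c-1$ finite and $2$ semi-infinite arcs; an oval with $c\ge2$ has $c$ arcs; while $c\in\{0,1\}$ yield a single arc), so summing over the $M$ components gives at most $M+N$. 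Theorem \ref{multiplicidad} then bounds by $(n-1)^2$ the number of branches sharing any one such limit, and, since each branch contributes exactly one critical path at a fixed small $\e$,
$$
\C(\{a_\e<E_\e\le b_\e\})\le (n-1)^2(M+N).
$$
Combining gives $-\Sigma(\{a_\e<E_\e\le b_\e\})\le(n-1)^2(M+N)$, the first assertion; feeding it back into the displayed identity yields $H(n)\le\sharp\{E_\e\le a_\e\}\le 1+(n-1)^2(M+N)$, the second.

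The main obstacle is not any single estimate but keeping the bookkeeping honest \emph{uniformly} in $\e$. One must be sure that every critical path of $E_\e$ with value in $(a_\e,b_\e]$ is captured by some branch of the asymptotic classification, so that none escapes the count, and that the selection of $a_\e,b_\e$ in (i)--(iv) can be made simultaneously for all small $\e$ while keeping the components of $\{E_\e\le a_\e\}$ valleys and $\{E_\e\le b_\e\}$ connected. This is precisely where the uniformity built into Theorems \ref{valles}, \ref{recuentoo} and \ref{multiplicidad}, and the fact that $E_\e$ is a small perturbation of the fixed functional $E_0$, become indispensable. Once that uniformity is in hand, the inequality itself is a formal consequence of the additivity of $\Sigma$ and the two multiplicity counts.
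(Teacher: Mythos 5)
Your proposal is correct and follows essentially the same route as the paper: the paper's own proof counts the asymptotic limit behaviors component-by-component of $\Dv=0$ (lines contributing $x_i+1$ each, ovals contributing $y_j$ each, for a total of at most $M+N$), multiplies by the $(n-1)^2$ multiplicity factor from Theorem \ref{multiplicidad}, and adds $1$ via the Morse/valley bookkeeping. The only difference is presentational: you spell out explicitly the chain $\sharp\{E_\e\le a_\e\}=1-\Sigma(\{a_\e<E_\e\le b_\e\})$ and the bound $-\Sigma\le\C$, which the paper leaves implicit in its reliance on conditions (i)--(iv) and Section \ref{morseinn}.
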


\begin{proof}
From Theorem \ref{recuentoo}, we must
compute the maximum number of limit behaviors which are contained in the
components of the curve $\Dv=0$.

Let $L\ge0$ be the number of connected components of $\Dv=0$ homeomorphic to a straight line.
Assume that the connected component $i$, for 
$i\in \{0, 1,\ldots,L\},$
contains $x_i\ge0$ contact
points ($x_0=0$ always). Then it can have at most $x_i+1$ limit  behaviors. Therefore
the number of limit critical closed paths contained in the components
of $\Dv=0$ homeomorphic to a straight line is at most 
$L+\sum_{i=0}^L x_i.$

Let $O\ge0$ be the number of connected components of $\Dv=0$ homeomorphic to an oval. 
Suppose $y_j\ge0$ is the number of contact points in the $j$-th component, for 
$j\in \{0, 1,\ldots,O\}$.
Note that $y_0=0$ always. Then we can have at most 
$\sum_{j=0}^O y_j$ 
different limit  behaviors, all of which are bounded.

In summary, the number of limit critical closed paths contained in
the components of $\Dv=0$ is at most
\[
L+\sum_{i=0}^{L} x_i+\sum_{j=0}^{O} y_j\le M+N.
\]
By Theorem \ref{multiplicidad}, each such possible limit behavior must
be multiplied by the corresponding multiplicity factor $(n-1)^2$. Hence, we
will have at most 
$$
1+(n-1)^2(M+N)
$$ 
critical closed paths of $E_\e$ for every $\e$
sufficiently small.
\end{proof}

Note that in fact, the upper bound can be slightly improved to 
$$
1+(n-1)^2(L+N)
$$
where $L$ is the number of connected components of the curve $\Dv=0$ homeomorphic to a line. 

\subsection{Non-generic situation}\label{nongen}

In this final section we treat the case of polynomial differential
systems \eqref{e1} for which either the curve $\Dv=0$ has singular
points, i.e. the system
\begin{equation}\label{overdeteruno}
P_{xx}+Q_{yx}=P_{xy}+Q_{yy}=0,\quad P_x+Q_y=0
\end{equation}
has some solutions; our initial differential system \eqref{e1} has non-countable, infinitely many contact points,
i.e. system \eqref{salto0} has a continuum of solutions; or there are multiple solutions to the same system. Note that
equations \eqref{overdeteruno} and \eqref{salto0} involve the partial
derivatives of $\Dv$. Our argument revolves around the idea that vector fields $\bF$ for such systems can be uniformly approximated by a sequence $\bF_\delta$ of
non-singular polynomial vector fields without increasing the degree, and in
such a way that the divergence curve for $\bF_\delta$ has no
singularities, and finitely many simple contact points with system
\eqref{e1}.

We can definitely apply our previous results to the family of
functionals
\begin{align}
E_{\e, \delta}(\bu)=\int_0^1&\left[\frac12(\bF^\perp_\delta(\bu(t))\cdot\bu'(t))^2+\frac\e2(|\bu''(t)|^2+|\bu'(t)|^2+ |\bu(t)|^2)\right.\nonumber\\
&\left.+(\bu(t)\cdot\bv_{\delta}(t)+\bu'(t)\cdot\bv'_{\delta}(t)+\bu''(t)\cdot\bv''_{\delta}(t))\right.\nonumber\\
&\left.\frac1{2\e}(|\bv''_{\delta}(t)|^2+|\bv'_{\delta}(t)|^2+ |\bv_{\delta}(t)|^2)
\right]\,dt,\nonumber
\end{align}
where the dependence of the smooth paths $\bv_{\delta}$ on
$\delta$ is as regular as necessary. It is clear that the convergence
$$
E_{\e, \delta}\to E_\e\hbox{ as }\delta\to0,
$$ 
takes place in the Haussdorf sense since parameter $\delta$ is only involved in lower-order terms ($\e$ is fixed in this argument). 

Our first relevant observation is that the discussion in Subsection \ref{centralz} is valid regardless of whether our initial differential system \eqref{e1} is generic. This means that
$$
H(n)\le \lim_{a\to0}\lim_{\e\to0}\#(\{E_\e\le a\}\cap\bbO)
$$
where, as usual
$$
\#(\{E_\e\le a\}\cap\bbO)
$$ 
designates the number of connected components of the corresponding set. 
Because of the announced convergence $E_{\e, \delta}\to E_\e$, we
would also have the convergence
$$
\{E_{\e, \delta}\le a\}\to\{E_\e\le a\},\quad \delta\to0,
$$
in the Hausdorff distance for bounded sets. 
The operator $\#$, number of connected components of sub-level sets, is lower semicontinuous with respect to this convergence, and hence
$$
\#(\{E_\e\le a\}\cap\bbO)\le\lim_{\delta\to0}\#(\{E_{\e, \delta}\le a\}\cap\bbO).
$$
In this way, 
$$
H(n)\le \lim_{a\to0}\lim_{\e\to0}\#(\{E_\e\le a\}\cap\bbO)\le\lim_{a\to0}\lim_{\e\to0}\lim_{\delta\to0}\#(\{E_{\e, \delta}\le a\}\cap\bbO).
$$
If approximating polynomials $\bF_\delta$ are generic, all of our work implies that
$$
\lim_{a\to0}\lim_{\e\to0}\lim_{\delta\to0}\#(\{E_{\e, \delta}\le a\}\cap\bbO)\le C(n)
$$
where $C(n)$ is our upper bound for generic polynomial vector fields of degree $n$,
and we have the same bound in terms of the degree of the system for a non-generic differential vector field $\bF$. 

Notice that we are not claiming anything about the relationship
between limit cycles of $\bF$ and limit cycles of $\bF_\delta$, and their relative positions, or of
critical closed paths for $E_\e$ and of $E_{\e, \delta}$.

\bibliographystyle{amsplain}

\end{document}